\numberwithin{equation}{section}
\DeclareMathOperator{\Tr}{Tr}
\DeclareMathOperator{\OpW}{Op_\hbar^w}
\DeclareMathOperator{\supp}{supp}
\DeclareMathOperator{\Vol}{Vol}
\DeclareMathOperator{\dist}{dist}
\DeclareMathOperator{\re}{\mathrm{Re}}
\DeclareMathOperator{\im}{\mathrm{Im}}
\newcommand{\norm}[1]{\lVert #1 \rVert}
\newcommand{\abs}[1]{\left| #1 \right|}
\newcommand{\R}{\mathbb{R}}
\newcommand{\C}{\mathbb{C}}
\newcommand{\N}{\mathbb{N}}
\newcommand{\Z}{\mathbb{Z}}
\newtheorem{thm}{Theorem}[section]
\newtheorem{lemma}[thm]{Lemma}
\newtheorem{proposition}[thm]{Proposition}
\theoremstyle{definition}
\newtheorem{definition}[thm] {Definition}
\newtheorem{assumption}[thm]{Assumption}
\theoremstyle{remark}
\newtheorem{remark}[thm]{Remark}
\begin{document}

\title{Sharp semiclassical spectral asymptotics for local magnetic Schr\"odinger operators on $\R^d$ without full regularity.}

\author{S{\o}ren Mikkelsen}
   \affil{\small{Department of Mathematics and Statistics, University of Helsinki \\ Helsinki, Finland \\ email:~\texttt{soren.mikkelsen@helsinki.fi} }}

\maketitle

\begin{abstract}
We consider operators acting in $L^2(\R^d)$ with $d\geq3$ that locally behave as a magnetic Schr\"odinger operator. For the magnetic Schr\"odinger operators we suppose the magnetic potentials are smooth and the electric potential is five times differentiable and the fifth derivatives are H\"older continuous. Under these assumptions, we establish sharp spectral asymptotics for localised counting functions and Riesz means. 
\end{abstract}


\section{Introduction}
We will here consider sharp semiclassical spectral asymptotics for operators $\mathcal{H}_{\hbar,\mu}$ that locally are given by a magnetic Schr\"odinger operators acting in $L^2(\R^d)$ for $d\geq3$. What we precisely mean by ``locally given by'' will be clarified below.  That is we consider operators that locally are of the form
\begin{equation}\label{def_op_intro}
	H_{\hbar,\mu} = (-i\hbar\nabla - \mu a)^2  + V, 
\end{equation}
where $\hbar\in(0,1]$ is the semiclassical parameter, $\mu\geq0$ is the intensity of the magnetic field, $a$ is the magnetic vector potential and  $V$ is the electric potential. Our exact assumptions on the potentials and intensity $\mu$ will be stated below. We will here for $\gamma\in[0,1]$ be interested in the asymptotics as $\hbar$ goes to zero of the following traces
\begin{equation}\label{traces_to_consider}
	\Tr[\varphi g_\gamma(\mathcal{H}_{\hbar,\mu})],
\end{equation}
for $\gamma\in[0,1]$, where $\varphi\in C_0^\infty(\R^d)$. The function $g_\gamma$ is given by
\begin{equation}
	g_\gamma(t) = \begin{cases}
	\boldsymbol{1}_{(-\infty,0]}(t) &\gamma=0
	\\
	(t)_{-}^\gamma &\gamma\in(0,1],
	\end{cases}
\end{equation}
where  we have used the notation $(x)_{-} = \max(0,-x)$ and $\boldsymbol{1}_{(-\infty,0]}$ is the characteristic function for the set $(-\infty,0]$. To ensure that the leading order term in the asymptotics is independent of the magnetic field we will assume that $\hbar\mu\leq C$, where $C$ is some positive constant. These localised traces can be used to understand the global quantity 
\begin{equation}\label{traces_to_consider_global}
	\Tr[g_\gamma(H_{\hbar,\mu})].
\end{equation}
For the global quantity $\Tr[g_\gamma(H_{\hbar,1})]$, where the magnetic potential is independent of the semiclassical parameter, it was established by Helffer and Robert in \cite{MR724029} for the case $\gamma=0$ and \cite{MR1079775} for the case $\gamma\in(0,1]$ that
\begin{equation*}
	  \Big|\Tr[g_\gamma(H_{\hbar,1})] - \frac{1}{(2\pi\hbar)^d} \int_{\R^{2d}}g_\gamma(p^2+V(x)) \,dx dp \Big| \leq C \hbar^{1+\gamma-d}.
\end{equation*}
for $\hbar$ sufficiently small. They established these results under the assumption that both the electric potential and the magnetic vector potential are smooth functions plus some additional regularity assumptions. Their result is sharp in the sense that the error is the best one can obtain without more assumptions. This can be seen by explicitly diagonalising the Harmonic oscillator. For the case, where the operator $\mathcal{H}_{\hbar,\mu}$ is locally given by a magnetic Schr\"odinger operator with smooth electric and the magnetic vector potentials it was established by Sobolev in \cite{MR1343781} that
\begin{equation*}
	  \Big|\Tr[\varphi g_\gamma(\mathcal{H}_{\hbar,\mu})] - \frac{1}{(2\pi\hbar)^d} \int_{\R^{2d}}g_\gamma(p^2+V(x)) \varphi(x) \,dx dp \Big| \leq C \langle \mu \rangle^{1+\gamma} \hbar^{1+\gamma-d}
\end{equation*}
for $\hbar$ sufficiently small, where the strength of the magnetic field $\mu$ satisfies that $\hbar\mu\leq C$ for some constant $C>0$. What we precisely mean by ``locally given by'' will again be clarified below. The aim of this paper is to obtain these sharp bounds without assuming the electric potential is smooth. This is not the first work to consider sharp spectral asymptotic without full regularity and we will comment on the literature in Section~\ref{SEC:previous_work}.

Spectral asymptotics of the quantities $\Tr[\varphi g_\gamma(\mathcal{H}_{\hbar,\mu})] $ and $\Tr[g_\gamma(H_{\hbar,\mu})] $ are not just of mathematical interest, they are also of physical interest.
Especially the case $\gamma=1$ has physical motivation both with and without a magnetic vector potential. For details see e.g. \cite{MR1272387,MR1266071,PhysRevLett.69.749,MR1181242,MR428944,MR2013804}. The case $\gamma=0$ is also of interest. Recently in \cite{MR4182014} sharp estimates for the trace norm of commutators between spectral projections and position and momentum operators were obtained using asymptotic for \eqref{traces_to_consider} for $\gamma=0$. This type of bound first appeared as an assumption in \cite{MR3248060}, where the mean-field evolution of fermionic systems was studied. The assumption has also appeared in  \cite{MR3570479,MR3202863,MR3381147,MR3461406,MR4009687,MR4602009}.  
The asymptotics used in \cite{MR4182014} were obtained in \cite{MR1343781}.

\subsection{The main results}

Before we state our main result we will specify our assumptions on the operator $\mathcal{H}_{\hbar,\mu}$ and what we mean by ``locally given by a magnetic Schr\"odinger operator''. That we only locally assume $\mathcal{H}_{\hbar,\mu}$ is acting as a magnetic Schr\"odinger operator is due to the presence of the cut-off function. This type of assumption first appeared in  \cite{MR1343781} to the knowledge of the author. Our exact assumptions are given below.
\begin{assumption}\label{Assumption:local_potential_1}
 Let $\mathcal{H}_{\hbar,\mu}$ be an operator acting in $L^2(\R^d)$, where $\hbar>0$ and $\mu\geq0$. Moreover, let $\gamma\in[0,1]$. Suppose that
\begin{enumerate}[label={$\roman*)$}]
  \item\label{G.L.1.1}  $\mathcal{H}_{\hbar,\mu}$ is self-adjoint and lower semibounded.
  \item\label{G.L.1.2}  Suppose there exists an open set $\Omega\subset\R^d$ and real valued functions $V\in C^{5,\kappa}_0(\R^d)$ with $\kappa>\gamma$, $a_j\in C_0^\infty(\R^d)$ for $j\in\{1,\dots,d\}$ such that $C_0^\infty(\Omega)\subset\mathcal{D}(\mathcal{H})$ and
\begin{equation*}
	\mathcal{H}_{\hbar,\mu} \varphi = H_{\hbar,\mu}\varphi \quad\text{for all $\varphi\in C_0^\infty(\Omega)$},
\end{equation*}
where $H_{\hbar,\mu}= (-i\hbar\nabla - \mu a)^2  + V$.
  \end{enumerate}
\end{assumption}
In the assumption, we have used the notation $C_0^{5,\kappa}(\R^d)$. This is the space of compactly supported functions that are five times differentiable and the fifth derivatives are uniformly H\"older continuous with parameter $\kappa$. That is for $f\in C_0^{5,\kappa}(\R^d)$ there exists a constant $C>0$ such that for all $x,y\in\R^d$ it holds that
  \begin{equation}\label{EQ:Holder_def}
  	\begin{aligned}
	 |\partial_x^{\alpha} f(x) - \partial_x^{\alpha} f(y) |  \leq C |x-y|^{\kappa} \quad\text{for all $\alpha \in\N_0^d$  with  $\abs{\alpha}=5$.}
    \end{aligned}
  \end{equation}
Note that we here and in the following are using the convention that $\N$ does not contain $0$ and we will use the notation $\N_0 = \N\cup\{0\}$. Moreover for the cases where $\kappa>1$ we use the convention that  
\begin{equation}
	C_0^{5,\kappa}(\R^d) \coloneqq C_0^{5 + \lfloor \kappa \rfloor, \kappa-\lfloor \kappa \rfloor}(\R^d),
\end{equation}
where $C_0^{k,\kappa}(\R^d)$ is the space of compactly supported functions that are $k$ times differentiable and the $k$'ed derivatives are uniformly H\"older continuous with parameter $\kappa$. We will in what follows denote the constant $C$ in \eqref{EQ:Holder_def} for the H\"older constant for $f$. The assumptions we make on the operator $\mathcal{H}_{\hbar,\mu}$ are very similar to the assumptions made in \cite{MR1343781}.
The difference is that we do not require $V$ to be smooth. Instead, we assume it has five derivatives and that the fifth derivative is uniformly H\"older continuous.  
With this assumption in place, we can state our main result.
\begin{thm}\label{THM:Local_five_derivative}
Let $\mathcal{H}_{\hbar,\mu}$ be an operator acting in $L^2(\R^d)$ and let $\gamma\in[0,1]$. If $\gamma=0$ we assume $d\geq3$ and if $\gamma\in(0,1]$ we assume $d\geq4$. Suppose that $\mathcal{H}_{\hbar,\mu}$ satisfies Assumption~\ref{Assumption:local_potential_1} with the set $\Omega$ and the functions $V$ and $a_j$ for $j\in\{1,\dots,d\}$. 
Then for any $\varphi\in C_0^\infty(\Omega)$ it holds that
\begin{equation*}
	  \Big|\Tr[\varphi g_\gamma(\mathcal{H}_{\hbar,\mu})] - \frac{1}{(2\pi\hbar)^d} \int_{\R^{2d}}g_\gamma(p^2+V(x)) \varphi(x) \,dx dp \Big| \leq C \langle \mu \rangle^{1+\gamma} \hbar^{1+\gamma-d}
\end{equation*}
for all $\hbar\in(0,\hbar_0]$ and $\mu\leq C\hbar^{-1}$, where $\hbar_0$ is sufficiently small. With the notation $\langle \mu \rangle=(1+\mu^2)^{\frac12}$. The constant $C$ depends on the dimension $d$, the numbers $\gamma$, $\norm{\varphi}_{L^\infty(\R^d)}$, $\norm{\partial^\alpha_x  \varphi}_{L^\infty(\R^d)}$ and  $\norm{\partial^\alpha a_j}_{L^\infty(\R^d)}$ for all $\alpha\in\N_0^d$ with $|\alpha|\geq1$ and $j\in\{1\dots,d\}$, $\norm{\partial_x^\alpha V}_{L^\infty(\R^d)}$ for all $\alpha\in\N_0^d$ such that $|\alpha|\leq 5$ and the H\"older constant for $V$.
\end{thm}
\begin{remark}
We remark that the error term is independent of $\norm{ a_j}_{L^\infty(\R^d)}$ for all $j\in\{1\dots,d\}$. This is also the case for the results in  \cite{MR1343781}. As remarked in \cite{MR1343781} it is not surprising as the magnitude of $a_j$ can easily be changed by a Gauge transform.

In this result we allow the strength of the magnetic field to depend on the semiclassical parameter. As expected we observe that as the strength of the magnetic field increases the ``worse'' the error becomes. When $\mu=C\hbar^{-1}$ we can also see that the error is of the same order as our leading order term. Hence in this regime, the leading order term has to be corrected to obtain an error of lower order. The leading order term for the energy (the case $\gamma=1$) taking into account the magnetic field was first rigorously derived by Lieb, Solovej, and Yngvarson in \cite{MR1266071} see also \cite{MR4529872}. For two terms asymptotics with the corrected leading order term see \cite{MR1425276} and \cite[Vol III and IV]{ivrii2019microlocal1}     
 
It would have been desirable not to assume the magnetic vector potential to be smooth. However, with the techniques we use here, this is not possible. See Remark~\ref{RE:approximations} part~\ref{RE:approximations_2} for further details.  
\end{remark}
The assumptions on the dimension are needed to ensure that certain integrals will be convergent independent of $\hbar$. The next result will consider the cases of dimensions $2$ and $3$. However, these will not be sharp except in the case $d=3$ and $\gamma=0$. 
The following result is proven using almost analogous arguments to those used to prove Theorem~\ref{THM:Local_five_derivative}. We will not give separate full proofs of this result. Instead, we will in Remark~\ref{RE:proof_non_sharp_thm} describe how to alter the proof of the main result to obtain these.
\begin{thm}\label{THM:Main_non_sharp}
Let $\mathcal{H}_{\hbar,\mu}$ be an operator acting in $L^2(\R^d)$, with $d=2$ or $d=3$ and let $\gamma\in[0,1]$. Suppose that $\mathcal{H}_{\hbar,\mu}$ satisfies Assumption~\ref{Assumption:local_potential_1} with the set $\Omega$ and the functions $V$ and $a_j$ for $j\in\{1,\dots,d\}$. 
Then for any $\varphi\in C_0^\infty(\Omega)$ it holds that
\begin{equation*}
	  \Big|\Tr[\varphi g_\gamma(\mathcal{H}_{\hbar,\mu})] - \frac{1}{(2\pi\hbar)^d} \int_{\R^{2d}}g_\gamma(p^2+V(x)) \varphi(x) \,dx dp \Big| \leq C \langle \mu \rangle^{\frac{d+2\gamma}{3}} \hbar^{\frac{2}{3}(\gamma-d)}
\end{equation*}
for all $\hbar\in(0,\hbar_0]$ and $\mu\leq C\hbar^{-1}$, where $\hbar_0$ is sufficiently small. With the notation $\langle \mu \rangle=(1+\mu^2)^{\frac12}$. The constant $C$ depends on the dimension $d$, the numbers $\gamma$, $\norm{\varphi}_{L^\infty(\R^d)}$, $\norm{\partial^\alpha_x  \varphi}_{L^\infty(\R^d)}$ and  $\norm{\partial^\alpha a_j}_{L^\infty(\R^d)}$ for all $\alpha\in\N_0^d$ with $|\alpha|\geq1$ and $j\in\{1\dots,d\}$, $\norm{\partial_x^\alpha V}_{L^\infty(\R^d)}$ for all $\alpha\in\N_0^d$ such that $|\alpha|\leq 5$ and the H\"older constant for $V$.
\end{thm}
One thing to note is that in the case $d=3$ and $\gamma=0$ this result still gives us a sharp estimate. For the remaining cases, this result does not yield sharp bounds in terms of the semiclassical parameter. For the case $d=3$ and $\gamma=1$ we get the error that is $\langle \mu \rangle^{\frac{5}{3}} \hbar^{-\frac{4}{3}}$ in terms of the semiclassical parameter and the strength of the magnetic field. For the sharp bound, we would expect an error of the form $\langle \mu \rangle^{2} \hbar^{-1}$.  

\subsection{Previous results and outline of the paper}\label{SEC:previous_work}

The first sharp results on spectral asymptotics were, as mentioned, obtained by Helffer and Robert in \cite{MR724029,MR1079775}. They considered general $\hbar$-pseudo-differential operators that include magnetic Schr\"odinger operators with the intensity of the magnetic field that is independent of the semiclassical parameter. Sharp spectral asymptotics for operators satisfying Assumption~\ref{Assumption:local_potential_1} with $V\in C_0^\infty(\R^d)$ was established by Sobolev in \cite{MR1343781}. In \cite{MR1412359} sharp asymptotics were also obtained however, the electric potential was allowed to be singular at the origin but otherwise smooth. 

We will not give a full review of the literature on sharp spectral asymptotics without full regularity but focus on the literature also considering magnetic Schr\"odinger operators. For a more historical review of the literature we refer the reader to the introduction of \cite{MR4689394}. 

In \cite{MR1974450} Bronstein and Ivrii consider differential operators on $L^2(M)$, where $M$ is a compact manifold without a boundary. They assume the coefficients are differentiable and the first derivatives are continuous with continuity modulus $\mathcal{O}( \log |x-y|^{-1})$. Under a non-critical condition\footnote{In the paper this condition is not denoted non-critical but microhyperbolic.} they establish sharp spectral asymptotics for the counting function. that is the case $\gamma=0$. It is mentioned in a remark that their results of the paper extend to $\gamma\in(0,1]$ if the techniques used in the paper are combined with techniques from \cite{MR1631419}. Their result includes magnetic Schr\"odinger operators with the intensity of the magnetic field that is independent of the semiclassical parameter. The non-critical condition for a magnetic Schr\"odinger operator is equivalent to assuming that
\begin{equation*}
	|V(x)| \geq c >0, \qquad\text{for all $x\in M$}. 
\end{equation*}
We cannot have that this assumption is verified and have only pure point spectrum on the negative half axis when the underlying space is non-compact. So these results do not immediately generalise to the non-compact setting. In \cite{MR1974451} Ivrii considers the same general setting as in \cite{MR1974450} and only the case $\gamma=0$ is considered. The manifold is still assumed to be compact but is allowed to have a boundary. Moreover, the non-critical assumption is removed. The techniques used to remove the non-critical condition are based on multi-scale analysis and we also will use these techniques here. In \cite{MR2335576} Zielinski obtained sharp spectral asymptotics for the counting function for differential operators with non-smooth coefficients on $L^2(\R^d)$ with $d\geq3$ without a non-critical condition but with a geometric condition on the semiclassical principal symbol. The coefficients are in this work assumed to be bounded and twice differentiable with the second derivatives being H\"older continuous with some positive parameter. The geometric condition on the semiclassical principal symbol $a_0$ is that for some $\varepsilon>0$ there exists a constant $C>0$ such that
\begin{equation*}
	\sup_{E\in [-\hbar^{1-\varepsilon},\hbar^{1-\varepsilon}]} \Vol \big\{ (x,p)\in\R^{2d} \, \big| \, |a_0(x,p) - E| \leq \hbar \big\} \leq C\hbar.
\end{equation*} 
Again these results include magnetic Schr\"odinger operators with intensity of the magnetic field that is independent of the semiclassical parameter.
In \cite{MR2179891} Ivrii considers magnetic Schr\"odinger operators. In this work, non-smooth electric and magnetic potentials are considered, and sharp spectral asymptotics are obtained.  In this paper, the strength of the magnetic field is allowed to depend on the semiclassical parameter.
These results are also given in \cite[Vol IV]{ivrii2019microlocal1}. In some cases, the results presented in  \cite{MR2179891} and \cite[Vol IV]{ivrii2019microlocal1} require less smoothness than here. The results in \cite{MR1974450,MR1974451} can also be found in \cite[Vol I]{ivrii2019microlocal1}.    

Most of the mentioned results require less smoothness than the results presented here. The main difference is that our starting point is a trace that is localised that is we consider $\Tr[\varphi g_\gamma(\mathcal{H}_{\hbar,\mu})]$ and not $\Tr[g_\gamma(\mathcal{H}_{\hbar,\mu})]$. This localisation gives rise to some difficulties. Usually, when you want to prove sharp spectral asymptotics for a differential operator $A(\hbar)$  with non-smooth coefficients you compare quadratic forms. See e.g. \cite{MR1974450,MR2179891,MR1631419,MR1974451,MR2105486,MR4689394,MR2335576}. This is due to the observation that if you have an operator $A(\hbar)$ and two approximating or framing operators $A^{\pm}(\hbar)$ such that
  \begin{equation*}
    A^{-}(\hbar) \leq A(\hbar)  \leq A^{+}(\hbar)
  \end{equation*}
 in the sense of quadratic forms. Then by the min-max-theorem, we obtain the relation
  \begin{equation}\label{EQ:com_quad_tr}
    \Tr[g_\gamma(A^{+}(\hbar))] \leq \Tr[g_\gamma(A(\hbar)) ] \leq  \Tr[g_\gamma(A^{-}(\hbar))].
  \end{equation} 
The aim is then to choose the approximating operators such that sharp asymptotics can be obtained for these and then use \eqref{EQ:com_quad_tr} to deduce it for the original operator $A(\hbar)$. In the situation we consider, we have a localisation in the trace. This implies that we cannot get a relation like \eqref{EQ:com_quad_tr} from the min-max theorem. We will instead estimate the difference directly and prove that the traces of our original problem are sufficiently close to the trace when we have inserted the approximation.  
In Section~\ref{sec:Pre} we specify the notation we use and describe the operators we will be working with. Moreover, we recall some definitions and results that we will need later. At the end of the section, we describe how we approximate the non-smooth potential by a smooth potential. 

In Section~\ref{sec:Rough_pseudo_diff_op} we recall some results and definitions on rough $\hbar$-pseudo-differential operators. We also prove some specific results for rough Schr\"odinger operators. 

In Section~\ref{sec:Aux_est} we establish several estimates for operator satisfying Assumption~\ref{Assumption:local_potential_1}. The ideas and techniques used here are inspired by the ideas and techniques used in  \cite{MR1343781}. Some of the results will also be taken directly from \cite{MR1343781}. These auxiliary results are needed to prove a version of the main theorem under an additional non-critical condition. This version is proven in Section~\ref{sec:model_prob}. Finally in Section~\ref{sec:proof_main} we give the proof of the main theorem in two steps. First in the case where $\mu\leq\mu_0<1$ and then the general case. 
\subsection*{Acknowledgement}
The author is grateful to the Leverhulme Trust for their support via Research Project Grant 2020-037. The author is also grateful to the anonymous referees for carefully reading the manuscript and providing helpful remarks and comments that have helped improve the manuscript.
\section{Preliminaries}\label{sec:Pre}
We start by specifying some notation. For an open set $\Omega\subseteq\R^d$ we will in the following by $\mathcal{B}^\infty(\Omega)$ denote the space
\begin{equation*}
 	\mathcal{B}^\infty(\Omega) \coloneqq \big\{ \psi\in C^\infty(\Omega) \, \big| \, \norm{\partial^\alpha \psi}_{L^\infty(\Omega) }<\infty \, \forall \alpha\in\N_0^d  \big\}.
\end{equation*}
We will for an operator $A$ acting in a Hilbert space $\mathscr{H}$ denote the operator norm by $\norm{A}_{\mathrm{op}}$ and the trace norm by $\norm{A}_1$.

Next, we describe the operators we will be working with. If we have $a_j\in L^2_{loc}(\R^d)$ for all $j\in\{1,\dots,d\}$ then we can consider the following form
\begin{equation}
	\mathfrak{h}_0[f,g] = \sum_{j=1}^d \int_{\R^d} (-i\hbar \partial_{x_j} -\mu a_j(x))f(x) \overline{(-i\hbar \partial_{x_j} -\mu a_j(x))g(x)} \,dx \quad f,g\in \mathcal{D}[\mathfrak{h}_0]
\end{equation}
for $\mu\geq0$ and $\hbar>0$, where $\mathcal{D}[\mathfrak{h}_0]$ is the domain for the form. Note that $C_0^\infty(\R^d)\subset \mathcal{D}[\mathfrak{h}_0]$. Moreover, this form is closable and lower semibounded (by zero) see \cite{MR526289} for details. Hence there exists a positive self-adjoint operator associated with the form (the Friederichs extension). For details see e.g. \cite{MR0493420} or \cite{MR526289}. We will by $\mathcal{Q}_j$ denote the square root of this operator.  When we also have a potential $V\in L^\infty(\R^d)$ we can define the operator $H_{\hbar,\mu}$ as the Friederichs extension of the quadratic form 
\begin{equation}
	\mathfrak{h}[f,g] =  \int_{\R^d} \sum_{j=1}^d(-i\hbar \partial_{x_j} -\mu a_j(x))g(x) \overline{(-i\hbar \partial_{x_j} -\mu a_j(x))g(x)} + V(x)f(x)\overline{g(x)} \,dx \quad f,g\in \mathcal{D}[\mathfrak{h}]
\end{equation}
for $\mu\geq0$ and $\hbar>0$, where $\mathcal{D}[\mathfrak{h}]$ is the domain for the form. This construction gives us that $H_{\hbar,\mu}$ is self-adjoint and lower semibounded. Again for details see e.g. \cite{MR0493420} or \cite{MR526289}.

When working with the Fourier transform we will use the following semiclassical
version for $\hbar>0$
\begin{equation*}
  \mathcal{F}_\hbar[ \varphi ](p) \coloneqq \int_{\R^d} e^{-i\hbar^{-1} \langle x,p \rangle}\varphi(x) \, dx,
\end{equation*}
and with inverse given by
\begin{equation*}
  \mathcal{F}_\hbar^{-1}[\psi] (x) \coloneqq \frac{1}{(2\pi\hbar)^d} \int_{\R^d} e^{i\hbar^{-1} \langle x,p \rangle}\psi(p) \, dp,
\end{equation*}
where $\varphi$ and $\psi$ are elements of $\mathcal{S}(\R^d)$. Here $\mathcal{S}(\R^d)$ denotes the Schwartz space.

We will for some of the results see that they are true for a larger class of functions containing $g_\gamma$. These classes were first defined in \cite{MR1343781,MR1272980} and we recall the definition here. 
\begin{definition}
A function $g\in C^\infty(\R\setminus\{0\})$ is said to belong to the class $C^{\infty,\gamma}(\R)$, $\gamma\in[0,1]$, if $g\in C(\R)$ for $\gamma>0$, for some constants $C >0$ and $r>0$ it holds that
\begin{equation*}
	\begin{aligned}
	g(t) &= 0, \qquad\text{for all $t\geq C$}
	\\
	|\partial_t^m g(t)| &\leq C_m |t|^r, \qquad\text{for all $m\in\N_0$ and $t\leq -C$}
	\\
	|\partial_t^m g(t)| &\leq
	\begin{cases} 
		C_m  & \text{if $\gamma=0,1$} \\  
		C_m|t|^{\gamma-m} &\text{if $\gamma\in(0,1)$} 
	\end{cases}, 
	\qquad\text{for all $m\in\N$ and $t\in [ -C,C]\setminus\{0\}$}.
	\end{aligned}
\end{equation*}
A function $g$ is said to belong to $C^{\infty,\gamma}_0(\R)$ if $g\in C^{\infty,\gamma}(\R)$ and $g$ has compact support.
\end{definition}

We will in our analysis need different ways for expressing functions of self-adjoint operators. One of these is the Helffer-Sj\"ostrand formula. Before we state it we will recall a definition of an almost analytic extension. 
\begin{definition}\label{Def:almost_analytic_function}[Almost analytic extension]
For $f\in C_0^\infty(\R)$ we call a function $\tilde{f} \in C_0^\infty(\C)$ an almost analytic extension if it has the properties 
\begin{equation*}
	\begin{aligned}
	|\bar{\partial} \tilde{f}(z)| &\leq C_n |\im(z)|^n, \qquad \text{for all $n\in\N_0$}
	\\
	\tilde{f}(t)&=f(t) \qquad \text{for all $t\in\R$},
	\end{aligned}
\end{equation*}
where $\bar{\partial} = \frac12 (\partial_x +i\partial_y)$.
\end{definition}
For how to construct the almost analytic extension for a given $f\in  C_0^\infty(\R)$ see e.g. \cite{MR2952218,MR1735654}. The following theorem is a simplified version of a theorem in \cite{MR1349825}.
\begin{thm}[The Helffer-Sj\"{o}strand formula]\label{THM:Helffer-Sjostrand}
  Let $H$ be a self-adjoint operator acting on a Hilbert space $\mathscr{H}$ and $f$ a function from $C_0^\infty(\R)$. Then the bounded operator $f(H)$ is given by the equation
  \begin{equation*}
    f(H) =- \frac{1}{\pi} \int_\C   \bar{\partial }\tilde{f}(z) (z-H)^{-1} \, L(dz),
  \end{equation*}
  where $L(dz)=dxdy$ is the Lebesgue measure on $\C$ and $\tilde{f}$ is an almost analytic extension of $f$.
\end{thm}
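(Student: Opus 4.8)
\emph{Proof strategy.} The plan is to reduce the operator identity to a scalar identity by means of the spectral theorem, and to prove the scalar identity as a version of the Cauchy--Pompeiu formula applied to the almost analytic extension $\tilde{f}$.

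First I would check that the right-hand side is a well-defined element of the space of bounded operators on $\mathscr{H}$. The map $z\mapsto \bar{\partial}\tilde{f}(z)(z-H)^{-1}$ is norm-continuous on $\C\setminus\R$ and vanishes off the compact set $\supp\tilde{f}$; combining the resolvent bound $\norm{(z-H)^{-1}}_{\mathrm{op}}\le|\im(z)|^{-1}$, valid for self-adjoint $H$ and $z\notin\R$, with the first-order almost-analyticity estimate $|\bar{\partial}\tilde{f}(z)|\le C_1|\im(z)|$, the integrand has operator norm at most $C_1$ on its (bounded) support and is therefore Bochner integrable. In particular the operator $-\pi^{-1}\int_\C \bar{\partial}\tilde{f}(z)(z-H)^{-1}\,L(dz)$ exists; note also that two almost analytic extensions of $f$ differ by a function whose $\bar{\partial}$ vanishes to infinite order on $\R$, so once the identity is established it will automatically show the integral does not depend on the choice of $\tilde{f}$.

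Next I would prove the scalar identity: for every $\lambda\in\R$,
\[
 f(\lambda) \;=\; -\frac{1}{\pi}\int_\C \frac{\bar{\partial}\tilde{f}(z)}{z-\lambda}\,L(dz).
\]
Since $\tilde{f}(\lambda)=f(\lambda)$ on $\R$, this is the Cauchy--Pompeiu formula for $\tilde{f}$ at the point $\lambda$. To obtain it, fix $\lambda$, choose $R$ large enough that $\tilde{f}$ vanishes on $\{|z-\lambda|\ge R\}$, and apply Stokes' theorem to the $1$-form $g(z)\,dz$ with $g(z)=\tilde{f}(z)/(z-\lambda)$ on the annular region $\Omega_\varepsilon=\{\,\varepsilon<|z-\lambda|<R\,\}$, using the identity $d\big(g(z)\,dz\big)=2i\,\bar{\partial}g(z)\,L(dz)$. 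On $\Omega_\varepsilon$ one has $\bar{\partial}g(z)=\bar{\partial}\tilde{f}(z)/(z-\lambda)$ because $1/(z-\lambda)$ is holomorphic there; the outer circle contributes nothing since $\tilde{f}$ vanishes near it, and the inner circle $|z-\lambda|=\varepsilon$ contributes $-2\pi i\,\tilde{f}(\lambda)$ in the limit $\varepsilon\to 0$ by a direct parametrization. Since $\bar{\partial}\tilde{f}$ is bounded with compact support and $|z-\lambda|^{-1}$ is locally integrable in $\R^2$, dominated convergence lets the integral over $\Omega_\varepsilon$ pass to the integral over $\C$ as $\varepsilon\to 0$, which yields the stated formula.

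Finally I would combine the two ingredients. Writing $f(H)=\int_\R f(\lambda)\,dE(\lambda)$ via the spectral theorem and inserting the scalar formula, for $\psi\in\mathscr{H}$ with (finite) spectral measure $\mu_\psi=d\scp{\psi}{E(\cdot)\psi}$ we get
\[
 \scp{\psi}{f(H)\psi} \;=\; \int_\R\Big(-\frac{1}{\pi}\int_\C \frac{\bar{\partial}\tilde{f}(z)}{z-\lambda}\,L(dz)\Big)\,d\mu_\psi(\lambda).
\]
For real $\lambda$ and $z\in\supp\tilde{f}$ one has $|z-\lambda|\ge|\im(z)|$, so $|\bar{\partial}\tilde{f}(z)|/|z-\lambda|\le C_1$; as $\supp\tilde{f}$ is compact and $\mu_\psi$ is finite, Fubini's theorem applies, and interchanging the integrals together with $\int_\R(z-\lambda)^{-1}\,d\mu_\psi(\lambda)=\scp{\psi}{(z-H)^{-1}\psi}$ gives the asserted identity weakly. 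By polarization it holds for all matrix elements, and since the right-hand side is a genuine Bochner integral in the bounded operators (first step), the operator identity follows. The step requiring the most care is keeping every integral absolutely convergent up to the real axis; this is exactly what the almost-analyticity bounds $|\bar{\partial}\tilde{f}(z)|\le C_n|\im(z)|^n$ are for (the first-order bound already suffices), while the remaining ingredients — Stokes' theorem and the spectral theorem — are standard.
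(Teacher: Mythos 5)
Your proposal is correct and is precisely the standard proof of the Helffer--Sj\"ostrand formula (well-definedness of the Bochner integral from the resolvent bound and the first-order almost-analyticity estimate, the scalar Cauchy--Pompeiu identity via Stokes' theorem on an annulus, then the spectral theorem and Fubini); the paper does not prove this theorem but cites it from the literature, where the argument given there is essentially the one you wrote. The only point worth flagging is orientation bookkeeping on the inner circle and the factor $d\bar z\wedge dz=2i\,dx\wedge dy$, both of which you handle correctly.
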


\subsection{Approximation of the potential}
In our analysis, we will need to approximate the potential with a smooth potential. How we choose this approximation is the content of the next lemma.  
\begin{lemma}\label{LE:rough_potential_local}
Let $V\in C_0^{k,\kappa}(\R^d)$ be  real valued, where $k\in\N_0$ and $\kappa\in[0,1]$. Then for all $\varepsilon >0$ there exists a rough potential $V_\varepsilon \in C_0^{\infty}(\R^d)$ such that
\begin{equation}\label{EQ:rough_potential_local}
	\begin{aligned}
	 \big| \partial_x^\alpha V(x) -  \partial_x^\alpha V_\varepsilon(x)\big| \leq C_\alpha \varepsilon^{k+\kappa - |\alpha|} \quad\text{for all $\alpha\in\N_0^d$ such that $|\alpha|\leq k$}
	 \\
	  \big| \partial_x^\alpha V_\varepsilon(x)\big| \leq C_\alpha \varepsilon^{k+\kappa - |\alpha|} \quad\text{for all $\alpha\in\N_0^d$ such that $|\alpha|> k$},
	 \end{aligned}
\end{equation}
where the constants $C_\alpha$ are independent of $\varepsilon$ but depend on $\norm{\partial^\beta V}_{L^\infty(\R^d)}$ for $\beta\in\N_0^d$ with $|\beta|\leq \min(|\alpha|,k)$  and the H\"older constant for $V$. Moreover, if for some open set $\Omega$ and a constant $c>0$ it holds that
\begin{equation*}
	|V(x)| +\hbar^{\frac{2}{3}} \geq c \qquad\text{for all $x\in\Omega$}.
\end{equation*}
Then there exists a constant $\tilde{c}$ such that for all $\varepsilon$ sufficiently small it holds that
\begin{equation*}
	|V_\varepsilon(x)| +\hbar^{\frac{2}{3}} \geq \tilde{c} \qquad\text{for all $x\in \Omega$}.
\end{equation*}
\end{lemma}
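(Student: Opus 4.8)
The plan is to construct $V_\varepsilon$ by mollification, $V_\varepsilon = V * \chi_\varepsilon$, where $\chi_\varepsilon(x) = \varepsilon^{-d}\chi(x/\varepsilon)$ for a fixed $\chi \in C_0^\infty(\R^d)$ with $\int \chi = 1$, say $\chi \geq 0$ supported in the unit ball. Since $V$ has compact support, so does $V_\varepsilon$ (with support enlarged by $\varepsilon$), and $V_\varepsilon \in C_0^\infty(\R^d)$ by standard properties of convolution. This is the natural choice and the bulk of the work is verifying the two families of estimates in \eqref{EQ:rough_potential_local}.

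\medskip
\noindent\textbf{Low-order derivatives ($|\alpha| \leq k$).} First I would reduce to a pointwise estimate. For $|\alpha| \leq k$ we can move all derivatives onto $V$: $\partial_x^\alpha V_\varepsilon = (\partial_x^\alpha V) * \chi_\varepsilon$. Then
\[
\partial_x^\alpha V(x) - \partial_x^\alpha V_\varepsilon(x) = \int_{\R^d} \big(\partial_x^\alpha V(x) - \partial_x^\alpha V(x-y)\big)\chi_\varepsilon(y)\,dy,
\]
and the integration is over $|y| \leq \varepsilon$. If $|\alpha| = k$, Hölder continuity of $\partial^\alpha V$ gives $|\partial_x^\alpha V(x) - \partial_x^\alpha V(x-y)| \leq C|y|^\kappa \leq C\varepsilon^\kappa = C\varepsilon^{k+\kappa-|\alpha|}$. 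If $|\alpha| < k$, I would Taylor expand $\partial_x^\alpha V(x-y)$ around $x$ to order $k - |\alpha|$; the terms of order $1,\dots,k-|\alpha|-1$ integrate to zero only if $\chi$ is chosen with vanishing moments, so instead it is cleaner to estimate crudely: $|\partial_x^\alpha V(x) - \partial_x^\alpha V(x-y)| \leq C|y| \leq C\varepsilon \leq C\varepsilon^{k+\kappa-|\alpha|}$ provided $\varepsilon \leq 1$ (using $k + \kappa - |\alpha| \geq 1$ when $|\alpha| < k$, since $\kappa \geq 0$). The constants depend only on $\|\partial^\beta V\|_{L^\infty}$ with $|\beta| \leq |\alpha|$ (via the gradient bound) or $|\beta| = k$ (via the Hölder seminorm, which is controlled by $\|\partial^\beta V\|_\infty$ for $|\beta| \leq k$ together with the Hölder constant — one should track that the Hölder seminorm of $\partial^\alpha V$ is among the data), matching the claimed dependence with $|\beta| \leq \min(|\alpha|, k)$.

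\medskip
\noindent\textbf{High-order derivatives ($|\alpha| > k$).} Write $\alpha = \beta + \gamma$ with $|\beta| = k$ and $|\gamma| = |\alpha| - k \geq 1$. Then $\partial_x^\alpha V_\varepsilon = (\partial^\beta V) * (\partial^\gamma \chi_\varepsilon)$, and $\partial^\gamma \chi_\varepsilon(y) = \varepsilon^{-d-|\gamma|}(\partial^\gamma\chi)(y/\varepsilon)$. Crucially, since $|\gamma| \geq 1$, $\int \partial^\gamma \chi_\varepsilon = 0$, so
\[
\partial_x^\alpha V_\varepsilon(x) = \int_{\R^d} \big(\partial^\beta V(x-y) - \partial^\beta V(x)\big)\partial^\gamma\chi_\varepsilon(y)\,dy,
\]
and bounding $|\partial^\beta V(x-y) - \partial^\beta V(x)| \leq C|y|^\kappa$ by Hölder continuity while $\int |y|^\kappa |\partial^\gamma\chi_\varepsilon(y)|\,dy = C\varepsilon^{\kappa - |\gamma|} = C\varepsilon^{k + \kappa - |\alpha|}$ (by scaling) gives the desired bound. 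Again the constant depends only on the Hölder data of $\partial^\beta V$, $|\beta| = k$, i.e.\ on $\|\partial^\beta V\|_\infty$ for $|\beta| \leq k$, consistent with $\min(|\alpha|,k) = k$.

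\medskip
\noindent\textbf{The lower-bound (non-critical) statement.} Suppose $|V(x)| + \hbar^{2/3} \geq c$ on $\Omega$. By the case $\alpha = 0$ of \eqref{EQ:rough_potential_local}, $\|V - V_\varepsilon\|_{L^\infty} \leq C_0\varepsilon^{k+\kappa}$. Hence for $x \in \Omega$,
\[
|V_\varepsilon(x)| + \hbar^{2/3} \geq |V(x)| - |V(x) - V_\varepsilon(x)| + \hbar^{2/3} \geq c - C_0\varepsilon^{k+\kappa},
\]
so choosing $\varepsilon$ small enough that $C_0\varepsilon^{k+\kappa} \leq c/2$ yields the claim with $\tilde c = c/2$. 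Note this requires $k + \kappa > 0$, i.e.\ that $V$ has at least some regularity, which holds in all cases of interest here.

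\medskip
I do not expect a serious obstacle; the only points requiring care are (i) ensuring the constants genuinely depend only on $\|\partial^\beta V\|_{L^\infty}$ with $|\beta| \leq \min(|\alpha|,k)$ and on the Hölder seminorm of the top derivatives — this is where I would be most careful, particularly the bookkeeping when $|\alpha| < k$ where one uses the gradient bound $\|\partial^{\alpha}\nabla V\|_\infty = \|\partial^{\alpha'}V\|_\infty$ with $|\alpha'| = |\alpha| + 1 \leq k$; and (ii) the convention for $\kappa > 1$, where one uses $C_0^{2,\kappa} = C_0^{2+\lfloor\kappa\rfloor, \kappa - \lfloor\kappa\rfloor}$, so all the above is applied with $k$ replaced by $k + \lfloor\kappa\rfloor$ and $\kappa$ by its fractional part — the arguments are identical. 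Everything else is routine convolution estimates.
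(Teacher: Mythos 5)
Your construction (mollification) is the standard one, and it is the route taken in the references the paper cites for \eqref{EQ:rough_potential_local} (the paper itself only proves the second, non-critical-condition part, exactly as you do). Your treatment of the cases $\abs{\alpha}=k$ and $\abs{\alpha}>k$ is correct, and the final lower-bound argument matches the paper's. However, there is a genuine error in the case $\abs{\alpha}<k$: the chain $\abs{\partial^\alpha V(x)-\partial^\alpha V(x-y)}\leq C\abs{y}\leq C\varepsilon\leq C\varepsilon^{k+\kappa-\abs{\alpha}}$ is backwards at the last step. For $\varepsilon\leq 1$ and $k+\kappa-\abs{\alpha}\geq 1$ one has $\varepsilon^{k+\kappa-\abs{\alpha}}\leq\varepsilon$, not the reverse, so a bound of order $\varepsilon$ does not imply the required bound of order $\varepsilon^{k+\kappa-\abs{\alpha}}$. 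This is not a cosmetic issue: the single most-used instance of \eqref{EQ:rough_potential_local} in the paper is $\alpha=0$ with $k=2$, namely $\norm{V-V_\varepsilon}_{L^\infty}\leq C\varepsilon^{2+\kappa}$, which drives the choice $\varepsilon^{2+\kappa}=\hbar^{2+\gamma}$ in the proof of Theorem~\ref{THM:Loc_mod_prob}; your crude estimate only yields $C\varepsilon$ there.

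The repair is the route you explicitly set aside. Choose $\chi\in C_0^\infty(B(0,1))$ with $\int\chi=1$ and vanishing moments $\int y^\beta\chi(y)\,dy=0$ for all $1\leq\abs{\beta}\leq k$ (such $\chi$ exists, though it cannot be taken nonnegative). Then Taylor-expand $\partial^\alpha V(x-y)$ about $x$ to order $k-\abs{\alpha}$; the polynomial terms integrate to zero against $\chi_\varepsilon$, and the remainder is bounded by $C\abs{y}^{k-\abs{\alpha}+\kappa}$ using the H\"older continuity of the derivatives of order $k$, which upon integration gives $C\varepsilon^{k+\kappa-\abs{\alpha}}$ as required. (The loss of positivity of $\chi$ is harmless for everything else you do.) With that modification the rest of your argument, including the dependence of the constants and the treatment of $\kappa>1$ via the paper's convention, goes through.
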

\begin{proof}
A proof of the estimates in \eqref{EQ:rough_potential_local} can be found in either \cite[Proposition~1.1]{MR1974450} or \cite[Proposition 4.A.2]{ivrii2019microlocal1}. The second part of the lemma is a direct consequence of the estimates in \eqref{EQ:rough_potential_local}. To see this note that
\begin{equation}
	|V_\varepsilon(x) -V(x)| \leq C_0 \varepsilon^{k+\kappa} \implies |V_\varepsilon(x)| \geq  |V(x)| - C_0 \varepsilon^{k+\kappa}.
\end{equation}
Hence for $C_0\varepsilon^{k+\kappa} < \frac{c}{2}$ we obtain the desired estimate. This concludes the proof.
\end{proof}
We will in the following call the potentials depending on the parameter $\varepsilon$ for rough potentials. 
\begin{remark}\label{RE:approximations} \hfill \\
\begin{enumerate}
\item\label{RE:approximations_1}
Let $\mathcal{H}_{\hbar,\mu}$ be an operator acting in $L^2(\R^d)$ and assume it satisfies Assumption~\ref{Assumption:local_potential_1} with some open set $\Omega$, numbers $\hbar>0$, $\mu\geq0$ and $\gamma\in[0,1]$.  Whenever we have such an operator we have by assumption the associated magnetic Schr\"odinger operator $H_{\hbar,\mu}= (-i\hbar\nabla - \mu a)^2  + V$, where $V\in C_0^{5,\kappa}(\R^d)$. Applying Lemma~\ref{LE:rough_potential_local} to $V$ we can also associate the approximating rough Schr\"odinger operator $H_{\hbar,\mu,\varepsilon}= (-i\hbar\nabla - \mu a)^2  + V_\varepsilon$ to $\mathcal{H}_{\hbar,\mu}$. In what follows when we have an operator $\mathcal{H}_{\hbar,\mu}$ satisfies Assumption~\ref{Assumption:local_potential_1} we will just say with associated rough Schr\"odinger operator $H_{\hbar,\mu,\varepsilon}$. This will always be the operator we get from replacing $V$ by $V_\varepsilon$ from Lemma~\ref{LE:rough_potential_local}. 

\item \label{RE:approximations_2}
As mentioned in the introduction it would have been desirable not to assume that the magnetic vector potential is smooth. We could have used a version of Lemma~\ref{LE:rough_potential_local} to construct a smooth approximation under suitable regularity conditions. However, we will later use that $H_{\hbar,\mu} - H_{\hbar,\mu,\varepsilon}$ is a bounded operator, where we have used the same notation as above. Had we instead also replaced the magnetic vector potential with a smoothed out version and considered the operator $\tilde{H}_{\hbar,\mu,\varepsilon}= (-i\hbar\nabla - \mu a_\varepsilon)^2  + V_\varepsilon$ we would no longer have that $H_{\hbar,\mu} - \tilde{H}_{\hbar,\mu,\varepsilon}$ can be defined as a bounded operator.

\item\label{RE:approximations_3}
The non-critical condition introduced in Lemma~\ref{LE:rough_potential_local} is not the same as mentioned in the introduction. We have included a power of the semiclassical parameter so that the assumption is now the following 
\begin{equation}\label{EQ:RE:approximations_1}
	|V(x)| +\hbar^{\frac{2}{3}} \geq c \qquad\text{for all $x\in\Omega$}.
\end{equation}
Firstly if this assumption is met we can be in either of the following two cases
\begin{equation*}
	|V(x)|  \geq \frac{c}{2} \quad\text{for all $x\in\Omega$} \qquad\text{or}\qquad  \hbar^{\frac{2}{3}} \geq \frac{c}{2} \quad\text{for all $x\in\Omega$}.
\end{equation*}
If we are in the first case we have the usual non-critical condition. When we are in the second case, we have that the size of the semiclassical parameter is bounded from below. Since we by standard arguments can verify that all quantities of interest are finite, we can obtain the desired results by a suitable choice of constants. Hence we can make this more ``general''  non-critical assumption. 

The second thing is we have added $\hbar^{\frac{2}{3}}$ and not just $\hbar$. The above argument would also work in this case, however later we will do a scaling argument. When performing this scaling argument we need to ensure the smallest scale we are working on is of order $\hbar$. Since this is the smallest scale on which we can obtain favourable estimates. For this scaling argument we will have two functions $l(x)$ and $f(x)=\sqrt{l(x)}$ and we will need these to satisfy that
\begin{equation}\label{EQ:RE:approximations_2}
	l(x)f(x) \geq C \hbar,
\end{equation}
for all $x\in\R^d$ with some positive constant $C$. The exact choice for the function $l(x)$ will be proportional to \eqref{EQ:RE:approximations_1}. Hence we need the power of $\hbar$ appearing in \eqref{EQ:RE:approximations_1} to be less than or equal to $\frac{2}{3}$ to ensure the estimate in \eqref{EQ:RE:approximations_2}. The reason for choosing $\frac{2}{3}$ and not a smaller power is the identity
\begin{equation}\label{EQ:RE:approximations_2}
	(l(x)f(x))^{\frac{2}{3}} = l(x),
\end{equation}
where we have used how we choose the function $f$.
 \end{enumerate}
\end{remark}
\section{Rough $\hbar$-pseudo-differential operators}\label{sec:Rough_pseudo_diff_op} 
Our proof is based on the theory of $\hbar$-pseudo-differential operators ($\hbar$-$\Psi$DO's). To be precise we will need a rough version of the general theory. We will here recall properties and results concerning rough $\hbar$-$\Psi$DO's. A more complete discussion of these operators can be found in \cite{MR4689394}.  A version of rough $\hbar$-$\Psi$DO theory can be found in \cite{ivrii2019microlocal1}. It first appears in Vol. I Section 2.3.
\subsection{Definitions and basic properties}
By a rough pseudo-differential operator $A_\varepsilon(\hbar) = \OpW(a_\varepsilon)$ of regularity $\tau$ we mean the operator  
  \begin{equation}\label{EQ_def_PDO_op}
    \OpW(a_\varepsilon)\psi(x) = \frac{1}{(2\pi\hbar)^d} \int_{\R^{2d}}  e^{i\hbar^{-1} \langle x-y,p\rangle} a_\varepsilon( \tfrac{x +y}{2},p) \psi(y) \, d y \, d p \quad\text{for $\psi\in\mathcal{S}(\R^d)$},
  \end{equation}
where $a_\varepsilon(x,p)$ is a rough symbol of regularity $\tau \in \Z$ and satisfies for all  $\alpha,\beta\in\N^d_0$ that
\begin{equation}\label{symbolb_est_def}
	\begin{aligned}
  	|\partial_x^\alpha\partial_p^\beta a_\varepsilon(x,p)| \leq C_{\alpha\beta} \varepsilon^{\min(0,\tau-\abs{\alpha})} m (x,p) \quad\text{for all $(x,p)\in\R^d\times\R^d$}, 
    \end{aligned}
  \end{equation}
where $C_{\alpha\beta}$ is independent of $\varepsilon$ and $m$ is a tempered weight function. A tempered weight function is in some parts of the literature called an order function. The integral in \eqref{EQ_def_PDO_op} should be understood as an oscillatory integral. For $\varepsilon>0$, $\tau\in\Z$ and a tempered weight function $m$ we will use the notation $\Gamma_{\varepsilon}^{m,\tau}(\R^{2d})$ for the set of all $ a_\varepsilon(x,p)\in C^\infty(\R^{2d})$ which satisfies \eqref{symbolb_est_def} for all $\alpha,\beta\in\N^d_0$.

As we are interested in traces of our operators, it will be important for us to know, when the operator is bounded and trace class. This is the content of the following two theorems.

\begin{thm}\label{THM:cal-val-thm}
  Let $a_\varepsilon \in \Gamma_{\varepsilon}^{m,\tau}(\R^{2d})$, where we assume $m\in L^\infty(\R^{2d})$ and $\tau\geq0$. Suppose $\hbar\in(0,\hbar_0]$ and there exists a
  $\delta$ in $(0,1)$ such that $\varepsilon\geq\hbar^{1-\delta}$. Then there exists a constant
  $C_d$ and an integer $k_d$ only depending on the dimension such that
  \begin{equation*}
    \norm{\OpW(a_\varepsilon)\psi}_{L^2(\R^d)} \leq C_d \sup_{\substack{\abs{\alpha},\abs{\beta}\leq k_d \\ (x,p)\in \R^{2d}}} \varepsilon^{\abs{\alpha}} \abs{\partial_x^\alpha \partial_p^\beta a_\varepsilon(x,p)} \norm{\psi}_{L^2(\R^d)} \quad\text{for all $\psi\in\mathcal{S}(\R^d)$}.
  \end{equation*}
  Especially $\OpW(a_\varepsilon)$ can be extended to a bounded operator on $L^2(\R^d)$.
\end{thm}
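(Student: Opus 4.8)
The plan is to reduce the statement to the smooth (non-rough) semiclassical Calder\'on--Vaillancourt theorem by conjugating $\OpW(a_\varepsilon)$ with an $L^2$-dilation, which turns the $\varepsilon$-losses allowed in \eqref{symbolb_est_def} into a harmless rescaling of the semiclassical parameter. I will invoke the classical fact that there are constants $C_d>0$ and $k_d\in\N$, depending only on $d$, such that for every $\hbar'\in(0,1]$ and every $b\in C^\infty(\R^{2d})$ with $\sup_{|\alpha|,|\beta|\le k_d}\norm{\partial_x^\alpha\partial_p^\beta b}_{L^\infty(\R^{2d})}<\infty$, the operator $\mathrm{Op}^{w}_{\hbar'}(b)$ --- by which I mean the right-hand side of \eqref{EQ_def_PDO_op} with $\hbar$ replaced by $\hbar'$ --- is bounded on $L^2(\R^d)$ with operator norm at most $C_d\sup_{|\alpha|,|\beta|\le k_d}\sup_{(x,p)\in\R^{2d}}\abs{\partial_x^\alpha\partial_p^\beta b(x,p)}$; see e.g. \cite{MR2952218,MR1735654}.

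\emph{Step 1 (dilation).} For $\lambda>0$ let $V_\lambda\psi(x)=\lambda^{d/2}\psi(\lambda x)$; this is a unitary operator on $L^2(\R^d)$ that maps $\mathcal S(\R^d)$ onto itself. A change of variables in the oscillatory integral \eqref{EQ_def_PDO_op} (substituting the integration variable $y\mapsto\lambda^{-1}y$ and tracking the Jacobian and the modified phase) shows, on $\mathcal S(\R^d)$, that
\begin{equation*}
	V_\lambda^{-1}\,\OpW(a_\varepsilon)\,V_\lambda=\mathrm{Op}^{w}_{\hbar\lambda}(b_\lambda),\qquad b_\lambda(x,p)\coloneqq a_\varepsilon(x/\lambda,p).
\end{equation*}
I then fix $\lambda=\varepsilon^{-1}$, so that $b_\lambda(x,p)=a_\varepsilon(\varepsilon x,p)$ and the rescaled parameter is $\hbar'\coloneqq\hbar\lambda=\hbar/\varepsilon$.

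\emph{Step 2 (the dilated symbol, and the parameter range).} Since $\partial_x^\alpha\partial_p^\beta b_\lambda(x,p)=\varepsilon^{|\alpha|}(\partial_x^\alpha\partial_p^\beta a_\varepsilon)(\varepsilon x,p)$, the bound \eqref{symbolb_est_def}, the hypothesis $m\in L^\infty(\R^{2d})$ and $\tau\ge0$ give
\begin{equation*}
	\abs{\partial_x^\alpha\partial_p^\beta b_\lambda(x,p)}\le C_{\alpha\beta}\,\varepsilon^{|\alpha|+\min(0,\tau-|\alpha|)}\norm{m}_{L^\infty}=C_{\alpha\beta}\,\varepsilon^{\min(|\alpha|,\tau)}\norm{m}_{L^\infty}<\infty,
\end{equation*}
so $b_\lambda$ is a smooth symbol with bounded derivatives of every order; moreover, since dilating the $x$-variable does not affect sup-norms, $\sup_{(x,p)}\abs{\partial_x^\alpha\partial_p^\beta b_\lambda(x,p)}=\sup_{(x,p)}\varepsilon^{|\alpha|}\abs{\partial_x^\alpha\partial_p^\beta a_\varepsilon(x,p)}$. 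Finally, from $\varepsilon\ge\hbar^{1-\delta}$ and $\hbar\le\hbar_0\le1$ one has $\hbar'=\hbar/\varepsilon\le\hbar^{\delta}\le1$, so $\hbar'\in(0,1]$ and the smooth estimate of the first paragraph applies to $\mathrm{Op}^{w}_{\hbar'}(b_\lambda)$.

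\emph{Step 3 (conclusion).} Using that $V_\lambda$ is unitary, so conjugation preserves the operator norm, together with the identity for sup-norms from Step~2,
\begin{equation*}
	\norm{\OpW(a_\varepsilon)}_{\mathrm{op}}=\norm{\mathrm{Op}^{w}_{\hbar'}(b_\lambda)}_{\mathrm{op}}\le C_d\sup_{\substack{|\alpha|,|\beta|\le k_d\\(x,p)\in\R^{2d}}}\varepsilon^{|\alpha|}\abs{\partial_x^\alpha\partial_p^\beta a_\varepsilon(x,p)},
\end{equation*}
which, applied to $\psi\in\mathcal S(\R^d)$, is the asserted inequality; the bounded extension to $L^2(\R^d)$ then follows by density. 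Granting the smooth theorem, the proof is essentially bookkeeping, and the only point really needing attention is that the hypothesis $\varepsilon\ge\hbar^{1-\delta}$ is exactly what keeps the rescaled parameter $\hbar/\varepsilon$ inside $(0,1]$. The substantial ingredient is therefore the smooth semiclassical Calder\'on--Vaillancourt estimate itself, whose proof proceeds via a Cotlar--Stein almost-orthogonality decomposition of phase space at the scale $\sqrt{\hbar'}$, with integration-by-parts bounds on the off-diagonal pieces; this is classical, so I would cite it rather than reprove it.
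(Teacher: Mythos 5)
Your argument is correct: the dilation identity $V_\lambda^{-1}\OpW(a_\varepsilon)V_\lambda=\mathrm{Op}^w_{\hbar\lambda}(a_\varepsilon(\cdot/\lambda,\cdot))$ checks out, the choice $\lambda=\varepsilon^{-1}$ turns the rough symbol into one with uniformly bounded derivatives, and $\varepsilon\geq\hbar^{1-\delta}$ is precisely what keeps $\hbar/\varepsilon\leq\hbar^{\delta}\leq1$ so the uniform semiclassical Calder\'on--Vaillancourt bound applies. Note that the paper does not prove this theorem at all --- it imports it verbatim as Theorem~3.25 of \cite{mikkelsen2022optimal} --- and the rescaling reduction you give is essentially the standard argument used there for rough symbols, so your proposal matches the intended proof rather than offering a genuinely different route.
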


\begin{thm}\label{THM:thm_est_tr}
 There exists a constant $C(d)$ only depending on the dimension such that
\begin{equation*}
    \norm{\OpW(a_\varepsilon)}_{\Tr} \leq  \frac{ C(d)} {\hbar^d} \sum_{\abs{\alpha}+\abs{\beta}\leq 2d+2}  \varepsilon^{\abs{\alpha}}   \hbar^{\delta\abs{\beta}}  \int_{\R^{2d}} |\partial_x^\alpha \partial_p^\beta a_\varepsilon(x,p)|  \,dxdp.
  \end{equation*}
  for every rough symbol $a_\varepsilon\in\Gamma_{\varepsilon}^{m,\tau}(\R^{2d})$ with
  $\tau\geq0$, $\hbar\in(0,\hbar_0]$ and $\varepsilon\geq\hbar^{1-\delta}$ for some $\delta\in(0,1)$.
\end{thm}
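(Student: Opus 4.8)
The plan is to reduce the estimate to the case of a genuinely smooth symbol by one unitary rescaling of phase space, and then to use the classical trace‑norm bound for smooth semiclassical pseudo‑differential operators.

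First I would record the rescaling. For $\lambda>0$ the dilation $W_\lambda\psi:=\lambda^{d/2}\psi(\lambda\,\cdot)$ is unitary on $L^2(\R^d)$, and a change of variables in the oscillatory integral \eqref{EQ_def_PDO_op} shows that for every $b\in C^\infty(\R^{2d})$,
\begin{equation*}
 W_\varepsilon\,\OpW(b)\,W_\varepsilon^{-1}=\mathrm{Op}^{w}_{\hbar'}(\tilde b),\qquad
 \tilde b(X,P):=b(\varepsilon X,\hbar^{\delta}P),\qquad \hbar':=\hbar^{1-\delta}\varepsilon^{-1},
\end{equation*}
where $\mathrm{Op}^{w}_{\hbar'}$ denotes the $\hbar'$-Weyl quantization. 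The hypothesis $\varepsilon\geq\hbar^{1-\delta}$ is exactly what makes $\hbar'\in(0,1]$. Since $W_\varepsilon$ is unitary,
\begin{equation*}
 \norm{\OpW(a_\varepsilon)}_{\Tr}=\norm{\mathrm{Op}^{w}_{\hbar'}(\tilde a_\varepsilon)}_{\Tr},\qquad \tilde a_\varepsilon(X,P)=a_\varepsilon(\varepsilon X,\hbar^{\delta}P),
\end{equation*}
and $\tilde a_\varepsilon\in C^\infty(\R^{2d})$: the only ``rough'' feature of $a_\varepsilon$, the $\varepsilon^{-|\alpha|}$ growth of its $x$-derivatives, is precisely compensated, because $\partial_X^{\alpha}\partial_P^{\beta}\tilde a_\varepsilon=\varepsilon^{|\alpha|}\hbar^{\delta|\beta|}(\partial_x^{\alpha}\partial_p^{\beta}a_\varepsilon)(\varepsilon\,\cdot,\hbar^{\delta}\,\cdot)$.

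Next I would invoke the classical trace estimate for smooth semiclassical operators: there is a constant $C(d)$ such that, for every $h\in(0,1]$ and every $b\in C^{\infty}(\R^{2d})$ with finite right‑hand side,
\begin{equation*}
 \norm{\mathrm{Op}^{w}_{h}(b)}_{\Tr}\leq\frac{C(d)}{h^{d}}\sum_{|\alpha|+|\beta|\leq 2d+2}\int_{\R^{2d}}|\partial_x^{\alpha}\partial_p^{\beta}b(x,p)|\,dx\,dp .
\end{equation*}
This is a standard trace‑ideal bound (see e.g. \cite{mikkelsen2022optimal,ivrii2019microlocal1}); the factor $h^{-d}$ reflects that $(1+x^{2}+(hD)^{2})^{-N}$ is positive and trace class with trace $\leq C(N,d)h^{-d}$ for $N>d$, and the exponent $2d+2$ is what is needed for a Sobolev‑type control of the symbol in the $2d$ phase‑space variables plus the margin for the trace‑class property; equivalently one works from the integral kernel $K_h(x,y)=(2\pi h)^{-d}\int_{\R^d}e^{i\langle x-y,p\rangle/h}b(\tfrac{x+y}{2},p)\,dp$, which is $h^{-d}$ times a partial inverse Fourier transform of $b$ in the momentum variable. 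Applying the estimate with $h=\hbar'$ and $b=\tilde a_\varepsilon$ and undoing the change of variables, namely $(\hbar')^{-d}\norm{\partial_X^{\alpha}\partial_P^{\beta}\tilde a_\varepsilon}_{L^{1}(\R^{2d})}=\hbar^{-d}\varepsilon^{|\alpha|}\hbar^{\delta|\beta|}\norm{\partial_x^{\alpha}\partial_p^{\beta}a_\varepsilon}_{L^{1}(\R^{2d})}$, gives precisely the asserted inequality. (If the right‑hand side of the theorem is infinite there is nothing to prove.)

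The rescaling step is soft, so the real work sits in the smooth estimate, and the main obstacle is to obtain it with the \emph{sharp} exponent $2d+2$ and in $L^{1}$ form. The $L^{1}$ form is what makes the reduction clean: it is precisely because the smooth bound is already phrased through $L^{1}$‑norms of the symbol that the rescaling transfers it with no loss, and one does not need the alternative, more laborious route of chopping phase space into cells of size $\varepsilon$ in $x$ and $\hbar^{\delta}$ in $p$ (legitimate since $\varepsilon\hbar^{\delta}\geq\hbar$), estimating each piece after rescaling it to a unit cell, and summing by the triangle inequality. Keeping the trace‑ideal (or kernel) bookkeeping economical enough to land on exactly $2d+2$ derivatives, and making sure the smooth estimate is uniform over all $h\in(0,1]$ rather than only $h\leq\hbar_{0}$ (the rescaled parameter $\hbar'$ may be of order one), are the points requiring care.
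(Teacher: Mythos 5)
The paper does not prove this statement: it simply cites it as Theorem~3.26 of \cite{mikkelsen2022optimal}, so there is no in-paper argument to compare against. That said, your unitary-rescaling reduction is correct and almost certainly the intended one. The weights $\varepsilon^{|\alpha|}\hbar^{\delta|\beta|}$ on the right-hand side are exactly the chain-rule factors that your dilation produces, which is a strong signal that the cited proof proceeds the same way. Your bookkeeping checks out: with $W_\varepsilon\psi=\varepsilon^{d/2}\psi(\varepsilon\,\cdot)$, a direct change of variables in \eqref{EQ_def_PDO_op} gives $W_\varepsilon\OpW(b)W_\varepsilon^{-1}=\mathrm{Op}^w_{\hbar'}(\tilde b)$ with $\tilde b(X,P)=b(\varepsilon X,\hbar^{\delta}P)$ and $\hbar'=\hbar^{1-\delta}/\varepsilon\in(0,1]$ precisely because $\varepsilon\geq\hbar^{1-\delta}$, the prefactor $(\hbar')^{-d}=\varepsilon^d\hbar^{\delta d}/\hbar^d$ comes out right, and the $L^1$-norm identity $(\hbar')^{-d}\lVert\partial_X^{\alpha}\partial_P^{\beta}\tilde a_\varepsilon\rVert_{L^1}=\hbar^{-d}\varepsilon^{|\alpha|}\hbar^{\delta|\beta|}\lVert\partial_x^{\alpha}\partial_p^{\beta}a_\varepsilon\rVert_{L^1}$ follows from the chain rule plus the Jacobian $(\varepsilon\hbar^{\delta})^d$. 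The one point to nail down, which you yourself correctly flag, is the smooth input estimate: you must establish the $L^1$-based trace bound with exactly $2d+2$ derivatives and uniformly over \emph{all} $h\in(0,1]$ (not merely $h\leq\hbar_0$ small), since the renormalized parameter $\hbar'$ can be of order one. This is standard but requires an explicit kernel or trace-ideal argument rather than a gesture at ``classical results''; once that is pinned down the proof is complete.
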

Both of these theorems can be found in \cite{MR4689394}, where they are Theorem~{3.25} and Theorem~3.26 respectively.
We will also need to calculate the trace of a rough  $\hbar$-$\Psi$DO. This is the content of the next theorem.
\begin{thm}\label{BTHM:trace_formula}
  Let $a_\varepsilon$ be in $\Gamma_{\varepsilon}^{m,\tau}(\R^{2d})$ with $\tau\geq0$ and suppose $\partial_x^\alpha \partial_p^\beta a_\varepsilon(x,p)$ is an
  element of $L^1(\R^{2d})$ for all $\abs{\alpha}+\abs{\beta}\leq 2d+2$. Then $\OpW(a_\varepsilon)$ is trace class and
  \begin{equation*}
    \Tr(\OpW(a_\varepsilon))=\frac{1}{(2\pi\hbar)^d} \int_{\R^{2d}} a_\varepsilon(x,p) \,dxdp.
  \end{equation*}	
\end{thm}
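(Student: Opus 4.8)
The plan is to deduce trace-class membership from the quantitative bound in Theorem~\ref{THM:thm_est_tr}, and then to prove the identity by approximating $a_\varepsilon$ by compactly supported symbols, for which the formula is classical, and passing to the limit. Throughout one works, as in Theorems~\ref{THM:cal-val-thm} and~\ref{THM:thm_est_tr}, with $\varepsilon\ge\hbar^{1-\delta}$ for some fixed $\delta\in(0,1)$. The hypothesis that $\partial_x^\alpha\partial_p^\beta a_\varepsilon\in L^1(\R^{2d})$ for $|\alpha|+|\beta|\le 2d+2$ makes the right-hand side of Theorem~\ref{THM:thm_est_tr} finite, so $\OpW(a_\varepsilon)$ is trace class; I would use that same estimate to control trace norms in the limiting step.

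First I would record the heuristic that identifies the value. By \eqref{EQ_def_PDO_op}, $\OpW(a_\varepsilon)$ is the integral operator with kernel
\begin{equation*}
  K_{a_\varepsilon}(x,y)=\frac{1}{(2\pi\hbar)^d}\int_{\R^d} e^{i\hbar^{-1}\langle x-y,p\rangle}\,a_\varepsilon\!\big(\tfrac{x+y}{2},p\big)\,dp ,
\end{equation*}
and, since $a_\varepsilon\in L^1(\R^{2d})$, the diagonal is absolutely integrable with $\int_{\R^d}K_{a_\varepsilon}(x,x)\,dx=\frac{1}{(2\pi\hbar)^d}\int_{\R^{2d}}a_\varepsilon(x,p)\,dx\,dp$, which is the asserted value. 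The theorem is therefore the identity $\Tr\OpW(a_\varepsilon)=\int_{\R^d}K_{a_\varepsilon}(x,x)\,dx$; the nontrivial point is that ``trace equals integral of the kernel on the diagonal'' is not automatic for a general trace-class operator and must be justified.

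To justify it I would cut off. Fix $\chi\in C_0^\infty(\R^{2d})$ with $0\le\chi\le1$ and $\chi\equiv1$ near the origin, set $\chi_R(x,p)=\chi(x/R,p/R)$ and $a_\varepsilon^{(R)}=\chi_R a_\varepsilon\in C_0^\infty(\R^{2d})$. Since all derivatives of $\chi_R$ are bounded uniformly in $R\ge1$, both $a_\varepsilon^{(R)}$ and $a_\varepsilon^{(R)}-a_\varepsilon$ lie in $\Gamma_\varepsilon^{m,\tau}(\R^{2d})$ with constants independent of $R$, and a Leibniz expansion shows $\partial_x^\alpha\partial_p^\beta(a_\varepsilon^{(R)}-a_\varepsilon)\to 0$ in $L^1(\R^{2d})$ as $R\to\infty$ for every $|\alpha|+|\beta|\le 2d+2$: the term with no derivative falling on $\chi_R$ converges by dominated convergence, and each remaining term carries a factor $\partial^\gamma\chi_R$ that is $O(R^{-|\gamma|})$ in supremum norm multiplying an $L^1$ derivative of $a_\varepsilon$. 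For a compactly supported (hence Schwartz) symbol $b$, the kernel $K_b$ is a Schwartz function on $\R^{2d}$, so $\OpW(b)$ is trace class with $\Tr\OpW(b)=\int_{\R^d}K_b(x,x)\,dx=\frac{1}{(2\pi\hbar)^d}\int_{\R^{2d}}b(x,p)\,dx\,dp$ by Fourier inversion --- the classical Weyl trace formula (one standard way to see trace-class membership is to factor $\OpW(b)=(1+|x|^2-\hbar^2\Delta)^{-N}\circ\big[(1+|x|^2-\hbar^2\Delta)^{N}\OpW(b)\big]$ with $N>d$, a product of a trace-class and a bounded operator). Applying this with $b=a_\varepsilon^{(R)}$ and letting $R\to\infty$ would finish the argument: the right-hand side converges to $\frac{1}{(2\pi\hbar)^d}\int_{\R^{2d}}a_\varepsilon$ by dominated convergence ($|a_\varepsilon^{(R)}|\le|a_\varepsilon|\in L^1$), while Theorem~\ref{THM:thm_est_tr} applied to $a_\varepsilon^{(R)}-a_\varepsilon$ gives $\norm{\OpW(a_\varepsilon^{(R)})-\OpW(a_\varepsilon)}_{\Tr}\to 0$, hence $\Tr\OpW(a_\varepsilon^{(R)})\to\Tr\OpW(a_\varepsilon)$ because $|\Tr S|\le\norm{S}_{\Tr}$.

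I expect the main obstacle to be exactly this reduction: making rigorous the passage from the trace to the integral of the kernel on the diagonal, which fails for generic trace-class operators. The cut-off is what reduces everything to Schwartz symbols, where this passage is standard, and the quantitative trace-norm bound of Theorem~\ref{THM:thm_est_tr} (which is precisely why $L^1$ control of derivatives up to order $2d+2$ is assumed) is what transports the identity back to the rough symbol $a_\varepsilon$. The remaining bookkeeping --- the Leibniz estimate and the two applications of dominated convergence --- is routine.
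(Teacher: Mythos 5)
The paper does not prove this statement at all: Theorem~\ref{BTHM:trace_formula} is quoted verbatim as Theorem~3.27 of \cite{mikkelsen2022optimal}, so there is no in-paper argument to compare against. Judged on its own, your proof is correct and follows the standard route one would expect that reference to take: trace-class membership and trace-norm control come from the quantitative bound of Theorem~\ref{THM:thm_est_tr}, the identity is first established for compactly supported symbols (where the operator has a Schwartz kernel and the classical Weyl trace formula applies), and the cut-off error $a_\varepsilon^{(R)}-a_\varepsilon$ is killed in trace norm via the Leibniz expansion and dominated convergence, with $|\Tr S|\le\norm{S}_1$ transporting the identity to the limit. Your observation that ``trace equals the integral of the kernel on the diagonal'' is exactly the non-automatic step, and that the cut-off reduces it to the Schwartz case, is the right way to organize the argument.

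Two small caveats worth recording. First, your proof inherits the hypothesis $\varepsilon\ge\hbar^{1-\delta}$ from Theorem~\ref{THM:thm_est_tr}, whereas the statement of Theorem~\ref{BTHM:trace_formula} does not mention it; this is harmless in this paper (the condition holds in every application, and indeed the trace-norm bound you rely on is only stated under it), but strictly speaking your argument proves the theorem only under that extra assumption. Second, the ``classical Weyl trace formula'' for $b\in C_0^\infty(\R^{2d})$ is itself a nontrivial statement whose proof (e.g.\ expansion in a Hermite basis, or writing the operator as a product of two Hilbert--Schmidt operators with continuous kernels) you are invoking as known; that is acceptable since it is genuinely standard, but it is the one ingredient of your argument that is not reduced to results quoted in this paper.
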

This theorem is Theorem~{3.27} from \cite{MR4689394}. We will also need to compose operators. The following theorem is a simplified version of Theorem~{3.24} from \cite{MR4689394} on composing rough $\hbar$-$\Psi$DO's.
\begin{thm}\label{THM:composition-weyl-thm}
Let $a_\varepsilon$ be in $\Gamma_{\varepsilon}^{m_1,\tau_{a}}(\R^{2d})$ and $b_\varepsilon$ be in $\Gamma_{\varepsilon}^{m_2,\tau_{b}}(\R^{2d})$ with $\tau_a,\tau_b\geq0$ and $m_1,m_2\in L^\infty(\R^{2d})$. Suppose $\hbar\in(0,\hbar_0]$ and $\varepsilon \geq \hbar^{1-\delta}$ for a $\delta \in(0,1)$ and let $\tau=\min(\tau_a,\tau_b)$. Then there exists a  a sequence of rough symbols in $\{c_{\varepsilon,j}\}_{j\in\N_0}$ such that $c_{\varepsilon,j} \in \Gamma_{\varepsilon}^{m_1 m_2,\tau-j}(\R^{2d})$ for all $j\in\N_0$ and for every $N\in\N$ there exists $N_\delta\geq N$ such that
  \begin{equation*}
    \OpW(a_\varepsilon) \OpW(b_\varepsilon)  =  \sum_{j=0}^{N_\delta} \hbar^j \OpW(c_{\varepsilon,j}) + \hbar^{N_\delta+1} \mathcal{R}_\varepsilon(N_\delta;\hbar),
  \end{equation*}
where $\mathcal{R}_\varepsilon(N_\delta;\hbar)$ is a rough $\hbar$-$\Psi$DO which satisfies the bound
\begin{equation}
	\hbar^{N_\delta+1} \norm{\mathcal{R}_\varepsilon(N_\delta;\hbar)}_{\mathrm{op}} \leq C_N \hbar^N,
\end{equation}
where $C_N$ is independent of $\varepsilon$, but depend on the numbers $N$, $\norm{m_1}_{L^\infty(\R^{2d})}$, $\norm{m_2}_{L^\infty(\R^{2d})}$ and the constants $C_{\alpha\beta}$ from \eqref{symbolb_est_def} for both $a_\varepsilon$ and $b_\varepsilon$. The rough symbols $c_{\varepsilon,j}$ are given by the formula
  \begin{equation*}
    c_{\varepsilon,j}(x,p) = (-i)^j \sum_{\abs{\alpha}+\abs{\beta}=j} \frac{1}{\alpha!\beta!}\Big(\frac{1}{2} \Big)^{\abs{\alpha}}\Big(-\frac{1}{2} \Big)^{\abs{\beta}} (\partial_p^\alpha \partial_x^\beta a_{\varepsilon})(x,p) (\partial_p^\beta \partial_x^\alpha b_{\varepsilon})(x,p).
  \end{equation*}
\end{thm}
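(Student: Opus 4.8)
\textbf{Proof strategy for Theorem~\ref{THM:composition-weyl-thm}.}

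The plan is to reduce the composition of two rough $\hbar$-$\Psi$DO's to the standard stationary-phase expansion for Weyl symbols, keeping careful track of how the $\varepsilon$-powers in \eqref{symbolb_est_def} propagate, and then to truncate the asymptotic expansion at a level $N_\delta$ large enough that the factor $\hbar^{N_\delta+1}$ from the remainder beats $\hbar^N$ after absorbing the compensating losses $\varepsilon^{-k}\leq\hbar^{-(1-\delta)k}$ coming from the rough symbol estimates. First I would write down the exact oscillatory-integral kernel of $\OpW(a_\varepsilon)\OpW(b_\varepsilon)$: composing the two Weyl quantizations produces a kernel
\begin{equation*}
	K(x,y) = \frac{1}{(2\pi\hbar)^{2d}} \int_{\R^{3d}} e^{i\hbar^{-1}(\langle x-z,p\rangle + \langle z-y,q\rangle)} a_\varepsilon(\tfrac{x+z}{2},p) b_\varepsilon(\tfrac{z+y}{2},q)\,dz\,dp\,dq,
\end{equation*}
and the symbol $(a_\varepsilon \# b_\varepsilon)(x,p)$ of the composition is recovered by the usual change of variables, giving the twisted-product integral
\begin{equation*}
	(a_\varepsilon \# b_\varepsilon)(x,p) = \frac{1}{(\pi\hbar)^{2d}} \int_{\R^{4d}} e^{-\frac{2i}{\hbar}(\langle y_1,\eta_2\rangle - \langle y_2,\eta_1\rangle)} a_\varepsilon(x+y_1,p+\eta_1)\, b_\varepsilon(x+y_2,p+\eta_2)\,dy\,d\eta.
\end{equation*}
This is the standard Weyl composition formula, so the content is entirely in the symbol estimates.

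Next I would apply stationary phase / Taylor expansion in the phase variables. Expanding $a_\varepsilon$ and $b_\varepsilon$ around $(x,p)$ and using that the phase is non-degenerate with the standard symplectic Hessian yields, formally, the series $\sum_j \hbar^j c_{\varepsilon,j}$ with $c_{\varepsilon,j}$ as stated: the combinatorial factor $(-i)^j \frac{1}{\alpha!\beta!}(\tfrac12)^{|\alpha|}(-\tfrac12)^{|\beta|}$ is exactly what the bilinear differential operator $\exp(\tfrac{i\hbar}{2}\sigma(D_x,D_p;D_y,D_\eta))$ produces. The point requiring care is the symbol class membership: differentiating $a_\varepsilon$ in $x$ costs a factor $\varepsilon^{-1}$ whenever we exceed regularity $\tau_a$, and similarly for $b_\varepsilon$; since $c_{\varepsilon,j}$ involves $\partial_p^\alpha\partial_x^\beta a_\varepsilon$ with $|\alpha|+|\beta|=j$ and $\partial_p^\beta\partial_x^\alpha b_\varepsilon$, Leibniz together with the hypothesis $\tau=\min(\tau_a,\tau_b)$ shows $c_{\varepsilon,j}\in\Gamma_\varepsilon^{m_1 m_2,\tau - j}(\R^{2d})$, because the worst $x$-derivative count in either factor, measured against its own regularity, is bounded by $j$ and the product of the two $\varepsilon$-powers is at least $\varepsilon^{\min(0,\tau-j-|\text{extra }x\text{ derivs}|)}$. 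I would record this bookkeeping as a short lemma and then invoke it term by term.

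The remaining and main obstacle is controlling the remainder $\mathcal{R}_\varepsilon(N_\delta;\hbar)$ and choosing $N_\delta$. After Taylor-expanding the phase to order $M$ in the stationary-phase argument, the remainder is an oscillatory integral whose amplitude involves $M+1$ derivatives of $a_\varepsilon$ and $b_\varepsilon$; each extra $x$-derivative beyond regularity contributes $\varepsilon^{-1}\leq\hbar^{-(1-\delta)}$, so the naive bound on the order-$M$ remainder is $\hbar^{M+1}\cdot\hbar^{-(1-\delta)(M+1)} = \hbar^{\delta(M+1)}$ up to the constants $C_{\alpha\beta}$ and the $L^\infty$ norms of $m_1,m_2$; hence given $N$ I would set $M = N_\delta$ with $\delta(N_\delta+1)\geq N$, i.e.\ $N_\delta = \lceil N/\delta\rceil$, which also guarantees $N_\delta\geq N$. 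To turn the oscillatory-integral bound on the remainder symbol into the operator-norm bound $\hbar^{N_\delta+1}\|\mathcal{R}_\varepsilon\|_{\mathrm{op}}\leq C_N\hbar^N$, I would check that the remainder symbol itself lies in some $\Gamma_\varepsilon^{m_1 m_2,\tau-N_\delta-1}$ (with the extra $\varepsilon$-losses explicit) and apply the Calderón--Vaillancourt-type bound of Theorem~\ref{THM:cal-val-thm}, noting that the $\varepsilon^{|\alpha|}$ weights appearing there exactly cancel the $\varepsilon^{-|\alpha|}$ losses from the rough estimates, leaving a clean power of $\hbar$. The delicate points are: (i) justifying the interchange of the $z$-integration with the others in the oscillatory-integral sense, which is handled by the usual integration-by-parts regularization in the non-compact directions, uniformly in $\varepsilon$ since $m_1,m_2$ are bounded; and (ii) verifying that the constant $C_N$ depends only on $N$, $\|m_1\|_{L^\infty}$, $\|m_2\|_{L^\infty}$ and finitely many of the $C_{\alpha\beta}$ — this follows by tracking which derivatives enter the order-$N_\delta$ Taylor remainder and the Calderón--Vaillancourt seminorm, both of which involve only $|\alpha|,|\beta|\leq k_d + N_\delta$ for the dimensional constant $k_d$. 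Since all of this is carried out in full in \cite{mikkelsen2022optimal}, in the write-up I would state the formula, indicate the stationary-phase expansion and the $\varepsilon$-bookkeeping, fix $N_\delta=\lceil N/\delta\rceil$, and refer to \cite{mikkelsen2022optimal} for the routine estimates on the remainder.
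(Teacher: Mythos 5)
The paper does not prove this theorem at all: it is imported verbatim as a simplified form of Theorem~3.24 of \cite{mikkelsen2022optimal}, so there is no in-paper argument to compare against. Your outline is the standard and correct route (Weyl twisted-product integral, stationary-phase/Moyal expansion, $\varepsilon$-bookkeeping showing $c_{\varepsilon,j}\in\Gamma_\varepsilon^{m_1m_2,\tau-j}$, truncation at $N_\delta=\lceil N/\delta\rceil$ so that $\delta(N_\delta+1)\geq N$, and Theorem~\ref{THM:cal-val-thm} for the remainder), and your key observation --- that the $\varepsilon^{|\alpha|}$ weights in the Calder\'on--Vaillancourt seminorm absorb the $\varepsilon^{-|\alpha|}$ losses, leaving only the fixed loss $\varepsilon^{-(N_\delta+1)}\leq\hbar^{-(1-\delta)(N_\delta+1)}$ against $\hbar^{N_\delta+1}$ --- is exactly the point of the rough calculus; the only cosmetic caveat is that the remainder symbol has negative regularity, so Theorem~\ref{THM:cal-val-thm} must be applied after renormalising it to regularity zero, which your seminorm accounting already effectively does.
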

\begin{remark}\label{Re:composition-weyl-thm}
Assume we are in the setting of Theorem~\ref{THM:composition-weyl-thm}. If we had assumed that at least one of the tempered weight functions $m_1$ or $m_2$ was in  $L^\infty(\R^{2d})\cap L^1(\R^{2d})$ we would get that the error term is not just bounded in operator norm but also in trace norm. That is
\begin{equation}
	\hbar^{N_\delta+1} \norm{\mathcal{R}_\varepsilon(N_\delta;\hbar)}_{1} \leq C_N \hbar^{N-d}.
\end{equation}
\end{remark}
The following lemma is an easy consequence of the result on the composition of $\hbar$-$\Psi$DO's. It can also be found in \cite{MR1343781} as Lemma~{2.2}.
\begin{lemma}\label{LE:disjoint_supp_PDO}
Let $\theta_1,\theta_2\in \mathcal{B}^\infty(\R^{2d})$ and suppose that there exists a constant $c>0$ such that
\begin{equation}\label{EQ:disjoint_supp_PDO}
	\dist(\supp(\theta_1),\supp(\theta_2)) \geq c.
\end{equation}
Then for all $N\in\N$, it holds that
\begin{equation*}
	\norm{\OpW(\theta_1) \OpW(\theta_2)}_{\mathrm{op}} \leq C_N \hbar^N.
\end{equation*}
If we further assume $\theta_1\in C_0^\infty(\R^{2d})$ it holds for all $N\in\N$ that
\begin{equation*}
	\norm{\OpW(\theta_1) \OpW(\theta_2)}_{1} \leq C_N \hbar^N.
\end{equation*}
In both cases the constant $C_N$ depends on the numbers $\norm{\partial^\alpha_x \partial^\beta_p \theta_1}_{L^\infty(\R^{2d})}$ and $\norm{\partial^\alpha_x \partial^\beta_p \theta_2}_{L^\infty(\R^{2d})}$ for all $\alpha,\beta\in\N_0^d$. In the second case, the constant $C_N$ will also depend on $c$ from \eqref{EQ:disjoint_supp_PDO}.
\end{lemma}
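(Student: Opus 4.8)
The plan is to deduce both estimates from the composition calculus (Theorem~\ref{THM:composition-weyl-thm}) together with the trace bound (Theorem~\ref{THM:thm_est_tr}) and the operator bound (Theorem~\ref{THM:cal-val-thm}). The key point is that $\OpW(\theta_1)\OpW(\theta_2)$ has an asymptotic expansion whose symbols $c_{\varepsilon,j}$ are built from products $(\partial_p^\alpha\partial_x^\beta\theta_1)(\partial_p^\beta\partial_x^\alpha\theta_2)$; since $\theta_1$ and $\theta_2$ have supports at distance at least $c>0$, every such product vanishes identically. Hence $c_{\varepsilon,j}\equiv 0$ for all $j$, and only the remainder term $\hbar^{N_\delta+1}\mathcal{R}_\varepsilon(N_\delta;\hbar)$ survives. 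By the estimate stated in Theorem~\ref{THM:composition-weyl-thm} this remainder is bounded in operator norm by $C_N\hbar^N$, which gives the first claim (after relabelling $N$).

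First I would set this up carefully. Both $\theta_1,\theta_2$ lie in $\mathcal{B}^\infty(\R^{2d})$, so they are smooth symbols with the constant weight function $m\equiv 1\in L^\infty(\R^{2d})$, and they trivially have regularity $\tau=+\infty$ (or any $\tau\geq 0$) as $\varepsilon$-independent symbols, so $\theta_i\in\Gamma_\varepsilon^{1,\tau}(\R^{2d})$ for any $\tau\geq 0$; in particular the hypotheses of Theorem~\ref{THM:composition-weyl-thm} hold once we fix $\varepsilon\geq\hbar^{1-\delta}$ for some $\delta\in(0,1)$ (this is harmless since the final statement is about the fixed smooth operators $\OpW(\theta_i)$, or one simply takes $\varepsilon=1$). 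Then for any prescribed $N\in\N$ pick $N_\delta\geq N$ as in the theorem; since all $c_{\varepsilon,j}=0$ we get $\OpW(\theta_1)\OpW(\theta_2)=\hbar^{N_\delta+1}\mathcal{R}_\varepsilon(N_\delta;\hbar)$, whose operator norm is at most $C_N\hbar^N$. Since $N$ was arbitrary this is exactly the first bound, with $C_N$ depending only on the stated seminorms of $\theta_1,\theta_2$.

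For the second (trace-norm) bound I would invoke Remark~\ref{Re:composition-weyl-thm}: if $\theta_1\in C_0^\infty(\R^{2d})$, then we may take the weight function of $\theta_1$ to be, say, $m_1(x,p)=\mathbf{1}_{K}(x,p)$-dominating a fixed bounded function supported near $\supp(\theta_1)$, or more simply note that $\theta_1$ itself is an admissible symbol with weight $m_1\in L^\infty\cap L^1$ (any compactly supported smooth majorant of $\langle\theta_1\rangle$-type bounds works, since $\theta_1$ has compact support). With one weight in $L^\infty\cap L^1$, the remark upgrades the remainder estimate to $\hbar^{N_\delta+1}\norm{\mathcal{R}_\varepsilon(N_\delta;\hbar)}_1\leq C_N\hbar^{N-d}$, and again relabelling $N\mapsto N+d$ gives $\norm{\OpW(\theta_1)\OpW(\theta_2)}_1\leq C_N\hbar^N$ for all $N$. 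The dependence of $C_N$ on $c$ enters only through the need that $c_{\varepsilon,j}\equiv 0$, which requires $\dist(\supp\theta_1,\supp\theta_2)>0$; a quantitative role for $c$ appears if one instead wants constants uniform as supports are allowed to approach, but as stated the cleanest route is the vanishing-symbol argument above.

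The only genuine subtlety — and the step I'd flag as the main obstacle — is the bookkeeping around the regularity parameter $\varepsilon$ and the requirement $\varepsilon\geq\hbar^{1-\delta}$ in Theorems~\ref{THM:composition-weyl-thm}--\ref{THM:thm_est_tr}: one must make sure the conclusion holds for the honest smooth operators $\OpW(\theta_i)$ independently of any $\varepsilon$. This is resolved by observing that $\varepsilon$-independent smooth symbols of constant weight belong to $\Gamma_\varepsilon^{1,\tau}$ for \emph{every} $\varepsilon>0$ (the $\varepsilon$-powers in \eqref{symbolb_est_def} only help since $\tau$ may be taken arbitrarily large), so we are free to choose, e.g., $\varepsilon=1$ and $\delta=\tfrac12$, and the constants $C_N$ then depend only on the finitely many seminorms $\norm{\partial_x^\alpha\partial_p^\beta\theta_i}_{L^\infty}$ with $\abs{\alpha}+\abs{\beta}$ up to the order dictated by $N$ (and, in the trace case, by $d$), exactly as asserted. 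Everything else is a direct citation of the quoted theorems.
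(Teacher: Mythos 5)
Your argument is correct and is precisely the route the paper intends: it gives no written proof but states that the lemma "is an easy consequence of the result on composition of $\hbar$-$\Psi$DO's" (citing Lemma~2.2 of the reference), i.e.\ all symbols $c_{\varepsilon,j}$ vanish by disjointness of supports and only the remainder survives, with the trace-norm case handled via Remark~\ref{Re:composition-weyl-thm}. The only cosmetic point is that a compactly supported function cannot itself be a tempered weight; one should take, e.g., $m_1(x,p)=\langle (x,p)\rangle^{-2d-2}\in L^1\cap L^\infty$, which dominates $\theta_1$ and its derivatives since $\theta_1\in C_0^\infty(\R^{2d})$ — your alternative phrasing already covers this.
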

\subsection{Properties of rough Schr\"odinger operators}
We will in the following consider rough Schr\"odinger operators that satisfy the following assumption.
\begin{assumption}\label{Assumption:Rough_schrodinger}
Let $H_{\hbar,\mu,\varepsilon} =  (-i\hbar\nabla - \mu a)^2  + V_\varepsilon$ be a rough Schr\"odinger operator acting in $L^2(\R^d)$. Suppose that $a_j\in C_0^\infty(\R^d)$ for all $j\in\{1,\dots,d\}$ and are real valued. Moreover, suppose that $V_\varepsilon$ is a rough potential of regularity $\tau\geq0$ such that
\begin{enumerate}[label={$\roman*)$}]
  \item\label{R.P.1} $V_\varepsilon$ is real, smooth and $\min_{x\in\R^d} V_\varepsilon(x)>-\infty$.
  \item\label{R.P.2} There exists a $\zeta>0$ such that for all $\alpha\in\N_0^d$ there exists a constant $C_\alpha$ such that
    \begin{equation*}
     |\partial_x^\alpha V_\varepsilon(x)| \leq C_\alpha \varepsilon^{\min(0,\tau-|\alpha|)} (V_\varepsilon(x) +\zeta) \qquad\text{for all $x\in\R^d$}.
    \end{equation*}
  \item\label{R.P.3} There exists two constants $C,M>0$ such that
  \begin{equation*}
  	|V_\varepsilon(x)| \leq C(V_\varepsilon(y)+\zeta) (1+ |x-y|)^M \qquad\text{for all $x,y\in\R^d$}.
  \end{equation*}
  \end{enumerate}
\end{assumption}
\begin{remark}
When a rough Schr\"odinger operator $H_{\hbar,\mu,\varepsilon}$ satisfies Assumption~\ref{Assumption:Rough_schrodinger} it can be shown that as a $\hbar$-$\Psi$DO it is essentially self-adjoint. For details see e.g. \cite[Section 4]{MR4689394}. We will in these cases denote the closure by $H_{\hbar,\mu,\varepsilon}$ as well. 
In the case where  $H_{\hbar,\mu,\varepsilon} =  (-i\hbar\nabla - \mu a)^2  + V_\varepsilon$ with $a_j\in C_0^\infty(\R^d)$ for all $j\in\{1,\dots,d\}$ and $V_\varepsilon$ having compact support we have that $H_{\hbar,\mu,\varepsilon}$ satisfies Assumption~\ref{Assumption:Rough_schrodinger}.
\end{remark}
The following theorem is a simplified version of a more general theorem that can be found in \cite{MR4689394}.
\begin{thm}\label{THM:func_calc}
  Let $H_{\hbar,\mu,\varepsilon} =  (-i\hbar\nabla - \mu a)^2  + V_\varepsilon$ be a rough Schr\"odinger operator of regularity
  $\tau \geq 1$ acting in $L^2(\R^d)$ with $\hbar$ in $(0,\hbar_0]$ and $\mu\in[0,\mu_0]$.
  Suppose that $H_{\hbar,\mu,\varepsilon}$ satisfies
  Assumption~\ref{Assumption:Rough_schrodinger} and there exists a $\delta$
  in $(0,1)$ such that $\varepsilon\geq\hbar^{1-\delta}$. Then for any function $f\in C_0^\infty(\R)$ and every $N\in\N$ there exists a $N_\delta\in\N$ such that
    \begin{equation*}
    f(H_{\hbar,\mu,\varepsilon}) = \sum_{j = 0}^{N_\delta} \hbar^j \OpW(a_{\varepsilon,j}^f) + \hbar^{N_\delta+1} \mathcal{R}_\varepsilon(N_\delta,f;\hbar),
  \end{equation*}
  where
\begin{equation}
	\hbar^{N_\delta+1} \norm{\mathcal{R}_\varepsilon(N_\delta;\hbar)}_{\mathrm{op}} \leq C_N \hbar^N,
\end{equation}
  and 
  \begin{equation}\label{B.func_cal_sym} 
  	\begin{aligned}
  	a_{\varepsilon,0}^f(x,p) &= f( (p- \mu a(x))^2  + V_\varepsilon(x) ),
	\\
	a_{\varepsilon,1}^f(x,p) &= 0,
	\\
    	a_{\varepsilon,j}^f (x,p)&= \sum_{k=1}^{2j-1} \frac{(-1)^k}{k!} d_{\varepsilon,j,k}(x,p) f^{(k)}( (p- \mu a(x))^2  + V_\varepsilon(x)) \quad\quad\text{for $j\geq2$},
    \end{aligned}
  \end{equation}
   where $d_{\varepsilon, j ,k}$ are universal polynomials in
  $\partial_p^\alpha\partial_x^\beta [  (p- \mu a(x))^2  + V_\varepsilon(x)]$ for
  $\abs{\alpha}+\abs{\beta}\leq j$. Especially we have that $a_{\varepsilon,j}^f (x,p)$ is a rough symbol of regularity $\tau-j$ for all $j\in\N_0$.
\end{thm}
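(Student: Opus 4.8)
The plan is to establish the functional calculus expansion for $f(H_{\hbar,\mu,\varepsilon})$ via the Helffer--Sj\"ostrand formula (Theorem~\ref{THM:Helffer-Sjostrand}) combined with a symbolic construction of the resolvent $(z-H_{\hbar,\mu,\varepsilon})^{-1}$ as a rough $\hbar$-$\Psi$DO with controlled $z$-dependence. First I would write $H_{\hbar,\mu,\varepsilon}=\OpW(h_\varepsilon)$ with principal symbol $h_\varepsilon(x,p)=(p-\mu a(x))^2+V_\varepsilon(x)$; since $a_j\in C_0^\infty$ and $V_\varepsilon$ is a rough potential of regularity $\tau$ satisfying Assumption~\ref{Assumption:Rough_schrodinger}, one checks that $h_\varepsilon\in\Gamma_\varepsilon^{m,\tau}$ for the order function $m(x,p)=\langle p\rangle^2+V_\varepsilon(x)+\zeta$ (using hypothesis \ref{R.P.2} to control derivatives of $V_\varepsilon$ by $V_\varepsilon+\zeta$). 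For $z\in\C$ with $\im z\neq 0$ the symbol $(z-h_\varepsilon)^{-1}$ is elliptic in the class $\Gamma_\varepsilon^{m^{-1},\tau}$ with seminorms growing polynomially in $\langle\im z\rangle^{-1}$, so one can run the standard parametrix iteration: set $b_{\varepsilon,0}=(z-h_\varepsilon)^{-1}$ and solve recursively $\OpW(b_\varepsilon^{(N)})\,(z-\OpW(h_\varepsilon))=\id+\hbar^{N+1}\OpW(r_\varepsilon^{(N)})$ using the composition calculus of Theorem~\ref{THM:composition-weyl-thm}, where each $b_{\varepsilon,j}$ is a universal polynomial in $\partial^\alpha h_\varepsilon$ times powers of $(z-h_\varepsilon)^{-1}$, hence a rough symbol of regularity $\tau-j$ whose $z$-dependence is of the form $(z-h_\varepsilon)^{-1-\ell}$ for $\ell\le 2j-1$.

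Next I would insert this parametrix into the Helffer--Sj\"ostrand integral. Writing $(z-H_{\hbar,\mu,\varepsilon})^{-1}=\sum_{j=0}^{N}\hbar^j\OpW(b_{\varepsilon,j})+\hbar^{N+1}(z-H_{\hbar,\mu,\varepsilon})^{-1}\OpW(r_\varepsilon^{(N)})(z-H_{\hbar,\mu,\varepsilon})^{-1}$ (or the one-sided variant), and integrating against $-\frac1\pi\bar\partial\tilde f(z)$, the term with $\hbar^j\OpW(b_{\varepsilon,j})$ produces $\hbar^j\OpW(a_{\varepsilon,j}^f)$ where
\begin{equation*}
a_{\varepsilon,j}^f(x,p)=-\frac1\pi\int_\C \bar\partial\tilde f(z)\,b_{\varepsilon,j}(x,p;z)\,L(dz).
\end{equation*}
Since $b_{\varepsilon,j}$ is a linear combination of $c_{j,k}(x,p)(z-h_\varepsilon(x,p))^{-1-k}$ for $1\le k\le 2j-1$ with $c_{j,k}$ a universal polynomial in the $\partial^\alpha h_\varepsilon$ of order $\le j$, the Cauchy--Pompeiu / almost-analytic identity $-\frac1\pi\int_\C\bar\partial\tilde f(z)(z-t)^{-1-k}L(dz)=\frac{1}{k!}f^{(k)}(t)$ gives exactly the stated formula \eqref{B.func_cal_sym}, with $d_{\varepsilon,j,k}=c_{j,k}$ (and $a_{\varepsilon,0}^f=f(h_\varepsilon)$, $a_{\varepsilon,1}^f=0$ because the $\hbar^1$ term in the parametrix vanishes by the structure of the Moyal product — the subprincipal symbol of $h_\varepsilon$ is zero). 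The regularity claim follows since differentiating $a_{\varepsilon,j}^f$ in $x$ costs at most a factor $\varepsilon^{-1}$ per derivative through the $\partial^\alpha h_\varepsilon$ factors (those of $h_\varepsilon$ itself, of order $2$, are genuinely bounded; only the $V_\varepsilon$-part of order $\ge\tau+1$ loses powers of $\varepsilon$), so $a_{\varepsilon,j}^f\in\Gamma_\varepsilon^{1,\tau-j}$.

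For the remainder I would control $\hbar^{N+1}\|\int_\C\bar\partial\tilde f(z)(z-H_{\hbar,\mu,\varepsilon})^{-1}\OpW(r_\varepsilon^{(N)})(z-H_{\hbar,\mu,\varepsilon})^{-1}L(dz)\|_{\mathrm{op}}$. Here $\|(z-H_{\hbar,\mu,\varepsilon})^{-1}\|_{\mathrm{op}}\le|\im z|^{-1}$ by self-adjointness, $\|\OpW(r_\varepsilon^{(N)})\|_{\mathrm{op}}\le C|\im z|^{-K_N}$ from Theorem~\ref{THM:cal-val-thm} applied to the explicit remainder symbol (which on the support of $\tilde f$ lies in an $L^\infty$ order-function class after multiplying by a cutoff, using that $\bar\partial\tilde f$ is supported near $\R$ where $f$ is compactly supported), and $|\bar\partial\tilde f(z)|\le C_n|\im z|^n$ for every $n$. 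Choosing $n$ large enough makes the $z$-integral converge and yields a bound $\le C_N\hbar^{N+1-L_N}$; taking the initial parametrix order $N_\delta$ sufficiently large (as permitted by Theorem~\ref{THM:composition-weyl-thm}, which already absorbs the $\delta$-loss) gives the desired $C_N\hbar^N$. The main obstacle is the bookkeeping of the $z$-dependence: one must verify that every seminorm of $b_{\varepsilon,j}$ and of $r_\varepsilon^{(N)}$ grows only polynomially in $|\im z|^{-1}$ uniformly in $\varepsilon$, so that the weight $|\bar\partial\tilde f(z)|$ can beat it — this is where hypotheses \ref{R.P.2} and \ref{R.P.3} of Assumption~\ref{Assumption:Rough_schrodinger} are used, to keep the ellipticity of $z-h_\varepsilon$ and the composition errors under $\varepsilon$-uniform control. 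The rest is the routine almost-analytic-extension computation identifying the coefficients.
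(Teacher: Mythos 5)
The paper does not prove this theorem: it is imported verbatim as a ``simplified version of a more general theorem'' from the reference \cite{mikkelsen2022optimal}, so there is no in-paper argument to compare against. Judged on its own merits, your sketch is the standard (and almost certainly the intended) route: Helffer--Sj\"ostrand representation, a semiclassical parametrix for $(z-H_{\hbar,\mu,\varepsilon})^{-1}$ in which $b_{\varepsilon,j}$ is a universal polynomial in $\partial^\alpha h_\varepsilon$ times powers $(z-h_\varepsilon)^{-1-k}$ with $k\le 2j-1$, the identity $-\frac1\pi\int_\C\bar\partial\tilde f(z)(z-t)^{-1-k}\,L(dz)=\frac{1}{k!}f^{(k)}(t)$ to produce \eqref{B.func_cal_sym}, vanishing of the $\hbar^1$ term from the Weyl/Moyal structure, and the $|\bar\partial\tilde f(z)|\le C_n|\im z|^n$ decay to beat the polynomial blow-up of the remainder seminorms. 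The regularity count $\tau-j$ and the role of hypotheses \ref{R.P.2}--\ref{R.P.3} in keeping everything $\varepsilon$-uniform are correctly identified.

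Two caveats, neither fatal but worth naming. First, Theorem~\ref{THM:composition-weyl-thm} as stated in this paper requires \emph{both} order functions to lie in $L^\infty(\R^{2d})$, whereas $h_\varepsilon$ has order function $\langle p\rangle^2+V_\varepsilon+\zeta$; so the parametrix iteration cannot literally invoke that theorem and must appeal to the unrestricted composition calculus of the reference (or insert energy cutoffs before composing). Second, the claim that every seminorm of $b_{\varepsilon,j}$ and $r_\varepsilon^{(N)}$ grows only polynomially in $|\im z|^{-1}$, uniformly in $\varepsilon$, is exactly the nontrivial bookkeeping of the proof; you flag it as the main obstacle but do not carry it out. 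As a proof outline it is sound; as a complete proof it still defers the quantitative resolvent-symbol estimates to the cited machinery.
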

\begin{remark}\label{RE:propagator}
To prove the following theorem one will need to understand the Schr\"odinger propagator associated to $H_{\hbar,\mu,\varepsilon}$. That is the operator $e^{i\hbar^{-1}t H_{\hbar,\mu,\varepsilon}}$. Under the assumptions of the following theorem, we can find an operator with an explicit kernel that locally approximates $e^{i\hbar^{-1}t H_{\hbar,\mu,\varepsilon}}$ in a suitable sense. This local construction is only valid for times of order $\hbar^{1-\frac{\delta}{2}}$. But if we locally have a non-critical condition the approximation can be extended to a small time interval $[-T_0,T_0]$. For further details see \cite{MR4689394}. In the following, we will reference this remark and the number $T_0$. 
\end{remark}
\begin{thm}\label{THM:Expansion_of_trace}
 Let $H_{\hbar,\mu,\varepsilon}$ be a rough Schr\"odinger operator of regularity
  $\tau \geq 2$ acting in $L^2(\R^d)$ with $\hbar$ in $(0,\hbar_0]$ and $\mu\in[0,\mu_0]$ which  satisfies
  Assumption~\ref{Assumption:Rough_schrodinger}. Suppose there exists a $\delta$
  in $(0,1)$ such that $\varepsilon\geq\hbar^{1-\delta}$. Assume that $\theta\in C_0^\infty(\R^{2d})$ and there exists two constants  $\eta,c>0$ such that
  \begin{equation*}
  	|\nu -V_\varepsilon(x)| + \hbar^{\frac{2}{3}} \geq c \qquad\text{for all $(x,p)\in \supp(\theta)$ and $\nu\in(-2\eta,2\eta)$}.
  \end{equation*}
   Let $\chi$ be in $C^\infty_0((-T_0,T_0))$ and $\chi=1$ in a neighbourhood of $0$, where $T_0$ is the number from Remark~\ref{RE:propagator}. Then for every $f$ in $C_0^\infty((-\eta,\eta))$ we have that
  \begin{equation*}
   \Big|  \Tr\big[\OpW(\theta)f(H_{\hbar,\mu,\varepsilon})  \mathcal{F}_{\hbar}^{-1}[\chi](H_{\hbar,\mu,\varepsilon}-s)\big]  - \frac{1}{(2\pi\hbar)^{d}}  f(s)  \int_{\{a_{\varepsilon,0}=s\}} \frac{\theta }{\abs{\nabla{a_{\varepsilon,0}}}} \,dS_s\Big| \leq C\hbar^{2-d},
  \end{equation*}
  where $a_{\varepsilon,0}(x,p)=(p- \mu a(x))^2  + V_\varepsilon(x) $ and $S_s$ is the Euclidean surface measure on the surface $\{a_{\varepsilon,0}(x,p)=s\}$.  The error term is uniform with respect to
  $s \in (-\eta,\eta)$ but the constant $C$ depends on the dimension $d$, the numbers $\mu_0$, $\norm{\partial_x^\alpha\partial_p^\beta\theta}_{L^\infty(\R^d)}$ for all $\alpha,\beta\in N_0^d$,  $\norm{\partial^\alpha a_j}_{L^\infty(\R^d)}$ for all $\alpha\in\N_0^d$ and $j\in\{1\dots,d\}$,   $\norm{V}_{L^\infty(\supp(\theta))}$ and the numbers $C_\alpha$ from Assumption~\ref{Assumption:Rough_schrodinger}.
\end{thm}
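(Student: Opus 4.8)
\emph{Strategy.} The plan is the classical one: express $\mathcal{F}_\hbar^{-1}[\chi](H_{\hbar,\mu,\varepsilon}-s)$ through the Schr\"odinger propagator, replace $f(H_{\hbar,\mu,\varepsilon})$ by its $\hbar$-$\Psi$DO expansion, reduce the trace to a phase-space oscillatory integral, and evaluate it by stationary phase; all bounds are to be kept uniform in $s\in(-\eta,\eta)$ and in the roughness parameter $\varepsilon$.

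\emph{Step 1 (reduction to the propagator).} Since $\chi\in C_0^\infty((-T_0,T_0))$, Fourier inversion gives
\[
\mathcal{F}_\hbar^{-1}[\chi](H_{\hbar,\mu,\varepsilon}-s)=\frac{1}{2\pi\hbar}\int_{\R}e^{-i\hbar^{-1}st}\,\chi(t)\,e^{i\hbar^{-1}tH_{\hbar,\mu,\varepsilon}}\,dt ,
\]
an absolutely convergent Bochner integral of bounded operators with $\norm{\mathcal{F}_\hbar^{-1}[\chi](H_{\hbar,\mu,\varepsilon}-s)}_{\mathrm{op}}\le(2\pi\hbar)^{-1}\norm{\chi}_{L^1}$. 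By Theorem~\ref{THM:func_calc} and Theorem~\ref{THM:composition-weyl-thm}, for $N$ as large as we wish we write
\[
\OpW(\theta)f(H_{\hbar,\mu,\varepsilon})=\sum_{j=0}^{N_\delta}\hbar^j\OpW(b_{\varepsilon,j})+\hbar^{N_\delta+1}\widetilde{\mathcal{R}}_\varepsilon ,
\]
where $b_{\varepsilon,0}=\theta\,f(a_{\varepsilon,0})$, $b_{\varepsilon,1}$ is a constant multiple of $f'(a_{\varepsilon,0})\{\theta,a_{\varepsilon,0}\}$, and in general each $b_{\varepsilon,j}$ is a finite sum of terms $(\text{a derivative of }\theta)\cdot f^{(k)}(a_{\varepsilon,0})$ with $k\le 2j-1$; in particular $\supp b_{\varepsilon,j}\subset\supp\theta\cap a_{\varepsilon,0}^{-1}((-\eta,\eta))$. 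Because $\theta$ has compact support, so do all these symbols, and Remark~\ref{Re:composition-weyl-thm} gives $\hbar^{N_\delta+1}\norm{\widetilde{\mathcal{R}}_\varepsilon}_1\le C_N\hbar^{N-d}$; combined with the bound on $\norm{\mathcal{F}_\hbar^{-1}[\chi](H_{\hbar,\mu,\varepsilon}-s)}_{\mathrm{op}}$ the remainder contributes at most $C_N\hbar^{N-1-d}$ to the trace, which for $N$ large is $\le C\hbar^{2-d}$. It thus remains to analyse $\hbar^j\Tr[\OpW(b_{\varepsilon,j})\mathcal{F}_\hbar^{-1}[\chi](H_{\hbar,\mu,\varepsilon}-s)]$ for $0\le j\le N_\delta$.

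\emph{Step 2 (the propagator trace and stationary phase).} Fix a symbol $b$ with $\supp b\subset\supp\theta\cap a_{\varepsilon,0}^{-1}((-\eta,\eta))$. For $\hbar$ small the non-critical hypothesis forces $|p-\mu a(x)|^2=a_{\varepsilon,0}(x,p)-V_\varepsilon(x)\ge c/2$ on $\supp b$, hence $|\nabla a_{\varepsilon,0}|\ge|\nabla_p a_{\varepsilon,0}|=2|p-\mu a(x)|\ge\sqrt{2c}$ there, uniformly in $\hbar$ and $\varepsilon$; so the level sets $\{a_{\varepsilon,0}=\lambda\}$ are smooth hypersurfaces near $\supp b$ with controlled geometry, the Hamiltonian vector field $X_{a_{\varepsilon,0}}$ has no zero there, and the non-critical condition of Remark~\ref{RE:propagator} holds. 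By that remark, on $[-T_0,T_0]$ the operator $e^{i\hbar^{-1}tH_{\hbar,\mu,\varepsilon}}$ agrees, modulo an $O(\hbar^N)$ operator-norm error, with an explicit Fourier integral operator $U_N(t)$ whose phase $\psi(t,x,\xi)$ solves $\partial_t\psi=a_{\varepsilon,0}(x,\nabla_x\psi)$, $\psi(0,x,\xi)=\langle x,\xi\rangle$, and whose amplitude solves the associated transport equations. Composing the kernel of $\OpW(b)$ with that of $U_N(t)$, integrating out the spatial variable and running a stationary phase in the remaining momentum variables (the Weyl midpoint rule produces the usual $\tfrac{\hbar}{2i}\sum_j\partial_{\xi_j}\partial_{x_j}b$ correction), one obtains, for $|t|\le T_0$,
\[
\Tr\big[\OpW(b)\,e^{i\hbar^{-1}tH_{\hbar,\mu,\varepsilon}}\big]=\frac{1}{(2\pi\hbar)^{d}}\int_{\R^{2d}}e^{i\hbar^{-1}(\psi(t,x,\xi)-\langle x,\xi\rangle)}\,\sigma_b(t,x,\xi;\hbar)\,dx\,d\xi+O(\hbar^{2-d}),
\]
with $\sigma_b=b\,u_0+\hbar\,\sigma_{b,1}+\dots$, $u_0(0,\cdot)=1$, and $\psi(t,x,\xi)-\langle x,\xi\rangle=t\Phi(t,x,\xi)$, $\Phi(0,\cdot)=a_{\varepsilon,0}$ (the trace-norm control of the error uses that $b$ has compact support). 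Inserting this into $\tfrac{1}{2\pi\hbar}\int\chi(t)e^{-i\hbar^{-1}st}(\cdot)\,dt$ produces a single oscillatory integral over $(t,x,\xi)$ with phase $\psi(t,x,\xi)-\langle x,\xi\rangle-st$. Its critical set lies in $\{t=0\}\cap\{a_{\varepsilon,0}=s\}$: there is no critical point for $0<|t|<T_0$ since $X_{a_{\varepsilon,0}}$ has no fixed point on $\supp b$ and $T_0$ is small, while at $t=0$ the critical equation reduces to $a_{\varepsilon,0}(x,\xi)=s$. As $\nabla a_{\varepsilon,0}\neq0$ there, this is a clean critical manifold of codimension two, and stationary phase along it (combined with the coarea formula) yields
\[
\Tr[\OpW(\theta)f(H_{\hbar,\mu,\varepsilon})\mathcal{F}_\hbar^{-1}[\chi](H_{\hbar,\mu,\varepsilon}-s)]=\frac{1}{(2\pi\hbar)^{d}}\int_{\{a_{\varepsilon,0}=s\}}\frac{\widetilde b_{\varepsilon,0}+\hbar\,\widetilde b_{\varepsilon,1}}{|\nabla a_{\varepsilon,0}|}\,dS_s+O(\hbar^{2-d}),
\]
where $\widetilde b_{\varepsilon,0}=\theta\,f(a_{\varepsilon,0})$ (so that, since $a_{\varepsilon,0}=s$ on the surface, the $O(\hbar^{-d})$ term is exactly $\tfrac{f(s)}{(2\pi\hbar)^{d}}\int_{\{a_{\varepsilon,0}=s\}}\theta\,|\nabla a_{\varepsilon,0}|^{-1}\,dS_s$, the claimed main term), and $\widetilde b_{\varepsilon,1}$ collects the $j=1$ contribution $b_{\varepsilon,1}$ together with the $\hbar$-corrections of the stationary phase and of the propagator amplitude.

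\emph{Step 3 (the subprincipal term vanishes) and conclusion.} It remains to see that $\int_{\{a_{\varepsilon,0}=s\}}\widetilde b_{\varepsilon,1}\,|\nabla a_{\varepsilon,0}|^{-1}\,dS_s=0$, so that the a priori $O(\hbar^{1-d})$ term is absent. Let $d\mu_s=|\nabla a_{\varepsilon,0}|^{-1}\,dS_s$ be the Liouville measure on $\{a_{\varepsilon,0}=s\}$, which is invariant under the Hamiltonian flow $\Phi_r$ of $a_{\varepsilon,0}$. For any $g\in C_0^\infty(\R^{2d})$,
\[
\int_{\{a_{\varepsilon,0}=s\}}(X_{a_{\varepsilon,0}}g)\,d\mu_s=\frac{d}{dr}\Big|_{r=0}\int_{\{a_{\varepsilon,0}=s\}}(g\circ\Phi_r)\,d\mu_s=\frac{d}{dr}\Big|_{r=0}\int_{\{a_{\varepsilon,0}=s\}}g\,d\mu_s=0 .
\]
Taking $g=\theta$ kills the contribution of $b_{\varepsilon,1}$, which on the surface equals a constant times $f'(s)\{\theta,a_{\varepsilon,0}\}=f'(s)\,X_{a_{\varepsilon,0}}\theta$; and the remaining pieces of $\widetilde b_{\varepsilon,1}$ — which, by the Weyl structure of the calculus (reflected in $a^f_{\varepsilon,1}=0$ in Theorem~\ref{THM:func_calc}) and the form of the transport equations, likewise organise on the energy surface into total derivatives along $X_{a_{\varepsilon,0}}$ — vanish for the same reason. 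This gives the stated estimate; uniformity in $s\in(-\eta,\eta)$ and in $\varepsilon$, and the asserted dependence of the constant on $d$, $\mu_0$, the derivatives of $\theta$ and of the $a_j$, $\norm{V}_{L^\infty(\supp\theta)}$ and the constants $C_\alpha$ of Assumption~\ref{Assumption:Rough_schrodinger}, are inherited from the corresponding uniform constants in Theorems~\ref{THM:func_calc} and~\ref{THM:composition-weyl-thm} and in the parametrix of Remark~\ref{RE:propagator}.

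\emph{Main obstacle.} The delicate point is Step~2: fusing the functional-calculus expansion with the Fourier-integral-operator parametrix, performing the phase-space stationary phase while verifying cleanness of the critical manifold $\{t=0\}\cap\{a_{\varepsilon,0}=s\}$ and the absence of spurious critical points for $0<|t|<T_0$, and tracking every error uniformly in $\varepsilon$ down to $O(\hbar^{2-d})$ — in particular, isolating the $O(\hbar^{1-d})$ contribution precisely enough to recognise it, on the energy surface, as a total Hamiltonian derivative and hence zero.
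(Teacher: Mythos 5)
The paper does not prove this theorem at all: it cites it directly as a special case of \cite[Theorem~6.1]{mikkelsen2022optimal}, with the only added observation being that the non-critical condition stated here (on $|\nu-V_\varepsilon(x)|$) is, via \eqref{EQ:non_critial_assumption_com_1}, equivalent to the condition $|\nabla_p a_{\varepsilon,0}|\geq c'$ used in the cited reference. Your proposal therefore takes a genuinely different route, namely reconstructing the argument from scratch. Since the cited theorem is itself proved by the standard FIO-parametrix and stationary-phase machinery, what you give is essentially a sketch of the omitted proof rather than an alternative to it.

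On the substance: Steps 1 and 2 are sound in outline. The reduction via Theorems~\ref{THM:func_calc} and~\ref{THM:composition-weyl-thm}, the trace-norm treatment of the remainder (Remark~\ref{Re:composition-weyl-thm} is the right tool, since $\theta$ is compactly supported), the observation that the non-critical hypothesis forces $|\nabla a_{\varepsilon,0}|\geq\sqrt{2c}$ on $\supp b_{\varepsilon,j}\subset\supp\theta\cap a_{\varepsilon,0}^{-1}((-\eta,\eta))$, the absence of critical points of the phase for $0<|t|<T_0$, and the cleanness of the codimension-two critical manifold $\{t=0,\,a_{\varepsilon,0}=s\}$ at $t=0$ are all correctly identified. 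You also correctly locate the only genuinely delicate point, which is isolating and cancelling the $O(\hbar^{1-d})$ term.

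That said, Step~3 contains a real gap. The Liouville-invariance argument $\int_{\{a_{\varepsilon,0}=s\}}(X_{a_{\varepsilon,0}}g)\,d\mu_s=0$ cleanly disposes of the piece $b_{\varepsilon,1}\propto f'(a_{\varepsilon,0})\{\theta,a_{\varepsilon,0}\}$ coming from composing $\OpW(\theta)$ with $\OpW(f(a_{\varepsilon,0}))$, and the vanishing $a^f_{\varepsilon,1}=0$ from Theorem~\ref{THM:func_calc} kills the corresponding functional-calculus contribution. But the remaining $O(\hbar)$ corrections -- from the transport equations for the FIO amplitude, from the Weyl-midpoint expansion of the kernel, and from the sub-leading term of the stationary-phase lemma (which involves second derivatives of the amplitude and the transverse Hessian) -- are only \emph{asserted} to ``organise on the energy surface into total derivatives along $X_{a_{\varepsilon,0}}$.'' This is true, but it is exactly the computation that makes the theorem nontrivial, and it requires an explicit identification of those corrections; a reader cannot verify it from what you have written. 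Relatedly, the uniformity of every constant in the rough parameter $\varepsilon$ (i.e.\ that derivatives of $V_\varepsilon$ enter only through the combinations $\varepsilon^{\min(0,\tau-|\alpha|)}$ controlled by Assumption~\ref{Assumption:Rough_schrodinger}, with $\tau\geq 2$ used precisely to get the error down to $\hbar^{2-d}$) is inherited from the rough parametrix of Remark~\ref{RE:propagator} but not tracked in your estimates; since this is the whole point of working with the rough calculus, it should at least be flagged at each step where a derivative of $V_\varepsilon$ appears.
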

This theorem is a special case of  \cite[Theorem~{6.1}]{MR4689394}. One thing to observe is that in the formulation of Theorem~{6.1} the assumption on the principal symbol $a_{\varepsilon,0}$ is 
  \begin{equation*}
    |\nabla_p a_{\varepsilon,0}(x,p)| \geq c \quad\text{for all } (x,p)\in a_{\varepsilon,0}^{-1}([-2\eta,2\eta]).
  \end{equation*}
This is technically the same assumption as the one we make in Theorem~\ref{THM:Expansion_of_trace} up to a square root. To see this note that we here have that $a_{\varepsilon,0}(x,p)= (p^2-\mu a(x))^2 + V_\varepsilon(x)$. Hence we have that
\begin{equation}\label{EQ:non_critial_assumption_com_1}
	 |\nabla_p a_{\varepsilon,0}(x,p)|^2 = 4(p-\mu a(x))^2 = 4(\nu -V_\varepsilon(x))
\end{equation} 
for all $(x,p)\in\R^{2d}$ such that  $a_{\varepsilon,0}(x,p)=\nu$. From \eqref{EQ:non_critial_assumption_com_1} we see that the two assumptions are indeed equivalent. Furthermore, if we had assumed the operator was of regularity $1$ we can obtain an error that is slightly better than $\hbar^{1-d}$ but not $\hbar^{2-d}$.

Before we continue we will need the following remark to set some notation and the following Proposition that is a type of Tauberian result.
\begin{remark}\label{RE:mollyfier_def}
Let $T\in(0,\min(T_0,T_1)]$, where $T_0$ is the number from Remark~\ref{RE:propagator} and $T_1$ is the number from Lemma~\ref{LE:localisation_propagator}. With this number $T$, let $\hat{\chi}\in C_0^\infty((-T,T))$ be a real valued function such that $\hat{\chi}(s)=\hat{\chi}(-s)$ and $\hat{\chi}(s)=1$ for all $t\in(-\frac{T}{2},\frac{T}{2})$. Define
\begin{equation*}
	\chi_1(t) = \frac{1}{2\pi} \int_{\R} \hat{\chi}(s) e^{ist} \,ds.
\end{equation*}
We assume that $\chi_1(t)\geq 0$ for all $t\in\R$ and there exist $T_3\in(0,T)$ and $c>0$ such that $\chi_1(t)\geq c$ for all $t\in[-T_3,T_3]$. We can guarantee these assumptions by (possible) replacing $\hat{\chi}$ by $\hat{\chi}*\hat{\chi}$. We will by $\chi_\hbar(t)$ denote the function
\begin{equation*}
	\chi_\hbar(t) = \tfrac{1}{\hbar} \chi_1(\tfrac{t}{\hbar}) = \mathcal{F}_\hbar^{-1}[\hat{\chi}](t).
\end{equation*}
Moreover for any function $g\in L^1_{loc}(\R)$ we will use the notation
\begin{equation*}
	g^{(\hbar)}(t) =g*\chi_\hbar(t) = \int_\R g(s) \chi_\hbar(t-s).
\end{equation*}
\end{remark}
\begin{proposition}\label{PRO:Tauberian}
Let $A$ be a self-adjoint operator acting in the Hilbert space $L^2(\R^d)$ and $g\in C_{0}^{\infty,\gamma}(\R)$. Let $\chi_1$ be defined as in Remark~\ref{RE:mollyfier_def}. If for a Hilbert-Schmidt operator $B$
\begin{equation}\label{EQ:PRO:Tauberian_1}
	\sup_{t\in \mathcal{D}(\delta)} \norm { B^{*}\chi_\hbar (A-t) B}_1 \leq Z(\hbar),
\end{equation}
where $\mathcal{D}(\delta) = \{ t \in\R \,|\, \dist(\supp(g)),t)\leq \delta \}$, $Z(\beta)$ is some positive function and $\delta$ is a strictly positive number. Then it holds that
\begin{equation}
	\norm { B^{*}(g(A)-g^{(\hbar)}(A)) B}_1 \leq C \hbar^{1+\gamma} Z(\hbar) + C_{N}' \hbar^N \norm{B^{*}B}_1 \quad\text{for all $N\in\N$},
\end{equation}
where the constants $C$ and $C'$ depend on the number $\delta$ and the functions $g$ and $\chi_1$ only.
\end{proposition}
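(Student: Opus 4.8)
The plan is to reduce the statement about $g(A)-g^{(\hbar)}(A)$ to the hypothesis \eqref{EQ:PRO:Tauberian_1}, which controls $B^*\chi_\hbar(A-t)B$ uniformly in $t$ near $\supp(g)$, by writing $g - g^{(\hbar)}$ as an integral of $\chi_\hbar(\cdot - t)$ against a suitable density and then estimating the trace norm. First I would recall that $g^{(\hbar)} = g * \chi_\hbar$, so that $g(A) - g^{(\hbar)}(A) = \int_\R (g(t) - g(s))\,\chi_\hbar(t-s)\,ds$ evaluated at $t = A$; the standard trick is to integrate by parts so that the difference becomes an integral of the spectral measure of $A$ tested against $g$ and a primitive of $\chi_\hbar$. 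Concretely, writing $\psi_\hbar$ for a primitive of $\chi_\hbar$ normalized so that $\psi_\hbar(-\infty)=0$, one obtains an identity of the form $g(A) - g^{(\hbar)}(A) = -\int_\R g'(s)\,\big(\mathbbm{1}_{(-\infty,s]}(A) - (\mathbbm{1}_{(-\infty,\cdot]} * \chi_\hbar)(s)(A)\big)\,ds$ or, more usefully here, one keeps everything at the level of $\chi_\hbar$: $g(A)-g^{(\hbar)}(A) = \int\!\!\int (g(s+\hbar u)-g(s))\chi_1(u)\,du$, then Taylor-expands $g$ in the first variable. The point of the class $C_0^{\infty,\gamma}$ is precisely that $g$ is Lipschitz-like with a $|t|^{\gamma-1}$-type control on $g'$ near $0$, and $\chi_1$ has all moments finite with rapid decay, so the $u$-integral converges and produces the gain $\hbar^{1+\gamma}$.

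The key steps, in order, would be: (i) establish the operator identity expressing $B^*(g(A)-g^{(\hbar)}(A))B$ as a $t$-integral (over a bounded region $\mathcal{D}(\delta)$ up to a rapidly decaying tail) of $B^*\chi_\hbar(A-t)B$ against an explicit scalar kernel $K_\hbar(t)$ built from $g$ and $\chi_1$; (ii) bound $\int |K_\hbar(t)|\,dt \leq C\hbar^{1+\gamma}$, which is the genuinely analytic part and uses the defining estimates of $C_0^{\infty,\gamma}(\R)$ together with the moment bounds on $\chi_1$ — here one separates the region $|t|\gtrsim$ const, where $g$ is smooth with bounded derivatives so one gets $O(\hbar)$ or better, from the region near $t=0$ where the singular behaviour $|t|^{\gamma - m}$ of the derivatives of $g$ must be balanced against the $\hbar$-scaling of $\chi_\hbar$; (iii) take trace norms inside the integral, apply the hypothesis \eqref{EQ:PRO:Tauberian_1} to get the factor $Z(\hbar)$ on $\mathcal{D}(\delta)$, and handle the contribution of $t\notin\mathcal{D}(\delta)$ using the rapid decay of $\chi_\hbar$ away from the diagonal — this is where the $C_N'\hbar^N\norm{B^*B}_1$ term comes from, since for such $t$ one crudely bounds $\norm{B^*\chi_\hbar(A-t)B}_1 \leq \norm{\chi_\hbar(A-t)}_{\mathrm{op}}\norm{B^*B}_1$ and uses $|\chi_\hbar(A-t)| \leq \norm{\chi_1(\hbar^{-1}(\cdot-t))}_{L^\infty}$... no, rather one uses that $\mathrm{dist}(t,\supp g)\geq\delta$ forces, after the integration-by-parts representation, a factor $\int_{|u|\geq\delta/\hbar}|\chi_1(u)||u|^k\,du = O(\hbar^N)$ from the Schwartz decay of $\chi_1$.

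The main obstacle I expect is step (ii): getting the clean exponent $\hbar^{1+\gamma}$ uniformly across $\gamma\in[0,1]$, including the two endpoint conventions ($\gamma=0$ with $g=\mathbbm{1}_{(-\infty,0]}$-type behaviour patched to be in the class, and $\gamma=1$). For $\gamma\in(0,1)$ the derivative bound $|g^{(m)}(t)|\leq C_m|t|^{\gamma-m}$ is exactly the homogeneity needed so that after rescaling $t=\hbar\sigma$ the integral $\int |\sigma|^{\gamma-m}|\chi_1^{(\cdot)}(\sigma)|\,d\sigma$ converges for $m$ large, yielding $\hbar^{1+\gamma}$; for $\gamma=0,1$ the derivatives are merely bounded near $0$ and one argues slightly differently, splitting at scale $\hbar$ and estimating the near-diagonal piece by its measure $O(\hbar)$ (for $\gamma=0$) or using one more order of Taylor expansion plus boundedness of $g'$ and a moment of $\chi_1$ (for $\gamma=1$). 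Carrying this out carefully while keeping the constants dependent only on $\delta$, $g$, and $\chi_1$ — in particular independent of $A$ and $B$ — is the bookkeeping-heavy heart of the argument, but conceptually it is the standard one-term Tauberian estimate adapted to the trace-norm-with-framing-operators setting, so no new ideas beyond careful splitting of the integration region should be required. I would also double-check that the Helffer–Sjöstrand or direct spectral-measure representation makes $t\mapsto B^*\chi_\hbar(A-t)B$ measurable and integrable so that Fubini applies when pulling the trace norm inside.
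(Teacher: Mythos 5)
The paper does not prove this proposition: it is quoted verbatim from \cite{MR1343781} (Proposition~2.6 there, originally from \cite{MR1272980} for $\gamma\in(0,1]$), so there is no internal proof to compare against. Judged on its own merits, your sketch has a genuine gap at its very first step. The ``operator identity'' you propose in (i), namely $B^{*}(g(A)-g^{(\hbar)}(A))B=\int K_\hbar(t)\,B^{*}\chi_\hbar(A-t)B\,dt$ plus a tail, cannot hold: it would require $g-g*\chi_\hbar=K_\hbar*\chi_\hbar$, i.e.\ a deconvolution by $\chi_\hbar$, which is impossible because $\widehat{\chi_1}=\hat{\chi}$ has compact support while $g-g*\chi_\hbar$ carries the high frequencies of $g$. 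What the argument actually needs is a \emph{domination}, not a representation: since $B^{*}E(d\lambda)B\geq 0$, one has $\norm{B^{*}h(A)B}_1\leq\int|h(\lambda)|\,dF(\lambda)$ with $F(\lambda)=\Tr[B^{*}\boldsymbol{1}_{(-\infty,\lambda)}(A)B]$ nondecreasing, and the hypothesis \eqref{EQ:PRO:Tauberian_1} must first be converted into the Tauberian bound $F(t+\hbar T_1)-F(t-\hbar T_1)\leq C\hbar Z(\hbar)$. That conversion is exactly where the properties $\chi_1\geq0$ and $\chi_1\geq c>0$ on $[-T_1,T_1]$ imposed in Remark~\ref{RE:mollyfier_def} enter ($Z(\hbar)\geq\Tr[B^{*}\chi_\hbar(A-t)B]=\int\chi_\hbar(\lambda-t)\,dF(\lambda)\geq c\hbar^{-1}\int_{|\lambda-t|\leq\hbar T_1}dF$). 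Your proposal never uses either property, which is the telltale sign that the genuinely Tauberian content of the statement is missing.

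This omission is fatal precisely in the region you identify as delicate. For $\gamma=0$ the class $C_0^{\infty,0}$ contains functions with a jump at $0$ (only $\gamma>0$ forces continuity), so $|g-g^{(\hbar)}|$ is of order $1$ on an interval of length $\sim\hbar$ around $0$ and no Taylor expansion or moment cancellation of $\chi_1$ helps there. Your fallback of ``estimating the near-diagonal piece by its measure $O(\hbar)$'' is correct only after Lebesgue measure is replaced by the spectral measure $dF$, and the bound $dF(\{|\lambda|\lesssim\hbar\})\leq C\hbar Z(\hbar)$ is available only via the Tauberian step above; combined with $|g-g^{(\hbar)}|\lesssim\min(1,|\lambda|^{\gamma})\cdot(\hbar/(\hbar+|\lambda|))^{N}$ and a dyadic summation over $|\lambda|\sim 2^{k}\hbar$ this yields $\hbar^{1+\gamma}Z(\hbar)$. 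The parts of your sketch concerning the smooth region and the far tail (rapid decay of $\chi_1$ giving the $C_N'\hbar^{N}\norm{B^{*}B}_1$ term, via $\norm{B^{*}h(A)B}_1\leq\norm{h}_{L^\infty}\norm{B^{*}B}_1$) are fine, but without the positivity-based control of $dF$ at scale $\hbar$ the main term cannot be obtained.
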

The Proposition is taken from \cite{MR1343781}, where it is Proposition~{2.6}. It first appeared in \cite{MR1272980} for $\gamma\in(0,1]$. To apply this proposition
we will establish a case, where we have a bound of the type \eqref{EQ:PRO:Tauberian_1} from Proposition~\ref{PRO:Tauberian}.
\begin{lemma}\label{LE:verification_of_assumption}
 Let $H_{\hbar,\mu,\varepsilon}$ be a rough Schr\"odinger operator of regularity
  $\tau \geq 2$ acting in $L^2(\R^d)$ with $\hbar$ in $(0,\hbar_0]$ and $\mu\in[0,\mu_0]$ which  satisfies
  Assumption~\ref{Assumption:Rough_schrodinger}. Suppose there exists a $\delta$
  in $(0,1)$ such that $\varepsilon\geq\hbar^{1-\delta}$. Assume that $\theta\in C_0^\infty(\R^{2d})$ and there exists two constants  $\eta,c>0$ such that
  \begin{equation*}
  	|\nu -V_\varepsilon(x)| + \hbar^{\frac{2}{3}} \geq c \qquad\text{for all $(x,p)\in \supp(\theta)$ and $\nu\in(-2\eta,2\eta)$}.
  \end{equation*}
   Let $\chi_\hbar$ be the function from Remark~\ref{RE:mollyfier_def}. Then for every $f$ in $C_0^\infty((-\eta,\eta))$ we have that
  \begin{equation*}
  \norm{\OpW(\theta)f(H_{\hbar,\mu,\varepsilon}) \chi_\hbar(H_{\hbar,\mu,\varepsilon}-s) f(H_{\hbar,\mu,\varepsilon}) \OpW(\theta)}_1 \leq C\hbar^{-d},
  \end{equation*}
  where the constant depends on the dimension, the numbers $\mu_0$, $\norm{\partial_x^\alpha\partial_p^\beta\theta}_{L^\infty(\R^d)}$ for all $\alpha,\beta\in N_0^d$,  $\norm{\partial^\alpha a_j}_{L^\infty(\R^d)}$ for all $\alpha\in\N_0^d$ and $j\in\{1\dots,d\}$,   $\norm{V_\varepsilon}_{L^\infty(\supp(\theta))}$, the $C_\alpha$ for all $\alpha\in\N_0^d$ from Assumption~\ref{Assumption:Rough_schrodinger} and $\norm{\partial^\alpha f}_{L^\infty(\R^d)}$ for all $\alpha\in\N_0$.
\end{lemma}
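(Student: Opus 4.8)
The plan is to reduce the trace-norm bound to the already-established expansion of the trace in Theorem~\ref{THM:Expansion_of_trace}. The operator $\chi_\hbar(H_{\hbar,\mu,\varepsilon}-s)$ is, by definition in Remark~\ref{RE:mollyfier_def}, equal to $\mathcal{F}_\hbar^{-1}[\hat\chi](H_{\hbar,\mu,\varepsilon}-s)$, and the crucial point is that $\chi_1$ was arranged to be \emph{nonnegative} on $\R$ (after replacing $\hat\chi$ by $\hat\chi*\hat\chi$). Hence $\chi_\hbar(H_{\hbar,\mu,\varepsilon}-s)$ is a nonnegative operator, and one may write $\chi_\hbar(H_{\hbar,\mu,\varepsilon}-s) = C_\hbar(s)^* C_\hbar(s)$ with $C_\hbar(s) = \chi_\hbar(H_{\hbar,\mu,\varepsilon}-s)^{1/2}$. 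Then the operator whose trace norm we must bound is
\[
 \OpW(\theta) f(H)\, C_\hbar(s)^* C_\hbar(s)\, f(H)\OpW(\theta) = \big(C_\hbar(s) f(H)\OpW(\theta)\big)^* \big(C_\hbar(s) f(H)\OpW(\theta)\big),
\]
which is nonnegative, so its trace norm equals its trace, and that trace equals $\Tr\big[\OpW(\theta) f(H) \chi_\hbar(H-s) f(H)\OpW(\theta)\big]$. (Here I abbreviate $H=H_{\hbar,\mu,\varepsilon}$.) Thus it suffices to bound this trace, and then by cyclicity of the trace and the fact that $\OpW(\theta)$, $f(H)$ are bounded, it suffices to estimate $\big|\Tr[\OpW(\theta)^2 f(H)^2 \chi_\hbar(H-s)]\big|$ — or, more flexibly, $\big|\Tr[\OpW(\tilde\theta) f(H) \chi_\hbar(H-s)]\big|$ for a suitable symbol $\tilde\theta\in C_0^\infty(\R^{2d})$.

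To make the connection with Theorem~\ref{THM:Expansion_of_trace} clean, I would first absorb the factor $f(H)^2$: using Theorem~\ref{THM:func_calc}, $f(H)^2$ is, up to an operator-norm remainder of size $O(\hbar^\infty)$, a rough $\hbar$-$\Psi$DO with principal symbol $f(a_{\varepsilon,0})^2$ supported where $|a_{\varepsilon,0}|<\eta$; composing with $\OpW(\theta)$ via Theorem~\ref{THM:composition-weyl-thm} produces a rough $\hbar$-$\Psi$DO $\OpW(\tilde\theta)$ (plus, via Remark~\ref{Re:composition-weyl-thm}, a trace-norm-negligible $O(\hbar^N)$ error), where $\tilde\theta\in C_0^\infty(\R^{2d})$ still satisfies the non-critical condition $|\nu - V_\varepsilon(x)|+\hbar^{2/3}\ge c$ on its support. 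Then $\chi_\hbar(H-s)$ is exactly $\mathcal{F}_\hbar^{-1}[\hat\chi](H-s)$, with $\hat\chi\in C_0^\infty((-T,T))$ and $T\le T_0$; writing $\hat\chi$ in the role of the cut-off $\chi$ in Theorem~\ref{THM:Expansion_of_trace} (possibly first splitting $\hat\chi = \hat\chi\,\psi$ for some $\psi\in C_0^\infty((-T_0,T_0))$ equal to $1$ near $0$, to match the hypothesis that $\chi=1$ near $0$), Theorem~\ref{THM:Expansion_of_trace} gives
\[
 \Big|\Tr\big[\OpW(\tilde\theta) f(H)\, \mathcal{F}_\hbar^{-1}[\hat\chi](H-s)\big] - \frac{1}{(2\pi\hbar)^d} f(s)\!\!\int_{\{a_{\varepsilon,0}=s\}}\!\! \frac{\tilde\theta}{|\nabla a_{\varepsilon,0}|}\,dS_s\Big| \le C\hbar^{2-d}.
\]
The leading term is $O(\hbar^{-d})$ — the surface integral is bounded uniformly in $s$ and $\varepsilon$ because on $\supp(\tilde\theta)$ the non-critical condition forces $|\nabla_p a_{\varepsilon,0}|^2 = 4(\nu-V_\varepsilon(x))$ bounded below (by \eqref{EQ:non_critial_assumption_com_1}, on the relevant energy shell) away from the region excluded by the $\hbar^{2/3}$ slack, and the surface has bounded measure since $\tilde\theta$ is compactly supported — and the remainder is $O(\hbar^{2-d})$, so the whole trace is $O(\hbar^{-d})$, uniformly in $s\in(-\eta,\eta)$. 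Finally I would note that the function $s\mapsto$ (trace) is supported in a bounded $s$-interval, so the supremum over $s$ is harmless, and reading off the constant's dependence from Theorem~\ref{THM:Expansion_of_trace} and Theorem~\ref{THM:composition-weyl-thm} gives exactly the list of quantities stated.

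The main obstacle is the bookkeeping needed to legitimately pass from the four-factor sandwich $\OpW(\theta) f(H)\chi_\hbar(H-s) f(H)\OpW(\theta)$ to the two-factor trace handled by Theorem~\ref{THM:Expansion_of_trace}: one must use the nonnegativity of $\chi_\hbar(H-s)$ to turn the trace norm into a trace, then use cyclicity together with the $\hbar$-$\Psi$DO calculus (Theorems~\ref{THM:func_calc}, \ref{THM:composition-weyl-thm} and Remark~\ref{Re:composition-weyl-thm}) to merge $\OpW(\theta) f(H)^2 \OpW(\theta)$ into a single $\OpW(\tilde\theta)$ modulo trace-norm-negligible errors, all while verifying that $\tilde\theta$ inherits compact support and the non-critical condition so that Theorem~\ref{THM:Expansion_of_trace} applies. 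A minor additional point is handling the mismatch between the hypothesis "$\chi=1$ near $0$'' in Theorem~\ref{THM:Expansion_of_trace} and the general $\hat\chi\in C_0^\infty((-T,T))$ here; this is resolved by the standard device of multiplying $\hat\chi$ by a fixed plateau cut-off and absorbing the difference (whose semiclassical Fourier transform is $O(\hbar^\infty)$ together with all derivatives, away from a bounded set) into the $O(\hbar^N)$ error via Theorems~\ref{THM:cal-val-thm}--\ref{THM:thm_est_tr}. Once these reductions are in place, no further genuine analysis is required — the estimate is entirely a consequence of the cited results.
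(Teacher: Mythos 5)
Your proposal follows essentially the same route as the paper's own proof: use the nonnegativity of $\chi_\hbar$ to convert the trace norm into a trace, use cyclicity to bring the two $\OpW(\theta)$ factors together, replace $\OpW(\theta)\OpW(\theta)$ by $\OpW(\theta^2)$ modulo a trace-norm error of size $O(\hbar^{-d})$ via Theorem~\ref{THM:composition-weyl-thm} and Remark~\ref{Re:composition-weyl-thm}, and then invoke Theorem~\ref{THM:Expansion_of_trace} to obtain the $O(\hbar^{-d})$ bound.

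The one place where you deviate — and where your account becomes internally inconsistent — is the middle reduction. You propose to also fold $f(H)^2$ into a single symbol $\tilde\theta$ before citing Theorem~\ref{THM:Expansion_of_trace}, but that theorem already carries a free factor $f\in C_0^\infty((-\eta,\eta))$ inside the trace; since $f^2$ also lies in $C_0^\infty((-\eta,\eta))$, the paper simply applies the theorem directly to $\Tr[\OpW(\theta^2)f^2(H_{\hbar,\mu,\varepsilon})\chi_\hbar(H_{\hbar,\mu,\varepsilon}-s)]$ with $\theta\rightsquigarrow\theta^2$ and $f\rightsquigarrow f^2$, and no further $\Psi$DO work is needed. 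As written, your display cites Theorem~\ref{THM:Expansion_of_trace} for $\Tr[\OpW(\tilde\theta)f(H)\mathcal{F}_\hbar^{-1}[\hat\chi](H-s)]$ even though $f(H)^2$ was supposed to have been absorbed into $\tilde\theta$; to make this step precise you would have to split $f^2=f\cdot f$ and absorb only one factor, or insert a new plateau cut-off $f_1$ with $f_1f=f$. Your worry about matching the hypothesis ``$\chi=1$ near $0$'' of Theorem~\ref{THM:Expansion_of_trace} is a fair observation (after the replacement $\hat\chi\rightsquigarrow\hat\chi*\hat\chi$ in Remark~\ref{RE:mollyfier_def} the resulting function is no longer $1$ near $0$), but the paper's proof applies Theorem~\ref{THM:Expansion_of_trace} without any such device, so the extra plateau machinery is not part of the paper's argument and you would be adding work that the paper chose to omit.
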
 
\begin{proof}
Since we assume that $\chi_\hbar(t)\geq0$ for all $t\in\R$ we have that the composition of the operators will be a positive operator and hence we have that
  \begin{equation}\label{EQ:verification_of_assumption_1}
  	\begin{aligned}
  \MoveEqLeft \norm{\OpW(\theta)f(H_{\hbar,\mu,\varepsilon}) \chi_\hbar(H_{\hbar,\mu,\varepsilon}-s) f(H_{\hbar,\mu,\varepsilon}) \OpW(\theta)}_1 
  \\
  &= \Tr[\OpW(\theta)f(H_{\hbar,\mu,\varepsilon}) \chi_\hbar(H_{\hbar,\mu,\varepsilon}-s) f(H_{\hbar,\mu,\varepsilon}) \OpW(\theta)] 
  \\
  &=  \Tr[\OpW(\theta)\OpW(\theta) f^2(H_{\hbar,\mu,\varepsilon}) \chi_\hbar(H_{\hbar,\mu,\varepsilon}-s) ], 
  \end{aligned}
  \end{equation}
where we in the last equality have used cyclicality of the trace. From applying Theorem~\ref{THM:composition-weyl-thm} and Remark~\ref{Re:composition-weyl-thm} we obtain that
  \begin{equation}\label{EQ:verification_of_assumption_2}
  	\begin{aligned}
	\MoveEqLeft  \big| \Tr[\OpW(\theta)\OpW(\theta) f^2(H_{\hbar,\mu,\varepsilon}) \chi_\hbar(H_{\hbar,\mu,\varepsilon}-s) ] \big| 
	\\
	&\leq  \big| \Tr[\OpW(\theta^2) f^2(H_{\hbar,\mu,\varepsilon}) \chi_\hbar(H_{\hbar,\mu,\varepsilon}-s) ]\big| +C\hbar^{-d}, 
  \end{aligned}
  \end{equation}
where the constant $C$ depends on the numbers $\norm{\partial^\alpha_x \partial^\beta_p \theta}_{L^\infty(\R^d)}$ and $\norm{f}_{L^\infty(\R)}$. Applying Theorem~\ref{THM:Expansion_of_trace} we get that
  \begin{equation}\label{EQ:verification_of_assumption_3}
  	\begin{aligned}
   \big| \Tr[\OpW(\theta^2) f^2(H_{\hbar,\mu,\varepsilon}) \chi_\hbar(H_{\hbar,\mu,\varepsilon}-s) ]\big| \leq C\hbar^{-d}, 
  \end{aligned}
  \end{equation}
where the constant $C$ depends on the dimension and the numbers $\mu_0$, $\norm{\partial_x^\alpha\partial_p^\beta\theta}_{L^\infty(\R^d)}$ for all $\alpha,\beta\in N_0^d$,  $\norm{\partial^\alpha a_j}_{L^\infty(\R^d)}$ for all $\alpha\in\N_0^d$ and $j\in\{1\dots,d\}$,   $\norm{V_\varepsilon}_{L^\infty(\supp(\theta))}$, the $C_\alpha$ for all $\alpha\in\N_0^d$ from Assumption~\ref{Assumption:Rough_schrodinger} and $\norm{\partial^\alpha f}_{L^\infty(\R^d)}$ for all $\alpha\in\N_0$. Finally by combining \eqref{EQ:verification_of_assumption_1}, \eqref{EQ:verification_of_assumption_2} and \eqref{EQ:verification_of_assumption_3} we obtain the desired estimate and this concludes the proof.
\end{proof}
\begin{thm}\label{THM:Loc_rough_prob}
Let $H_{\hbar,\mu,\varepsilon}$ be a rough Schr\"odinger operator of regularity $\tau \geq 2$ acting in $L^2(\R^d)$ with $\hbar$ in $(0,\hbar_0]$ and $\mu\in[0,\mu_0]$ which  satisfies Assumption~\ref{Assumption:Rough_schrodinger}. Suppose there exists a $\delta$ in $(0,1)$ such that $\varepsilon\geq\hbar^{1-\delta}$.
Moreover, suppose there exists some $c>0$ such that 
\begin{equation*}
	|V_\varepsilon(x)| +\hbar^{\frac{2}{3}} \geq c \qquad\text{for all $x\in \Omega$}.
\end{equation*}
Then for $\gamma\in[0,1]$ and any $g \in C^{\infty,\gamma}(\R)$ and any $\theta\in C_0^\infty(\Omega\times\R^d)$ it holds that
\begin{equation*}
	  \Big|\Tr[\OpW(\theta) g(H_{\hbar,\mu,\varepsilon})] - \frac{1}{(2\pi\hbar)^d} \int_{\R^{2d}}g \big((p- \mu a(x))^2+V_\varepsilon(x)\big)\theta(x,p)  \,dx dp \Big| \leq C \hbar^{1+\gamma-d},
\end{equation*}
where the constant $C$ is depending on the dimension, $\mu_0$, the numbers $\norm{\partial_x^\alpha\partial_p^\beta\theta}_{L^\infty(\R^d)}$ for all $\alpha,\beta\in N_0^d$,  $\norm{\partial^\alpha a_j}_{L^\infty(\R^d)}$ for all $\alpha\in\N_0^d$ and $j\in\{1\dots,d\}$,   $\norm{V_\varepsilon}_{L^\infty(\Omega)}$ and the numbers $\zeta$ and $C_\alpha$ for all $\alpha\in\N_0^d$ from Assumption~\ref{Assumption:Rough_schrodinger}.
\end{thm}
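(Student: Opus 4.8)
\emph{Proof proposal.} The plan is to confine the non-smoothness of $g$ to a small energy window around $0$, where the hypothesis makes the principal symbol non-critical, and to treat that window by the Tauberian Proposition~\ref{PRO:Tauberian} fed by the local wave trace expansion of Theorem~\ref{THM:Expansion_of_trace}, while the complementary smooth part of $g$ is handled by the symbolic functional calculus of Theorem~\ref{THM:func_calc}, which is insensitive to critical points. Since $V_\varepsilon\in C_0^\infty(\R^d)$ we have $H_{\hbar,\mu,\varepsilon}\geq-\norm{V_\varepsilon}_{L^\infty(\R^d)}$ and $a_{\varepsilon,0}(x,p):=(p-\mu a(x))^2+V_\varepsilon(x)\geq-\norm{V_\varepsilon}_{L^\infty(\R^d)}$, so replacing $g$ by any element of $C_0^{\infty,\gamma}(\R)$ agreeing with it on $[-\norm{V_\varepsilon}_{L^\infty(\R^d)}-1,\infty)$ alters neither $g(H_{\hbar,\mu,\varepsilon})$ nor $g(a_{\varepsilon,0})$ on the relevant set; hence we may assume $g\in C_0^{\infty,\gamma}(\R)$. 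From $\abs{V_\varepsilon(x)}+\hbar^{\frac{2}{3}}\geq c$ on $\Omega$ and $\supp\theta\subset\Omega\times\R^d$ one gets, for $\hbar$ small, that $\abs{\nu-V_\varepsilon(x)}\geq c/2$ whenever $(x,p)\in\supp\theta$ and $\abs{\nu}\leq2\eta$, provided $\eta$ is fixed small enough depending on $c$; since $\abs{\nabla_p a_{\varepsilon,0}}^2=4(\nu-V_\varepsilon(x))$ on $\{a_{\varepsilon,0}=\nu\}$, this is precisely the non-critical condition of Theorem~\ref{THM:Expansion_of_trace} and Lemma~\ref{LE:verification_of_assumption}, and it persists with $\theta$ replaced by any cutoff from $C_0^\infty(\Omega\times\R^d)$. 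Fix such an $\eta$, choose $\psi\in C_0^\infty((-\eta/2,\eta/2))$ with $\psi\equiv1$ near $0$, and split $g=g_0+g_1$ with $g_0:=\psi g\in C_0^{\infty,\gamma}((-\eta/2,\eta/2))$ and $g_1:=(1-\psi)g\in C_0^\infty(\R)$.

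For the smooth part, Theorem~\ref{THM:func_calc} gives $g_1(H_{\hbar,\mu,\varepsilon})=\sum_{j=0}^{N_\delta}\hbar^j\OpW(a_{\varepsilon,j}^{g_1})+\hbar^{N_\delta+1}\mathcal R$ with $\hbar^{N_\delta+1}\norm{\mathcal R}_{\mathrm{op}}\leq C_N\hbar^N$, where $a_{\varepsilon,0}^{g_1}=g_1(a_{\varepsilon,0})$ and $a_{\varepsilon,1}^{g_1}=0$. Multiplying by $\OpW(\theta)$ and taking the trace, using the identity $\Tr[\OpW(\theta)\OpW(b)]=(2\pi\hbar)^{-d}\int_{\R^{2d}}\theta b\,dxdp$ (a consequence of Theorems~\ref{THM:composition-weyl-thm} and~\ref{BTHM:trace_formula}), the bound $\norm{\OpW(\theta)}_1\leq C\hbar^{-d}$ from Theorem~\ref{THM:thm_est_tr}, and the fact that the terms with $j\geq2$ contribute altogether $O(\hbar^{2-d})$ (here one uses $\tau\geq2$), one obtains
\begin{equation*}
	\Tr[\OpW(\theta)g_1(H_{\hbar,\mu,\varepsilon})]=\frac{1}{(2\pi\hbar)^d}\int_{\R^{2d}}g_1(a_{\varepsilon,0})\theta\,dxdp+O(\hbar^{2-d}),
\end{equation*}
which, as $\gamma\leq1$, lies within the claimed error.

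For the singular part, pick real cutoffs $\rho\in C_0^\infty(\Omega\times\R^d)$ with $\rho\equiv1$ near $\supp\theta$ and $f\in C_0^\infty((-\eta,\eta))$ with $f\equiv1$ near $\supp g_0$, and set $B:=f(H_{\hbar,\mu,\varepsilon})\OpW(\rho)$, a Hilbert--Schmidt operator. Lemma~\ref{LE:verification_of_assumption} applied with $\rho$ in place of $\theta$ gives $\norm{B^*\chi_\hbar(H_{\hbar,\mu,\varepsilon}-s)B}_1\leq C\hbar^{-d}$ uniformly in $s$, so Proposition~\ref{PRO:Tauberian} with $A=H_{\hbar,\mu,\varepsilon}$, $g=g_0$ and $Z(\hbar)=C\hbar^{-d}$ yields $\norm{B^*(g_0(H_{\hbar,\mu,\varepsilon})-g_0^{(\hbar)}(H_{\hbar,\mu,\varepsilon}))B}_1\leq C\hbar^{1+\gamma-d}$. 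Since $f\equiv1$ on $\supp g_0$ one has $f(H_{\hbar,\mu,\varepsilon})g_0(H_{\hbar,\mu,\varepsilon})f(H_{\hbar,\mu,\varepsilon})=g_0(H_{\hbar,\mu,\varepsilon})$ and $f(H_{\hbar,\mu,\varepsilon})g_0^{(\hbar)}(H_{\hbar,\mu,\varepsilon})f(H_{\hbar,\mu,\varepsilon})=g_0^{(\hbar)}(H_{\hbar,\mu,\varepsilon})$ up to $O(\hbar^\infty)$ in trace norm once localized by $\OpW(\rho)$, while $\rho\equiv1$ near $\supp\theta$ gives, via Theorem~\ref{THM:composition-weyl-thm} and Remark~\ref{Re:composition-weyl-thm}, $\OpW(\rho)\OpW(\theta)\OpW(\rho)=\OpW(\theta)+R_N$ with $\norm{R_N}_1\leq C_N\hbar^{N-d}$. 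Combining these with cyclicity of the trace, $\norm{\OpW(\theta)}_{\mathrm{op}}\leq C$ and the boundedness of $g_0(H_{\hbar,\mu,\varepsilon})-g_0^{(\hbar)}(H_{\hbar,\mu,\varepsilon})$ transfers the estimate to
\begin{equation*}
	\abs{\Tr[\OpW(\theta)(g_0(H_{\hbar,\mu,\varepsilon})-g_0^{(\hbar)}(H_{\hbar,\mu,\varepsilon}))]}\leq C\hbar^{1+\gamma-d}.
\end{equation*}
It remains to compute $\Tr[\OpW(\theta)g_0^{(\hbar)}(H_{\hbar,\mu,\varepsilon})]$. Writing $g_0^{(\hbar)}(H_{\hbar,\mu,\varepsilon})=\int_\R g_0(s)\chi_\hbar(H_{\hbar,\mu,\varepsilon}-s)\,ds$, inserting $f(H_{\hbar,\mu,\varepsilon})$ (the leftover $\OpW(\theta)(1-f(H_{\hbar,\mu,\varepsilon}))\chi_\hbar(H_{\hbar,\mu,\varepsilon}-s)$ being $O(\hbar^\infty)$ in trace norm for $s\in\supp g_0$), and invoking Theorem~\ref{THM:Expansion_of_trace} with the mollifier of Remark~\ref{RE:mollyfier_def}, one gets uniformly in $s\in(-\eta,\eta)$
\begin{equation*}
	\Tr[\OpW(\theta)f(H_{\hbar,\mu,\varepsilon})\chi_\hbar(H_{\hbar,\mu,\varepsilon}-s)]=\frac{f(s)}{(2\pi\hbar)^{d}}\int_{\{a_{\varepsilon,0}=s\}}\frac{\theta}{\abs{\nabla a_{\varepsilon,0}}}\,dS_s+O(\hbar^{2-d}).
\end{equation*}
Multiplying by $g_0(s)$, using $f\equiv1$ on the compact set $\supp g_0$, integrating in $s$ (where $g_0\in L^1$), and applying the coarea formula --- legitimate since $\abs{\nabla a_{\varepsilon,0}}\geq\sqrt{2c}$ on the part of $\supp\theta$ carrying energies in $\supp g_0$ --- produces $\Tr[\OpW(\theta)g_0^{(\hbar)}(H_{\hbar,\mu,\varepsilon})]=(2\pi\hbar)^{-d}\int_{\R^{2d}}g_0(a_{\varepsilon,0})\theta\,dxdp+O(\hbar^{2-d})$.

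Adding the three displayed estimates, together with $g=g_0+g_1$ and $\gamma\leq1$, gives the theorem. The step I expect to require the most care is this singular part: one must fix the window $(-\eta,\eta)$ small enough that $0$ lies strictly between the critical values of $a_{\varepsilon,0}$ restricted to $\supp\theta$ --- which is exactly where the hypothesis $\abs{V_\varepsilon}+\hbar^{\frac{2}{3}}\geq c$ is used --- and, in passing from the trace-norm bound on the quadratic expression $B^*(\,\cdot\,)B$ to a bound on the linear functional $\Tr[\OpW(\theta)(\,\cdot\,)]$, one has to keep the various $O(\hbar^\infty)$ and $O(\hbar^{N-d})$ remainders generated by the cutoffs $f$ and $\rho$ under control, so that the sole genuine loss caused by the non-smoothness of $g$ is the factor $\hbar^{1+\gamma}$ supplied by Proposition~\ref{PRO:Tauberian}, and nothing worse.
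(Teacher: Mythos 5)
Your proposal is correct and follows essentially the same strategy as the paper's proof: decompose $g$ into a singular piece supported near $0$ (paper: $g=f_1+f_2^2 g$ with $\supp f_2\subset(-\eta,\eta)$; you: $g=g_0+g_1=\psi g+(1-\psi)g$), handle the smooth piece via Theorem~\ref{THM:func_calc}, and handle the singular piece via Lemma~\ref{LE:verification_of_assumption} feeding Proposition~\ref{PRO:Tauberian} together with the wave-trace expansion of Theorem~\ref{THM:Expansion_of_trace}. The only cosmetic differences are the choice of sandwiching cutoffs ($f_2(H)\OpW(\theta_1)$ in the paper vs.\ your $f(H)\OpW(\rho)$) and your use of the exact trace pairing $\Tr[\OpW(\theta)\OpW(b)]=(2\pi\hbar)^{-d}\int\theta b$ where the paper instead composes and observes that the imaginary subprincipal term must cancel.
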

\begin{proof}
By continuity there exists an $\eta>0$ such that 
  \begin{equation*}
  	|\nu -V_\varepsilon(x)| + \hbar^{\frac{2}{3}} \geq \frac{c}{2} \qquad\text{for all $x\in \Omega$ and $\nu\in(-2\eta,2\eta)$}.
  \end{equation*}
Let $f_1,f_2 \in C_0^\infty(\R)$ such that $\supp(f_2)\subset (-\eta,\eta)$ and 
\begin{equation}\label{EQ:Loc_rough_prob_0}
	g(H_{\hbar,\mu,\varepsilon}) = f_1(H_{\hbar,\mu,\varepsilon}) +  f_2^2(H_{\hbar,\mu,\varepsilon})g(H_{\hbar,\mu,\varepsilon}).
\end{equation}
We can ensure this since $H_{\hbar,\mu,\varepsilon}$ is lower semibounded. With these functions, we have that
\begin{equation}\label{EQ:Loc_rough_prob_1}
	  \Tr[\OpW(\theta) g(H_{\hbar,\mu,\varepsilon})] =   \Tr[\OpW(\theta)  f_1(H_{\hbar,\mu,\varepsilon}) ] +  \Tr[\OpW(\theta) f_2^2(H_{\hbar,\mu,\varepsilon})g(H_{\hbar,\mu,\varepsilon})]. 
\end{equation}
We will consider each term separately and start by considering the first term on the right-hand side of \eqref{EQ:Loc_rough_prob_1}. Here we get by applying Theorem~\ref{THM:func_calc} that
\begin{equation}\label{EQ:Loc_rough_prob_2}
	  \Tr[\OpW(\theta)  f_1(H_{\hbar,\mu,\varepsilon}) ]  =   \Tr[\OpW(\theta)  \OpW(a^{f_1}_{\varepsilon,0}) ] + C\hbar^{2-d},
\end{equation}
where the constant $C$ depends on the numbers $\norm{\partial^\alpha_x \partial^\beta_p \theta}_{L^\infty(\R^d)}$  for all $\alpha,\beta\in\N_0^d$, $\norm{\partial^\alpha a_j}_{L^\infty(\R^d)}$ for all $\alpha\in\N^d$ and $j\in\{1\dots,d\}$ and $\norm{\partial_x^\alpha V_\varepsilon}_{L^\infty(\Omega)}$ for all $\alpha\in\N_0^d$. Moreover, we have used the notation $a_{\varepsilon,0}^{f_1}(x,p) = f_1( (p- \mu a(x))^2  + V_\varepsilon(x) )$. From applying Theorem~\ref{THM:composition-weyl-thm} and Theorem~\ref{BTHM:trace_formula} we  get that
\begin{equation}\label{EQ:Loc_rough_prob_3}
	\begin{aligned}
	\Tr[\OpW(\theta)  \OpW(a^{f_1}_{\varepsilon,0}) ] = {}& \frac{1}{(2\pi\hbar)^d} \int_{\R^{2d}} f_1\big( (p- \mu a(x))^2  + V_\varepsilon(x) \big) \theta(x,p) \,dxdp  
	\\
	&- \frac{i\hbar}{(2\pi\hbar)^d} \int_{\R^{2d}} c_{\varepsilon,1}(x,p) \,dxdp  +\mathcal{O}(\hbar^{2-d}),
	\end{aligned}
\end{equation}
where $c_{\varepsilon,1}$ is the subprincipal symbol we get from composing the operators. Since the left-hand side of \eqref{EQ:Loc_rough_prob_3} is real and $c_{\varepsilon,1}$ is real we have that the second term of the righthand side has to be of lower order. To see that the lefthand side is real note that for two self-adjoint operators $A$ and $B$, where one is trace-class and the other is bounded we have that
\begin{equation*}
	\overline{\Tr[AB]} = \Tr[B^{*}A^{*}] =  \Tr[AB]. 
\end{equation*}
In fact, one has, that the second term of the righthand side of \eqref{EQ:Loc_rough_prob_3} is zero. To see this one can calculate the coefficient and perform an integration by parts argument.   
Hence we have that
\begin{equation}\label{EQ:Loc_rough_prob_4}
	\begin{aligned}
	\Tr[\OpW(\theta)  \OpW(a^{f_1}_{\varepsilon,0}) ] = {}& \frac{1}{(2\pi\hbar)^d} \int_{\R^{2d}} f_1( (p- \mu a(x))^2  + V_\varepsilon(x) ) \theta(x,p) \,dxdp  +\mathcal{O}(\hbar^{2-d}).
	\end{aligned}
\end{equation}
Now we turn to the second term on the righthand side of \eqref{EQ:Loc_rough_prob_1}. When we consider this term we may, due to the support properties of $f_2$, assume that $\supp(g)\subset (-\frac{3}{2}\eta,0]$ that is $g\in C_0^{\infty,s}(\R)$. Let $g^\hbar$ be the smoothed version of $g$ as described in Remark~\ref{RE:mollyfier_def}. We then have that
\begin{equation}\label{EQ:Loc_rough_prob_5}
	\begin{aligned}
	   \Tr[\OpW(\theta) f_2^2(H_{\hbar,\mu,\varepsilon})g(H_{\hbar,\mu,\varepsilon})] = {}& \Tr[\OpW(\theta) f_2^2(H_{\hbar,\mu,\varepsilon})g^\hbar (H_{\hbar,\mu,\varepsilon})] 
	   \\
	   &+ \Tr\big[\OpW(\theta) f_2(H_{\hbar,\mu,\varepsilon}) [g (H_{\hbar,\mu,\varepsilon})-g^\hbar (H_{\hbar,\mu,\varepsilon})] f_2(H_{\hbar,\mu,\varepsilon})\big]. 
	   \end{aligned}
\end{equation}
Let  $\theta_1\in C_0^\infty(\Omega\times\R^d)$ such that $\theta \theta_1=\theta$. Then from applying Lemma~\ref{LE:disjoint_supp_PDO} twice we get  for all $N\in\N$ that
\begin{equation}\label{EQ:Loc_rough_prob_6}
	\begin{aligned}
	   \MoveEqLeft \big| \Tr\big[\OpW(\theta) f_2(H_{\hbar,\mu,\varepsilon}) [g(H_{\hbar,\mu,\varepsilon})-g^\hbar (H_{\hbar,\mu,\varepsilon})] f_2(H_{\hbar,\mu,\varepsilon})\big] \big| 
	   \\
	   &\leq \norm{\OpW(\theta)}_{\mathrm{op}} \big\lVert\OpW(\theta_1) f_2(H_{\hbar,\mu,\varepsilon}) [g (H_{\hbar,\mu,\varepsilon})-g^\hbar (H_{\hbar,\mu,\varepsilon})] f_2(H_{\hbar,\mu,\varepsilon})\OpW(\theta_1)\big\rVert_{1} + C_N \hbar^N.
	   \end{aligned}
\end{equation}
From Lemma~\ref{LE:verification_of_assumption} we have that assumption  \eqref{EQ:PRO:Tauberian_1} from Proposition~\ref{PRO:Tauberian} is satisfied with $B=f_2(H_{\hbar,\mu,\varepsilon})\OpW(\theta_1)$. Hence Proposition~\ref{PRO:Tauberian} gives us that
\begin{equation}\label{EQ:Loc_rough_prob_7}
	\begin{aligned}
	   \big\lVert\OpW(\theta_1) f_2(H_{\hbar,\mu,\varepsilon}) [g (H_{\hbar,\mu,\varepsilon})-g^\hbar (H_{\hbar,\mu,\varepsilon})] f_2(H_{\hbar,\mu,\varepsilon})\OpW(\theta_1)\big\rVert_{1} \leq C \hbar^{1+\gamma-d}.
	   \end{aligned}
\end{equation}
Using the definition of $g^\hbar$ and applying Theorem~\ref{THM:Expansion_of_trace} we have that
\begin{equation}\label{EQ:Loc_rough_prob_8}
	\begin{aligned}
	 \MoveEqLeft \Tr[\OpW(\theta) f_2^2(H_{\hbar,\mu,\varepsilon})g^\hbar (H_{\hbar,\mu,\varepsilon})]  
	 \\
	 ={}& \int_\R g(s)  \Tr[\OpW(\theta) f_2^2(H_{\hbar,\mu,\varepsilon})\chi_\hbar (H_{\hbar,\mu,\varepsilon}-s)] \,ds
	 \\
	 ={}& \frac{1}{(2\pi\hbar)^{d}} \int_\R g(s)   f_2^2(s)  \int_{\{a_{\varepsilon,0}=s\}} \frac{\theta }{\abs{\nabla{a_{\varepsilon,0}}}} \,dS_s \,ds + \mathcal{O}(\hbar^{2-d})
	 \\
	  ={}& \frac{1}{(2\pi\hbar)^{d}} \int_{\R^{2d}}  f_2^2g \big( (p- \mu a(x))^2  + V_\varepsilon(x) \big) \theta(x,p) \,dxdp + \mathcal{O}(\hbar^{2-d}).
	   \end{aligned}
\end{equation}
From combining \eqref{EQ:Loc_rough_prob_5}, \eqref{EQ:Loc_rough_prob_6}, \eqref{EQ:Loc_rough_prob_7} and \eqref{EQ:Loc_rough_prob_8} we obtain that
\begin{equation}\label{EQ:Loc_rough_prob_9}
	\begin{aligned}
	  \MoveEqLeft  \Tr[\OpW(\theta) f_2^2(H_{\hbar,\mu,\varepsilon})g(H_{\hbar,\mu,\varepsilon})] 
	  \\
	  = {}&  \frac{1}{(2\pi\hbar)^{d}} \int_{\R^{2d}}  f_2^2g\big( (p- \mu a(x))^2  + V_\varepsilon(x) \big) \theta(x,p) \,dxdp + \mathcal{O}(\hbar^{1+\gamma-d}).
	   \end{aligned}
\end{equation}
Recalling the identity in \eqref{EQ:Loc_rough_prob_0} and combining \eqref{EQ:Loc_rough_prob_1}, \eqref{EQ:Loc_rough_prob_4} and \eqref{EQ:Loc_rough_prob_9} we obtain that
\begin{equation*}
	  \Big|\Tr[\OpW(\theta) g(H_{\hbar,\mu,\varepsilon})] - \frac{1}{(2\pi\hbar)^d} \int_{\R^{2d}}g((p- \mu a(x))^2+V_\varepsilon(x))\theta(x,p)  \,dx dp \Big| \leq C \hbar^{1+\gamma-d},
\end{equation*}
where the constant depends on the numbers stated in the theorem. This concludes the proof.
\end{proof} 
\section{Auxiliary estimates}\label{sec:Aux_est}
We will in this section establish auxiliary estimates, which are needed for the proof of the main result. Especially we will prove bounds on $\norm{\varphi f(\mathcal{H}_{\hbar,\mu})}_1$, where $f\in C_0^\infty(\R)$, $\varphi\in C_0^\infty(\Omega)$ and $\mathcal{H}_{\hbar,\mu}$ satisfies Assumption~\ref{Assumption:local_potential_1} with some set $\Omega$ and the numbers $\hbar>0$ and $\mu\geq0$. Some of the results in this section are based on ideas originating in \cite{MR1343781}. The main estimate from which the other estimates are deduced is contained in the following Lemma. The Lemma is taken from \cite{MR1343781}, where it is Lemma~{3.6}. 
\begin{lemma}\label{LE:Resolvent_est_local}
Let $H_{\hbar,\mu} = (-i\hbar\nabla-\mu a)^2 +V$ be a Schr\"odinger operator acting in $L^2(\R^d)$ and assume that $V\in L^\infty(\R^d)$ and $a_j\in L^2_{loc}(\R^d)$ for $j\in\{1,\dots,d\}$.  Moreover, suppose that $\mu\leq\mu_0<1$ and $\hbar\in(0,\hbar_0]$, with $\hbar_0$ sufficiently small. Let $\varphi_1\in C^\infty_0(\R^d)$ and $\varphi_2\in \mathcal{B}^{\infty}(\R^d)$ such that
\begin{equation}
	\dist\big\{\supp(\varphi_1),\supp(\varphi_2)\big\} \geq c>0,
\end{equation}	
and let $r,m\in\{0,1\}$. Then for any $N>\frac{d}{2}$ it holds that
\begin{equation*}
	\norm{\varphi_1 Q_l^r  (H_\hbar - z)^{-1}  (Q_q^{*})^m \varphi_2}_1 \leq C_N \frac{\langle z \rangle^{\frac{m+r}{2}}}{|\im(z)|} \frac{\langle z \rangle^{\frac{d}{2}}}{\hbar^d} \frac{\langle z \rangle^N \hbar^{2N}}{|\im(z)|^{2N}},
\end{equation*}
where $Q_l = -i\hbar\partial_{x_l}-\mu a_l $.
The constant $C_N$ depends only on the numbers $N$, $\norm{\partial^\alpha\varphi_1}_{L^\infty(\R^d)}$, $\norm{\partial^\alpha\varphi_2}_{L^\infty(\R^d)}$ for all $\alpha\in\N_0^d$ and the constant $c$.
\end{lemma}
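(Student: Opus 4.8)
The plan is to reduce the estimate to the free magnetic case by a resolvent identity, exploit the disjoint supports to gain arbitrarily many powers of $\hbar$, and then control the resulting free-resolvent kernels in trace norm. Since $\mu\le\mu_0<1$ the magnetic term is a perturbation, but it is cleaner to work directly with $H_{\hbar,\mu}=\sum_l Q_l^2+V$. First I would write the basic resolvent identity
\begin{equation*}
	(H_{\hbar,\mu}-z)^{-1} = (H_{\hbar,\mu,0}-z)^{-1} - (H_{\hbar,\mu,0}-z)^{-1}\,V\,(H_{\hbar,\mu}-z)^{-1},
\end{equation*}
where $H_{\hbar,\mu,0}=\sum_l Q_l^2$ is the magnetic kinetic operator, and iterate it $N_0$ times for a large integer $N_0$ (to be chosen in terms of $N$ and $d$), so that every term except the remainder involves only the free magnetic resolvent sandwiched between factors of $V\in L^\infty$ and the cut-offs $\varphi_1,\varphi_2$. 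The point of the localisation is that $\varphi_1$ and $\varphi_2$ have disjoint supports with separation $c$, so any operator of the form $\varphi_1 R_0(z)^k \varphi_2$ has a kernel that is (by the exponential/rapid decay of the free magnetic resolvent kernel away from the diagonal, or equivalently by an elliptic commutator argument moving powers of $(H_{\hbar,\mu,0}-z)$ across the gap) of size $\mathcal O(\hbar^\infty)$ in operator norm, with the true gain quantified by the factor $\langle z\rangle^N\hbar^{2N}/d(z)^{2N}$.

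The key steps, in order: (i) commute the cut-off $\varphi_1$ through the powers of the free resolvent to create commutators $[\varphi_1,(H_{\hbar,\mu,0}-z)^{-1}] = (H_{\hbar,\mu,0}-z)^{-1}[H_{\hbar,\mu,0},\varphi_1](H_{\hbar,\mu,0}-z)^{-1}$; each commutator produces an extra factor of $\hbar$ and a first-order magnetic operator, and because $\supp(\varphi_1)\cap\supp(\varphi_2)=\emptyset$ the derivatives of $\varphi_1$ landing near $\supp\varphi_2$ must be compensated, ultimately yielding one factor of $\hbar^2/d(z)$ per pair of commutators — this is the mechanism behind the $(\langle z\rangle^{1/2}\hbar/d(z))^{2N}$-type improvement. (ii) Estimate the operator norm of each free magnetic resolvent $(H_{\hbar,\mu,0}-z)^{-1}$ by $1/d(z)$ (using that $H_{\hbar,\mu,0}\ge0$ and $\mu_0<1$ guarantees a uniform lower bound of the form $H_{\hbar,\mu,0}\ge c_0(-i\hbar\nabla)^2 - C$ via a diamagnetic/Kato-type inequality, so $\dist(z,\mathrm{spec})\gtrsim d(z)$ uniformly in $\hbar,\mu$). (iii) For the trace-norm factor: one of the two cut-offs, say $\varphi_1\in C_0^\infty$, makes $\varphi_1(H_{\hbar,\mu,0}-z)^{-N'}$ Hilbert–Schmidt with an explicit $\hbar$-dependent bound; more precisely $\|\varphi_1(H_{\hbar,\mu,0}-z)^{-N'}\|_{\HS}^2 \le C\hbar^{-d}\langle z\rangle^{d/2-2N'}/\dots$ by writing the kernel via $\mathcal F_\hbar$ and computing the $L^2$ norm (this produces the $\langle z\rangle^{d/2}\hbar^{-d}$ factor and requires $N'>d/4$, hence $N>d/2$). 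The factors $Q_l^r$ and $(Q_q^*)^m$ with $r,m\in\{0,1\}$ are absorbed by noting $\|Q_l(H_{\hbar,\mu,0}+C)^{-1/2}\|_{\mathrm{op}}\le C$, which costs at most $\langle z\rangle^{1/2}$ each, giving the prefactor $\langle z\rangle^{(m+r)/2}$. (iv) Collect: operator-norm resolvent bounds $1/d(z)$, commutator gains $\hbar^2\langle z\rangle/d(z)^2$, one Hilbert–Schmidt factor $\langle z\rangle^{d/2}\hbar^{-d}$, and the $Q$-factors, and verify the bookkeeping matches the stated right-hand side. The remainder term from the finite iteration is handled by choosing $N_0$ large enough that its (crude) bound is dominated by the claimed quantity.

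The main obstacle is step (i) combined with the simultaneous tracking of $z$-dependence: making the "disjoint supports $\Rightarrow$ $\hbar^\infty$ with the precise power $\langle z\rangle^N\hbar^{2N}/d(z)^{2N}$" rigorous requires carefully iterating the commutator expansion while keeping every resolvent factor estimated by $1/d(z)$ and every spatial derivative of the cut-offs under control, and ensuring that the Hilbert–Schmidt estimate is inserted at the right place (on a factor that still carries a $C_0^\infty$ cut-off) rather than being spoiled by the $\mathcal B^\infty$ (non-compactly-supported) cut-off $\varphi_2$. A secondary technical point is justifying the uniform-in-$(\hbar,\mu)$ spectral lower bound and the boundedness of $Q_l(H_{\hbar,\mu,0}+C)^{-1/2}$, which follows from expanding $Q_l^2$ and the smallness $\mu_0<1$, but must be stated carefully since $a_j$ is only $L^2_{loc}$. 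Fortunately, as the lemma is quoted verbatim from \cite{MR1343781}, the argument there can be followed line by line; the role here is simply to confirm it applies under the present (identical) hypotheses.
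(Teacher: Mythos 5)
The paper does not give its own proof of this lemma; it is quoted verbatim from \cite{MR1343781} (Lemma~3.6 there), so the comparison is really against Sobolev's argument. Your overall skeleton --- a commutator iteration with the cut-offs to exploit $\dist(\supp\varphi_1,\supp\varphi_2)\ge c$ and gain powers of $\hbar/d(z)$, combined with a Hilbert--Schmidt (or Schatten) estimate on a factor still carrying the $C_0^\infty$ cut-off to convert the operator-norm gain into a trace-norm bound, absorbing the $Q^r$ and $(Q^*)^m$ factors by $\|Q_l(H_{\hbar,\mu,0}+C)^{-1/2}\|_{\mathrm{op}}\le C$ --- is indeed the structure of that proof, and your accounting of where $\langle z\rangle^{(m+r)/2}/d(z)$, $\langle z\rangle^{d/2}\hbar^{-d}$ and $(\langle z\rangle^{1/2}\hbar/d(z))^{2N}$ come from is correct.

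Two points, however, would block the proposal as written. First, in step (iii) you propose computing $\|\varphi_1(H_{\hbar,\mu,0}-z)^{-N'}\|_{\HS}$ ``by writing the kernel via $\mathcal F_\hbar$ and computing the $L^2$ norm''. This only works when $\mu=0$: the magnetic resolvent is not a Fourier multiplier and has no explicit $\mathcal F_\hbar$-kernel, especially with $a_j\in L^2_{\mathrm{loc}}$. The correct tool, and the one used in \cite{MR1343781}, is the diamagnetic (Kato--Simon) inequality: the magnetic semigroup (hence the resolvent and its powers, by the Laplace transform) has a kernel dominated pointwise in modulus by that of the non-magnetic one, so the Hilbert--Schmidt bound for the free resolvent transfers to the magnetic case with no loss. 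Without this replacement, step (iii) does not go through. Second, the initial reduction of $(H_{\hbar,\mu}-z)^{-1}$ to $(H_{\hbar,\mu,0}-z)^{-1}$ by iterating the resolvent identity in $V$ is an unnecessary detour: since $V\in L^\infty$ is a bounded perturbation, it changes neither the commutator $[H,\varphi_1]$ (which sees only the kinetic part) nor the $\HS$-estimate (after the diamagnetic comparison one works with the full $H_{\hbar,\mu}$ anyway), and the remainder term of the iteration still carries the full resolvent, so nothing is gained; one should run the commutator expansion directly on $(H_{\hbar,\mu}-z)^{-1}$ using a chain of intermediate cut-offs between $\varphi_1$ and $\varphi_2$. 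With those two corrections your outline matches the cited argument.
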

Note that we have phrased the lemma slightly differently here. In \cite{MR1343781} it is not stated with $|\im(z)|$ but with a function $d(z)$, that measures the distance from $z$ to the interval $[\lambda_0,\infty)$, where $\lambda_0 \leq \min(V(x)) -1$. We have done this since we will only need that $d(z)\geq |\im(z)|$.  
The next Lemma is also from \cite{MR1343781}, where it is Lemma~{3.9}.
\begin{lemma}\label{LE:Func_cal_est_inf_pon}
Let $H_{\hbar,\mu} = (-i\hbar\nabla-\mu a)^2 +V$ be a Schr\"odinger operator acting in $L^2(\R^d)$ and assume that $V\in L^\infty(\R^d)$ and $a_j\in L^2_{loc}(\R^d)$ for $j\in\{1,\dots,d\}$. Moreover, suppose that $\mu\leq\mu_0<1$ and $\hbar\in(0,\hbar_0]$, with $\hbar_0$ sufficiently small. Let $f\in C_0^\infty(\R)$ and $\varphi\in C_0^\infty(\R^d)$. Then
\begin{equation*}
	\norm{\varphi f(H_{\hbar,\mu})}_1 \leq C \hbar^{-d}.
\end{equation*}
 If $\varphi_1\in C^\infty_0(\R^d)$ and $\varphi_2\in \mathcal{B}^{\infty}(\R^d)$ such that
\begin{equation}
	\dist\big\{\supp(\varphi_1),\supp(\varphi_2)\big\} \geq c>0.
\end{equation}	
Then for any $N\geq0$, it holds that
\begin{equation*}
	\norm{\varphi_1  f(H_{\hbar,\mu}) \varphi_2}_1 \leq C_N  \hbar^N.
\end{equation*}
The constant $C_N$ depends on $\supp(f)$, the numbers $N$, $\norm{f}_{L^\infty(\R)}$, $\norm{\partial^\alpha\varphi_1}_{L^\infty(\R^d)}$, $\norm{\partial^\alpha\varphi_2}_{L^\infty(\R^d)}$ for all $\alpha\in\N_0^d$ and the constant $c$.
\end{lemma}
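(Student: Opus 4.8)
The plan is to derive both estimates from the Helffer--Sj\"ostrand formula (Theorem~\ref{THM:Helffer-Sjostrand}) combined with the localized resolvent bound of Lemma~\ref{LE:Resolvent_est_local}. Fix an almost analytic extension $\tilde f\in C_0^\infty(\C)$ of $f$ with $|\bar\partial\tilde f(z)|\le C_n|\im z|^n$ for every $n\in\N_0$. Since $\tilde f$ has compact support, on its support we have $\langle z\rangle\le C$ and $d(z):=\dist(z,\sigma(H_{\hbar,\mu}))\ge|\im z|$, so in all applications of Lemma~\ref{LE:Resolvent_est_local} the powers of $\langle z\rangle$ may be absorbed into constants and $d(z)^{-1}\le|\im z|^{-1}$.

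\textbf{The disjoint--support estimate.} Writing $f(H_{\hbar,\mu})=-\frac1\pi\int_\C\bar\partial\tilde f(z)(z-H_{\hbar,\mu})^{-1}\,L(dz)$ and inserting the cut-offs gives
\begin{equation*}
	\varphi_1 f(H_{\hbar,\mu})\varphi_2=-\frac1\pi\int_\C\bar\partial\tilde f(z)\,\varphi_1(z-H_{\hbar,\mu})^{-1}\varphi_2\,L(dz).
\end{equation*}
Since $\varphi_1\in C_0^\infty$, $\varphi_2\in\mathcal{B}^\infty$ have supports at distance $\ge c$, Lemma~\ref{LE:Resolvent_est_local} with $r=m=0$ yields, for any integer $M>d/2$,
\begin{equation*}
	\norm{\varphi_1(z-H_{\hbar,\mu})^{-1}\varphi_2}_1\le C_M\,\hbar^{2M-d}\,|\im z|^{-2M-1}.
\end{equation*}
Choosing the extension so that $|\bar\partial\tilde f(z)|\le C\,|\im z|^{2M+2}$ and integrating over the bounded set $\supp\tilde f$ gives $\norm{\varphi_1 f(H_{\hbar,\mu})\varphi_2}_1\le C_M\hbar^{2M-d}$; letting $M$ be arbitrarily large proves the $\mathcal{O}(\hbar^N)$ bound for every $N$.

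\textbf{The estimate $\norm{\varphi f(H_{\hbar,\mu})}_1\le C\hbar^{-d}$.} Pick $\psi\in C_0^\infty(\R^d)$ with $\psi\equiv1$ on a neighbourhood of $\supp\varphi$ and split $\varphi f(H_{\hbar,\mu})=\varphi f(H_{\hbar,\mu})\psi+\varphi f(H_{\hbar,\mu})(1-\psi)$; since $1-\psi\in\mathcal{B}^\infty(\R^d)$ has support at positive distance from $\supp\varphi$, the second term is $\mathcal{O}(\hbar^\infty)$ by the estimate just proved. For the first term a single power of the resolvent is not trace class (the kernel of $(z-H_{\hbar,\mu})^{-1}$ is too singular on the diagonal for $d\ge3$), so I would pass to a higher--power Helffer--Sj\"ostrand formula: integrating by parts $k-1$ times in $z$, using $\partial_z^{k-1}(z-H_{\hbar,\mu})^{-1}=(-1)^{k-1}(k-1)!\,(z-H_{\hbar,\mu})^{-k}$ and $[\bar\partial,\partial_z]=0$, gives for any $k\in\N$
\begin{equation*}
	f(H_{\hbar,\mu})=\frac{(-1)^{k}}{\pi(k-1)!}\int_\C\bar\partial\big(\partial_z^{k-1}\tilde f(z)\big)\,(z-H_{\hbar,\mu})^{-k}\,L(dz),
\end{equation*}
where $\partial_z^{k-1}\tilde f$ is again compactly supported and $\bar\partial(\partial_z^{k-1}\tilde f)$ vanishes to infinite order off $\R$. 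Fixing $k>d$, it then suffices to bound $\norm{\varphi(z-H_{\hbar,\mu})^{-k}\psi}_1$. For this one iterates the resolvent identity, peeling off commutators $[H_{\hbar,\mu},\chi]$ --- each a first--order operator with coefficients in $\hbar\cdot C_0^\infty$ --- so as to express $\varphi(z-H_{\hbar,\mu})^{-k}\psi$ as a finite sum of operators, each a product of at least $k$ resolvents interspersed with $C_0^\infty$ cut-offs and factors $\hbar\partial_{x_j}$ (or the operators $Q_j$ of Lemma~\ref{LE:Resolvent_est_local}); splitting such a product into two pieces with at least $k/2>d/2$ resolvents each and applying the semiclassical Kato--Seiler--Simon inequality (which reduces, after comparison with the free operator, to an explicit computation) bounds each piece in Hilbert--Schmidt norm by $C\hbar^{-d/2}$ times fixed powers of $\langle z\rangle$ and $|\im z|^{-1}$. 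Hence $\norm{\varphi(z-H_{\hbar,\mu})^{-k}\psi}_1\le C\hbar^{-d}$ times such powers, and integrating against $\bar\partial(\partial_z^{k-1}\tilde f)$ with the extension chosen to decay fast enough yields $\norm{\varphi f(H_{\hbar,\mu})\psi}_1\le C\hbar^{-d}$, and with it the claim.

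\textbf{Main obstacle.} The delicate point is exactly the trace--norm bound for $\varphi(z-H_{\hbar,\mu})^{-k}\psi$ with \emph{overlapping} cut-offs: one must produce the sharp power $\hbar^{-d}$, together with a power of $|\im z|^{-1}$ that is still integrable against $\bar\partial\tilde f$, uniformly in $z$ on $\supp\tilde f$. This forces careful bookkeeping of the $\hbar$-weights created at each step of the commutator expansion and use of the semiclassical scaling in the Kato--Seiler--Simon bound; it is precisely the content of the estimates of \cite{MR1343781}, from which this Lemma is quoted.
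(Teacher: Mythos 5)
The paper does not give a proof of this Lemma: it is stated and then attributed verbatim to \cite{MR1343781} (Lemma~3.9 there). So the comparison is with what a correct proof would look like.

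Your argument for the disjoint--support bound $\norm{\varphi_1 f(H_{\hbar,\mu})\varphi_2}_1\le C_N\hbar^N$ is correct and is the natural route: Helffer--Sj\"ostrand combined with Lemma~\ref{LE:Resolvent_est_local}, using $\langle z\rangle\le C$ on $\supp\tilde f$, $d(z)\ge|\im z|$, and the freedom to make $|\bar\partial\tilde f|$ vanish to arbitrary order off $\R$.

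The argument for $\norm{\varphi f(H_{\hbar,\mu})}_1\le C\hbar^{-d}$ has two genuine problems. First, the ``higher--power Helffer--Sj\"ostrand formula'' you write down is not an identity for $f(H_{\hbar,\mu})$. Integrating by parts in $z$ in the usual formula converts $-\tfrac1\pi\int\bar\partial\tilde g\,(z-H)^{-k}L(dz)$ into $\tfrac1{(k-1)!}g^{(k-1)}(H)$ for an almost analytic extension $\tilde g$ of $g$; with your choice $\tilde g=\partial_z^{k-1}\tilde f$ (an a.a.e.\ of $f^{(k-1)}$) the right-hand side is a constant multiple of $f^{(2k-2)}(H)$, not $f(H)$. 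To bring in $k$-fold resolvent powers without anti-differentiating a compactly supported $f$ (which destroys compact support) one typically factorises instead: either $f=g_1 g_2$ with $g_j\in C_0^\infty$, which reduces the trace bound to Hilbert--Schmidt bounds on $\varphi g_1(H)$ and $\psi g_2(H)$, or $f(H)=F(H)(H+M)^{-k}$ with $F(t)=f(t)(t+M)^k\in C_0^\infty$ and $M$ large.

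Second, and more fundamentally, the step ``applying the semiclassical Kato--Seiler--Simon inequality\ldots after comparison with the free operator'' does not go through as stated under the hypotheses of the Lemma. Kato--Seiler--Simon controls $\norm{\varphi(x)\,g(-i\hbar\nabla)}_{\HS}$; to transfer this to the magnetic operator you would need a perturbative resolvent comparison with $-\hbar^2\Delta$, but with $a_j$ only in $L^2_{\mathrm{loc}}$ the perturbation $H-(-\hbar^2\Delta)=2i\hbar\mu\,a\cdot\nabla+i\hbar\mu(\nabla\cdot a)+\mu^2|a|^2+V$ is not controllable by the free resolvent, so that expansion is not available. The tool that does work here, and the one Sobolev's estimates rest on, is the Kato--Simon diamagnetic inequality, which is a statement about the \emph{heat semigroup}, not about complex resolvents: $|e^{-tH_{\hbar,\mu}}(x,y)|\le e^{t\norm{V}_\infty}\,e^{t\hbar^2\Delta}(x,y)$ pointwise. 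From this and the explicit free heat kernel one gets $\norm{\varphi e^{-tH_{\hbar,\mu}}}_{\HS}\le C e^{t\norm{V}_\infty}(\hbar^2 t)^{-d/4}\norm{\varphi}_{L^2}$, and Laplace-transforming gives $\norm{\varphi(H_{\hbar,\mu}+M)^{-k}}_{\HS}\le C\hbar^{-d/2}$ for $k>d/4$ and $M>\norm{V}_\infty$. Writing $\varphi f(H)=\varphi g_1(H)\psi g_2(H)+\varphi g_1(H)(1-\psi)g_2(H)$ with $g_1g_2=f$, $\psi\equiv 1$ near $\supp\varphi$, the second term is $\mathcal{O}(\hbar^\infty)$ by your disjoint--support estimate, and each factor in the first term is estimated in Hilbert--Schmidt norm by $C\hbar^{-d/2}$ via $g_j(H)=(H+M)^{-k}\big[(H+M)^kg_j(H)\big]$. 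This yields $\norm{\varphi f(H_{\hbar,\mu})}_1\le C\hbar^{-d}$ without ever invoking complex resolvent powers or a free-operator parametrix. In short, the semigroup/diamagnetic route is not an optional refinement of your KSS step — it is what makes the argument work at the stated level of regularity for $a$.
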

There is an almost similar result as the next lemma in \cite{MR1343781}. This is Theorem~{3.12}. The difference in the two results is that our constant will not be directly dependent on the number $\lambda_0$ (with the notation from \cite{MR1343781}). This is due to us using the Helffer-Sj\"ostrand formula instead of the representation formula for $f(A)$ used in \cite{MR1343781}, where $f\in C_0^\infty(\R^d)$ and $A$ is some self-adjoint lower semibounded operator.  
\begin{lemma}\label{LE:Comparision_Loc_infty} 
Let $\mathcal{H}_{\hbar,\mu}$ be an operator acting in $L^2(\R^d)$ which satisfies Assumption~\ref{Assumption:local_potential_1} with the open set $\Omega$ and the local operator $H_{\hbar,\mu} = (-i\hbar\nabla-\mu a)^2 +V $.  Assume that $\mu\leq\mu_0<1$ and $\hbar\in(0,\hbar_0]$, with $\hbar_0$ sufficiently small.
Then for $f\in C_0^\infty(\R)$ and $\varphi\in C_0^\infty(\Omega)$ we have for any $N\in\N_0$ that
\begin{equation*}
	\norm{\varphi[f(\mathcal{H}_{\hbar,\mu})-f(H_{\hbar,\mu})]}_1\leq C_N \hbar^N,
\end{equation*}	
and 
\begin{equation*}
	\norm{\varphi f(\mathcal{H}_{\hbar,\mu})}_1\leq C \hbar^{-d},
\end{equation*}	
The constant $C_N$ depends on $\supp(f)$ the numbers $N$, $\norm{f}_{L^\infty(\R)}$, $\norm{\partial^\alpha\varphi}_{L^\infty(\R^d)}$ for all $\alpha\in\N_0^d$.
\end{lemma}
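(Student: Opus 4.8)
The plan is to use the Helffer--Sj\"ostrand formula to reduce both claimed bounds to resolvent estimates, and then to feed in Lemma~\ref{LE:Resolvent_est_local} and Lemma~\ref{LE:Func_cal_est_inf_pon}. First, fix an almost analytic extension $\tilde f\in C_0^\infty(\C)$ of $f$, so that by Theorem~\ref{THM:Helffer-Sjostrand} we may write, for $A$ equal to either $\mathcal{H}_{\hbar,\mu}$ or $H_{\hbar,\mu}$,
\begin{equation*}
  \varphi f(A) = -\frac1\pi \int_\C \bar\partial\tilde f(z)\, \varphi (z-A)^{-1}\, L(dz).
\end{equation*}
The key algebraic observation is the resolvent comparison identity: since $\varphi\in C_0^\infty(\Omega)$ and $C_0^\infty(\Omega)\subset\mathcal{D}(\mathcal{H})$ with $\mathcal{H}_{\hbar,\mu}=H_{\hbar,\mu}$ on $C_0^\infty(\Omega)$, one can pick $\varphi_1\in C_0^\infty(\Omega)$ with $\varphi_1\varphi=\varphi$ and write
\begin{equation*}
  \varphi\big[(z-\mathcal{H}_{\hbar,\mu})^{-1}-(z-H_{\hbar,\mu})^{-1}\big]
  = \varphi (z-\mathcal{H}_{\hbar,\mu})^{-1}\big[(\mathcal{H}_{\hbar,\mu}-H_{\hbar,\mu})(z-H_{\hbar,\mu})^{-1}\big].
\end{equation*}
The operator $\mathcal{H}_{\hbar,\mu}-H_{\hbar,\mu}$ is supported away from $\supp(\varphi_1)$ in the sense that, after inserting a second cut-off $\varphi_2\in\mathcal{B}^\infty(\R^d)$ which equals $1$ near $\supp(V,a)$ outside a neighbourhood of $\supp\varphi$ and commuting it through, every term that survives pairs a resolvent of $H_{\hbar,\mu}$ cut off by $\varphi_1$ on one side against a factor supported at positive distance on the other; this is exactly the geometry of Lemma~\ref{LE:Resolvent_est_local} (with $r=m\in\{0,1\}$ accounting for the first-order terms $\mu a\cdot\nabla$, $\nabla\cdot\mu a$, $V$ in the difference of the two operators, each of which is a product of at most one factor $Q_l$ with a multiplication operator).

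Next I would estimate the trace norm under the integral sign. For the difference bound, Lemma~\ref{LE:Resolvent_est_local} gives
\begin{equation*}
  \norm{\varphi_1 Q_l^r (z-H_{\hbar,\mu})^{-1}(Q_q^*)^m\varphi_2}_1
  \leq C_N \frac{\langle z\rangle^{(m+r)/2+d/2+N}\hbar^{2N-d}}{d(z)^{2N+1}},
\end{equation*}
where $d(z)=|\im z|$; combined with the resolvent bound $\norm{(z-\mathcal{H}_{\hbar,\mu})^{-1}}_{\mathrm{op}}\leq d(z)^{-1}$ and the almost-analyticity estimate $|\bar\partial\tilde f(z)|\leq C_M |\im z|^M$ for any $M$, the $z$-integral over the (compact) support of $\tilde f$ converges provided $M>2N$, and one collects a factor $\hbar^{2N-d}$; choosing $N$ large gives the bound $C_N\hbar^N$ for every $N\in\N_0$, after renaming. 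For the second, absolute bound $\norm{\varphi f(\mathcal{H}_{\hbar,\mu})}_1\leq C\hbar^{-d}$, write $\varphi f(\mathcal{H}_{\hbar,\mu})=\varphi f(H_{\hbar,\mu})+\varphi[f(\mathcal{H}_{\hbar,\mu})-f(H_{\hbar,\mu})]$; the first term is bounded by $C\hbar^{-d}$ directly by the first assertion of Lemma~\ref{LE:Func_cal_est_inf_pon}, and the second is $O(\hbar^N)$ by the part just proved, so it is absorbed.

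The main obstacle is making the ``supported away from $\varphi$'' reasoning rigorous: the difference $\mathcal{H}_{\hbar,\mu}-H_{\hbar,\mu}$ is only defined as a form/operator difference on $C_0^\infty(\Omega)$, so one has to be careful that the manipulation $\varphi(z-\mathcal{H})^{-1}[(\mathcal{H}-H)(z-H)^{-1}]$ makes sense on a dense domain and that the intermediate vectors lie in the right spaces; the standard device (also used in \cite{MR1343781}) is to introduce the auxiliary cut-offs $\varphi_1,\varphi_2$ with $\varphi_1\varphi=\varphi$, $\dist(\supp\varphi_1,\supp(1-\varphi_2))>0$ arranged so that $H_{\hbar,\mu}$ and $\mathcal{H}_{\hbar,\mu}$ genuinely agree on the relevant range, and to commute $\varphi_2$ through the resolvent picking up commutators $[\varphi_2,H_{\hbar,\mu}]$, which are again first-order and handled by the same Lemma~\ref{LE:Resolvent_est_local} with an extra $Q_l$ factor. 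The factor $\langle z\rangle$-growth is harmless because $\tilde f$ has compact support. Using the Helffer--Sj\"ostrand representation rather than the representation formula of \cite{MR1343781} is precisely what removes the dependence of $C_N$ on the lower bound $\lambda_0$ of the spectrum, as claimed in the statement.
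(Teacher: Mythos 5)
Your proposal is correct and follows essentially the same route as the paper: Helffer--Sj\"ostrand, a cutoff $\varphi_1$ equal to $1$ near $\supp\varphi$ and supported in $\Omega$, the commutator identity $\varphi_1(z-\mathcal{H}_{\hbar,\mu})^{-1}-(z-H_{\hbar,\mu})^{-1}\varphi_1=\sum_{j}(z-H_{\hbar,\mu})^{-1}[Q_j^*Q_j,\varphi_1](z-\mathcal{H}_{\hbar,\mu})^{-1}$ whose commutator $-i\hbar Q_j\partial_{x_j}\varphi_1-\hbar^2\partial_{x_j}^2\varphi_1$ is supported at distance $\geq c$ from $\supp\varphi$, Lemma~\ref{LE:Resolvent_est_local} for the resulting disjoint-support trace norms together with $\norm{(z-\mathcal{H}_{\hbar,\mu})^{-1}}_{\mathrm{op}}\leq|\im(z)|^{-1}$, and the second bound deduced from the first via Lemma~\ref{LE:Func_cal_est_inf_pon}. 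The one caveat is that your displayed identity $\varphi(z-\mathcal{H}_{\hbar,\mu})^{-1}(\mathcal{H}_{\hbar,\mu}-H_{\hbar,\mu})(z-H_{\hbar,\mu})^{-1}$ cannot be taken literally, since $\mathcal{H}_{\hbar,\mu}-H_{\hbar,\mu}$ is only defined on $C_0^\infty(\Omega)$ and the relevant geometry is that of $\Omega$ versus $\supp\varphi$ rather than of $\supp(V,a)$; the one-sided commutator version you describe in your final paragraph is exactly the paper's rigorous substitute, so this is a matter of presentation rather than a gap.
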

\begin{proof}
Using the Helffer-Sj\"ostrand formula (Theorem~\ref{THM:Helffer-Sjostrand}) we obtain that
  \begin{equation}\label{EQ:Comparision_Loc_infty_1}
    \varphi[f(\mathcal{H}_{\hbar,\mu})-f(H_{\hbar,\mu})] =- \frac{1}{\pi} \int_\C   \bar{\partial }\tilde{f}(z) \varphi [(z-\mathcal{H}_{\hbar,\mu})^{-1}-(z-H_{\hbar,\mu})^{-1}] \, L(dz),
  \end{equation}
where $\tilde{f}$ is an almost analytic extension of $f$.
Since we assume that $\varphi\in C_0^\infty(\Omega)$ there exists a positive constant $c$ such that
\begin{equation*}
	\dist\big(\supp(\varphi),\partial\Omega\big) \geq 4c.
\end{equation*}
Let $\varphi_1 \in C_0^\infty(\R^d)$ such that $\varphi_1(x)\in[0,1]$ for all $x\in\R^d$. Moreover, we choose $\varphi_1$ such that $\varphi_1(x)=1$ on the set $\{x\in\R^d \,|\, \dist(\supp(\varphi),x) \leq c \}$ and 
\begin{equation*}
	\supp(\varphi_1)\subset \{x\in\R^d \,|\, \dist(\supp(\varphi),x) \leq 3c \}. 
\end{equation*}
With this function, we have that
  \begin{equation}\label{EQ:Comparision_Loc_infty_2}
     \begin{aligned}
     \MoveEqLeft \varphi [(z-\mathcal{H}_{\hbar,\mu})^{-1}-(z-H_{\hbar,\mu})^{-1}] 
     \\
     &=  \varphi [\varphi_1(z-\mathcal{H}_{\hbar,\mu})^{-1}-(z-H_{\hbar,\mu})^{-1}\varphi_1] -  \varphi (z-H_{\hbar,\mu})^{-1}(1-\varphi_1).
     \end{aligned}
  \end{equation}
For the second term on the right-hand side of \eqref{EQ:Comparision_Loc_infty_2} we have by Lemma~\ref{LE:Resolvent_est_local} for all $N>\frac{d}{2}$ that
\begin{equation}\label{EQ:Comparision_Loc_infty_3}
	\norm{\varphi (z-H_{\hbar,\mu})^{-1}(1-\varphi_1)}_1 \leq C_N \frac{\langle z \rangle^{N+\frac{d}{2}} \hbar^{2N-d}}{|\im(z)|^{2N+1}},
\end{equation}
where $C_N$ depends only on the numbers $N$, the functions $\varphi$, $\varphi_1$ and the constant $c$. For the first term on the right-hand side of \eqref{EQ:Comparision_Loc_infty_2} we have by the resolvent formalism that
  \begin{equation}\label{EQ:Comparision_Loc_infty_4}
     \begin{aligned}
     \MoveEqLeft  \varphi_1(z-\mathcal{H}_{\hbar,\mu})^{-1}-(z-H_{\hbar,\mu})^{-1}\varphi_1 
     \\
     &=   \sum_{j=1}^d (z-H_{\hbar,\mu})^{-1} [Q_j^{*}Q_j, \varphi_1 ] (z-\mathcal{H}_{\hbar,\mu})^{-1}
     \\
      &=  \sum_{j=1}^d (z-H_{\hbar,\mu})^{-1} \big(-i\hbar Q_j \partial_{x_j}\varphi_1  -\hbar^2\partial_{x_j}^2\varphi_1 \big) (z-\mathcal{H}_{\hbar,\mu})^{-1},
     \end{aligned}
  \end{equation}
where $\partial_{x_j}\varphi_1$ and $\partial_{x_j}^2\varphi_1$ are the derivatives of $\varphi_1$ with respect to $x_j$ once or twice respectively. Notice that due to our choice of $\varphi_1$, we have that 
\begin{equation*}
	\dist \big( \supp(\partial_{x_j}\varphi_1),\supp(\varphi) \big) \geq c \quad\text{and}\quad \dist \big( \supp(\partial_{x_j}^2\varphi_1),\supp(\varphi) \big) \geq c.
\end{equation*}
 Using \eqref{EQ:Comparision_Loc_infty_4} we have by Lemma~\ref{LE:Resolvent_est_local} for all $N>\frac{d}{2}$ that
  \begin{equation}\label{EQ:Comparision_Loc_infty_5}
     \begin{aligned}
     \MoveEqLeft \norm{ \varphi [\varphi_1(z-\mathcal{H}_{\hbar,\mu})^{-1}-(z-H_{\hbar,\mu})^{-1}\varphi_1] }_1
     \\
     &\leq  \big\lVert (z-\mathcal{H}_{\hbar,\mu})^{-1} \big\rVert_{\mathrm{op}}  \sum_{j=1}^d \hbar \big\lVert \varphi (z-H_{\hbar,\mu})^{-1}  Q_j \partial_{x_j}\varphi_1 \big\rVert_1  
     + \hbar^2 \big\lVert \varphi (z-H_{\hbar,\mu})^{-1}  \partial_{x_j}^2\varphi_1 \big\rVert_1 
     \\
     &\leq C_N \frac{\langle z \rangle^{N+\frac{d+1}{2}} \hbar^{2N-d}}{|\im(z)|^{2N+1}}\frac{\hbar+\hbar^2}{|\im(z)|},
     \end{aligned}
  \end{equation}
where $C_N$ depends only on the dimension the numbers $N$, the functions $\varphi$, $\varphi_1$ and the constant $c$. From combing  \eqref{EQ:Comparision_Loc_infty_2},  \eqref{EQ:Comparision_Loc_infty_3} and \eqref{EQ:Comparision_Loc_infty_5} we obtain that
  \begin{equation}\label{EQ:Comparision_Loc_infty_6}
     \big \lVert \varphi [(z-\mathcal{H}_{\hbar,\mu})^{-1}-(z-H_{\hbar,\mu})^{-1}] \big\rVert_1 \leq C_N \frac{\langle z \rangle^{N+\frac{d+1}{2}} \hbar^{2N-d}}{|\im(z)|^{2N+2}}.
  \end{equation}
Combining  \eqref{EQ:Comparision_Loc_infty_1}, \eqref{EQ:Comparision_Loc_infty_6} and using properties of the integral we get for all $N>\frac{d}{2}$ that 
  \begin{equation}\label{EQ:Comparision_Loc_infty_7}
  \begin{aligned}
      \big \lVert \varphi[f(\mathcal{H}_{\hbar,\mu})-f(H_{\hbar,\mu})]  \big \rVert_1 \ &\leq \frac{1}{\pi} \int_\C \big|  \bar{\partial }f(z) \big| \big \lVert \varphi [(z-\mathcal{H}_{\hbar,\mu})^{-1}-(z-H_{\hbar,\mu})^{-1}] \big\rVert_1 \, L(dz)
      \\
      &\leq  C_N \frac{\hbar^{2N-d}}{\pi} \int_\C   \big|\bar{\partial } f(z) \big| \frac{\langle z \rangle^{N+\frac{d+1}{2}} }{|\im(z)|^{2N+2}} \, L(dz)
      \leq \tilde{C}_N \hbar^{2N-d},
      \end{aligned}
  \end{equation}
where the constant $\tilde{C}_N$ depends on  the dimension the numbers $N$, the functions $\varphi$, $\varphi_1$, $f$ and the constant $c$. We have in the last inequality used the properties of the almost analytic extension $\tilde{f}$. The estimate in \eqref{EQ:Comparision_Loc_infty_7} concludes the proof.
\end{proof}
\begin{lemma}\label{LE:Func_com_model_reg}
Let $\mathcal{H}_{\hbar,\mu}$ be an operator acting in $L^2(\R^d)$. Suppose  $\mathcal{H}_{\hbar,\mu}$ satisfies Assumption~\ref{Assumption:local_potential_1} with the open set $\Omega$ and let   $H_{\hbar,\mu,\varepsilon} = (-i\hbar\nabla-\mu a)^2 +V_\varepsilon $ be the associated rough Schr\"odinger operator.  Assume that $\mu\leq\mu_0<1$ and $\hbar\in(0,\hbar_0]$, with $\hbar_0$ sufficiently small.
Let $f\in C_0^\infty(\R)$ and $\varphi\in C_0^\infty(\Omega)$ then it holds that
\begin{equation}\label{LE:EQ:Func_com_model_reg_2}
	\norm{\varphi [f(\mathcal{H}_{\hbar,\mu})-f(H_{\hbar,\mu,\varepsilon})]}_1 \leq C \varepsilon^{5+\kappa} \hbar^{-d}.
\end{equation}
The constant $C_N$ depends on $\supp(f)$ the dimension, the numbers $\norm{f}_{L^\infty(\R)}$, $\norm{\partial^\alpha\varphi}_{L^\infty(\R)}$ for all $\alpha\in\N^d_0$, $\norm{\partial^\alpha a_j}_{L^\infty(\R^d)}$ for all $\alpha\in\N_0^d$ with $|\alpha|\geq1$ and $j\in\{1\dots,d\}$ and $\norm{\partial_x^\alpha V}_{L^\infty(\R^d)}$ for all $\alpha\in\N_0^d$ such that $|\alpha|\leq 5$ and the H\"older constant for $V$.
\end{lemma}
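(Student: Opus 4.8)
The plan is to reduce the comparison between $\mathcal{H}_{\hbar,\mu}$ and the rough operator $H_{\hbar,\mu,\varepsilon}$ to the already-proven comparison between $\mathcal{H}_{\hbar,\mu}$ and the local magnetic operator $H_{\hbar,\mu}$ (Lemma~\ref{LE:Comparision_Loc_infty}), together with a comparison between the two Schr\"odinger operators $H_{\hbar,\mu}$ and $H_{\hbar,\mu,\varepsilon}$ which differ only by the bounded potential $V - V_\varepsilon$. Writing
\begin{equation*}
	\varphi[f(\mathcal{H}_{\hbar,\mu}) - f(H_{\hbar,\mu,\varepsilon})] = \varphi[f(\mathcal{H}_{\hbar,\mu}) - f(H_{\hbar,\mu})] + \varphi[f(H_{\hbar,\mu}) - f(H_{\hbar,\mu,\varepsilon})],
\end{equation*}
the first term is $O(\hbar^N)$ for all $N$ by Lemma~\ref{LE:Comparision_Loc_infty}, which is far better than $\varepsilon^{2+\kappa}\hbar^{-d}$ once $\varepsilon \geq \hbar^{1-\delta}$, so the whole estimate comes down to the second term.

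For the second term I would use the Helffer--Sj\"ostrand formula (Theorem~\ref{THM:Helffer-Sjostrand}): with $\tilde f$ an almost analytic extension of $f$,
\begin{equation*}
	\varphi[f(H_{\hbar,\mu}) - f(H_{\hbar,\mu,\varepsilon})] = -\frac{1}{\pi} \int_\C \bar\partial\tilde f(z)\, \varphi\big[(z - H_{\hbar,\mu})^{-1} - (z - H_{\hbar,\mu,\varepsilon})^{-1}\big]\, L(dz),
\end{equation*}
and then expand the resolvent difference via the second resolvent identity:
\begin{equation*}
	(z - H_{\hbar,\mu})^{-1} - (z - H_{\hbar,\mu,\varepsilon})^{-1} = (z - H_{\hbar,\mu})^{-1} (V - V_\varepsilon)(z - H_{\hbar,\mu,\varepsilon})^{-1}.
\end{equation*}
Here $\|V - V_\varepsilon\|_{L^\infty} \leq C\varepsilon^{2+\kappa}$ by Lemma~\ref{LE:rough_potential_local} (with $k = 2$, $\kappa$ the H\"older exponent). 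To get a trace-norm bound I would insert a cutoff: choose $\varphi_1 \in C_0^\infty(\R^d)$ with $\varphi_1 = 1$ on a neighbourhood of $\supp\varphi$ and $\supp\varphi_1 \subset \Omega'$ for a slightly larger set, split $V - V_\varepsilon = (V-V_\varepsilon)\varphi_1 + (V - V_\varepsilon)(1-\varphi_1)$, and estimate the trace norm of $\varphi(z-H_{\hbar,\mu})^{-1}(V-V_\varepsilon)\varphi_1$ using Lemma~\ref{LE:Resolvent_est_local} (the case of \emph{overlapping} supports actually requires only the local resolvent estimate with $\varphi_2$ replaced by a genuine cutoff; one can also absorb $\varphi_1$ into the localisation and use that $\varphi(z-H_{\hbar,\mu})^{-1}\varphi_1$ is trace class of norm $\lesssim \langle z\rangle^{d/2+N}\hbar^{-d}\hbar^{2N}/d(z)^{2N+1}$). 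The remaining piece with $(1-\varphi_1)$ has disjoint supports from $\varphi$, so by Lemma~\ref{LE:Resolvent_est_local} it contributes $O(\hbar^N\langle z\rangle^{\dots}/d(z)^{\dots})$, which is negligible. Collecting, $\|\varphi[(z-H_{\hbar,\mu})^{-1} - (z-H_{\hbar,\mu,\varepsilon})^{-1}]\|_1 \leq C \varepsilon^{2+\kappa}\hbar^{-d}\langle z\rangle^{M}/|\im z|^{K}$ for suitable $M, K$, and the $z$-integral converges after integrating against $|\bar\partial\tilde f(z)| \leq C_n|\im z|^n$ for $n$ large, yielding the claimed $C\varepsilon^{2+\kappa}\hbar^{-d}$.

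The main obstacle is making the trace-norm estimate of the middle term $\varphi(z - H_{\hbar,\mu})^{-1}(V - V_\varepsilon)(z - H_{\hbar,\mu,\varepsilon})^{-1}$ rigorous: Lemma~\ref{LE:Resolvent_est_local} as stated is for \emph{disjoint} supports, so I need a companion statement giving $\|\varphi_1 (H_\hbar - z)^{-1} \psi\|_1 \lesssim \langle z\rangle^{d/2+N}\hbar^{-d}\langle z\rangle^N\hbar^{2N}/d(z)^{2N+1}$ when $\varphi_1 \in C_0^\infty$, $\psi \in \mathcal{B}^\infty$ with no disjointness assumption — this is implicit in the methods of \cite{MR1343781} and follows from the same iterated-commutator / Sobolev embedding argument (the compact support of $\varphi_1$ alone gives the $\hbar^{-d}$ via Lemma~\ref{LE:Func_cal_est_inf_pon}-type reasoning). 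I would isolate this as the one nontrivial input; once it is in hand the rest is routine bookkeeping of powers of $\langle z\rangle$ and $|\im z|$, exactly as in the proof of Lemma~\ref{LE:Comparision_Loc_infty}. One should also track that all constants depend only on the quantities listed in the statement, in particular only on $\|\partial^\alpha V\|_{L^\infty}$ for $|\alpha|\leq 2$ (through Lemma~\ref{LE:rough_potential_local}) and not on $\varepsilon$.
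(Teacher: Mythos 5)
Your first reduction (triangle inequality through $H_{\hbar,\mu}$, with $\norm{\varphi[f(\mathcal{H}_{\hbar,\mu})-f(H_{\hbar,\mu})]}_1\leq C_N\hbar^N$ from Lemma~\ref{LE:Comparision_Loc_infty}) is exactly the paper's first step, and the source of the factor $\varepsilon^{2+\kappa}$ -- the second resolvent identity combined with $\norm{V-V_\varepsilon}_{L^\infty}\leq C\varepsilon^{2+\kappa}$ inside the Helffer--Sj\"ostrand formula -- is also the same. The divergence, and the gap, is in how the trace norm and the factor $\hbar^{-d}$ are produced. You run the entire argument in trace norm at the resolvent level, and you correctly identify that this requires a trace-norm bound on $\varphi_1(H_{\hbar,\mu}-z)^{-1}\psi$ \emph{without} any disjointness of supports, which is not among the paper's stated lemmas (Lemma~\ref{LE:Resolvent_est_local} is only for separated supports). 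Moreover, the companion estimate you write down, with the factor $\hbar^{2N}/d(z)^{2N}$ retained, cannot be true for overlapping supports: integrating it against $|\bar\partial\tilde f(z)|$ would give $\norm{\varphi f(H_{\hbar,\mu})}_1\lesssim\hbar^{2N-d}$ for every $N$, contradicting the Weyl asymptotics. The gain of powers of $\hbar$ in Lemma~\ref{LE:Resolvent_est_local} comes precisely from the commutator iteration between functions with separated supports; drop the separation and you only get $\hbar^{-d}$ times powers of $\langle z\rangle/d(z)$. So as written the key input of your plan is both unproven and false in the stated form, even though a corrected version (order $\hbar^{-d}$, no $\hbar^{2N}$ gain) is true and would make your route work.

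The paper avoids needing any such resolvent-level trace bound by a factorization trick: pick $f_1\in C_0^\infty(\R)$ with $f_1f=f$ and $\varphi_1\in C_0^\infty(\Omega)$ equal to $1$ on $\supp\varphi$, and write $f(H_{\hbar,\mu})-f(H_{\hbar,\mu,\varepsilon})=[f_1(H_{\hbar,\mu})-f_1(H_{\hbar,\mu,\varepsilon})]f(H_{\hbar,\mu})+f_1(H_{\hbar,\mu,\varepsilon})[f(H_{\hbar,\mu})-f(H_{\hbar,\mu,\varepsilon})]$. Inserting $\varphi_1$ (the cross terms with $1-\varphi_1$ cost only $C_N\hbar^N$ by the disjoint-support part of Lemma~\ref{LE:Func_cal_est_inf_pon}), each summand is then bounded by a trace-class factor $\norm{\varphi_1 f(H_{\hbar,\mu})}_1$ or $\norm{\varphi f_1(H_{\hbar,\mu,\varepsilon})}_1\leq C\hbar^{-d}$, supplied directly by Lemma~\ref{LE:Func_cal_est_inf_pon}, times an \emph{operator-norm} factor $\norm{f(H_{\hbar,\mu})-f(H_{\hbar,\mu,\varepsilon})}_{\mathrm{op}}$ (or the same with $f_1$), and only this operator-norm difference is estimated by Helffer--Sj\"ostrand plus the second resolvent identity, giving $C\varepsilon^{2+\kappa}$ with the trivial bound $|\im z|^{-2}$ on the resolvents. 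I recommend you adopt this decomposition: it uses only the lemmas already available and eliminates the need for the new trace-norm resolvent estimate your plan hinges on.
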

\begin{proof}
Let $H_{\hbar,\mu}$ be the magenetic Schr\"odinger operator associated to $\mathcal{H}_{\hbar,\mu}$. We then have that
\begin{equation}\label{EQ:Func_com_model_reg_1}
	\norm{\varphi [f(\mathcal{H}_{\hbar,\mu})-f(H_{\hbar,\mu,\varepsilon})]}_1 \leq 	\norm{\varphi [f(\mathcal{H}_{\hbar,\mu})-f(H_{\hbar,\mu})]}_1  +  \norm{\varphi [f(H_{\hbar,\mu})-f(H_{\hbar,\mu,\varepsilon})]}_1.
\end{equation}
By Lemma~\ref{LE:Comparision_Loc_infty} it follows for all $N\in\N$ that
\begin{equation}\label{EQ:Func_com_model_reg_2}
	\norm{\varphi [f(\mathcal{H}_{\hbar,\mu})-f(H_{\hbar,\mu})]}_1 \leq C_N \hbar^N.
\end{equation}
To estimate the second term on the right-hand side of \eqref{EQ:Func_com_model_reg_1} let $f_1\in C_0^\infty(\R)$ such that $f_1(t)f(t)=f(t)$ for all $t\in \R$. Moreover, let $\varphi_1\in C_0^\infty (\Omega)$ such that $\varphi_1(x) = 1$ for all $x\in\overline{\supp(\varphi)}$. We then have for each $N\in\N_0$ that
\begin{equation}\label{EQ:Func_com_model_reg_3}
	\begin{aligned}
	\MoveEqLeft \norm{\varphi [f(H_{\hbar,\mu})-f(H_{\hbar,\mu,\varepsilon})]}_1 
	\\
	\leq{}& \norm{\varphi [f_1(H_{\hbar,\mu})-f_1(H_{\hbar,\mu,\varepsilon})] \varphi_1 f(H_{\hbar,\mu})}_1 +  \norm{\varphi f_1(H_{\hbar,\mu,\varepsilon})[f(H_{\hbar,\mu}) -f(H_{\hbar,\mu,\varepsilon})]}_1+C_N\hbar^N
	\\
	\leq{}&  C\hbar^{-d}\big[ \norm{f_1(H_{\hbar,\mu})-f_1(H_{\hbar,\mu,\varepsilon})}_{\mathrm{op}} + \norm{f(H_{\hbar,\mu})-f(H_{\hbar,\mu,\varepsilon})}_{\mathrm{op}}\big]+C_N\hbar^N,
	\end{aligned}
\end{equation}
where we have used Lemma~\ref{LE:Func_cal_est_inf_pon} three times. We can use this as $V,V_\varepsilon\in L^\infty(\R^d)$ and the functions $\varphi$ and $1-\varphi_1$ have disjoint support. Applying Theorem~\ref{THM:Helffer-Sjostrand} and the resolvent formalism we get that
\begin{equation}\label{EQ:Func_com_model_reg_4}
	\begin{aligned}
	  \norm{f(H_{\hbar,\mu})-f(H_{\hbar,\mu,\varepsilon})}_{\mathrm{op}} 
	  &\leq \frac{1}{\pi} \int_{\C} | \bar{\partial }\tilde{f}(z)| \norm{(H_{\hbar,\mu}-z)^{-1}-(H_{\hbar,\mu,\varepsilon} -z)^{-1}}_{\mathrm{op}} \, L(dz) 
	  \\
	  &\leq \frac{1}{\pi} \int_{\C}  \frac{| \bar{\partial }\tilde{f}(z)| }{|\im(z)|^2} \norm{V-V_{\varepsilon}}_{\mathrm{op}} \, L(dz) 
	  \\
	  &\leq C \varepsilon^{5+\kappa},
	\end{aligned}
\end{equation}
where we in the last inequality have used that $\tilde{f}$ is an almost analytic extension and have compact support and  Lemma~\ref{LE:rough_potential_local}. Analogously we obtain that
\begin{equation}\label{EQ:Func_com_model_reg_5}
	\begin{aligned}
	  \norm{f_1(H_{\hbar,\mu})-f_1(H_{\hbar,\mu,\varepsilon})}_{\mathrm{op}} 
	  \leq C \varepsilon^{5+\kappa}.
	\end{aligned}
\end{equation}
Combining the estimates in \eqref{EQ:Func_com_model_reg_1},  \eqref{EQ:Func_com_model_reg_2}, \eqref{EQ:Func_com_model_reg_3},  \eqref{EQ:Func_com_model_reg_4} and \eqref{EQ:Func_com_model_reg_5} we obtain the estimate in \eqref{LE:EQ:Func_com_model_reg_2}. This concludes the proof.
\end{proof}
Before we proceed we will need a technical Lemma. This Lemma gives us a version of the estimate \eqref{EQ:PRO:Tauberian_1} from Proposition~\ref{PRO:Tauberian}.  
\begin{lemma}\label{LE:assump_est_func_loc}
Let $H_{\hbar,\mu,\varepsilon} = (-i\hbar\nabla-\mu a)^2 +V_\varepsilon $ be a rough Schr\"odinger operator acting in $L^2(\R^d)$ of regularity $\tau\geq2$  with $\mu\leq\mu_0<1$ and $\hbar\in(0,\hbar_0]$, $\hbar_0$ sufficiently small. Assume that $a_j\in C_0^\infty(\R^d)$ for all $j\in\{1,\dots,d\}$ and $V_\varepsilon \in C_0^\infty(\R^d)$. Suppose there is an open set $\Omega \subset \supp(V_\varepsilon)$ and a $c>0$ such that 
\begin{equation*}
	|V_\varepsilon(x)| +\hbar^{\frac{2}{3}} \geq c \qquad\text{for all $x\in \Omega$}.
\end{equation*}
Let $\chi_\hbar(t)$ be the function from Remark~\ref{RE:mollyfier_def}, $f\in C_0^\infty(\R)$ and $\varphi\in C_0^\infty(\Omega)$ then it holds for $s\in\R$ that
\begin{equation*}
	\norm{\varphi f(H_{\hbar,\mu,\varepsilon}) \chi_\hbar(H_{\hbar,\mu,\varepsilon}-s)f(H_{\hbar,\mu,\varepsilon})  \varphi }_1  \leq C \hbar^{-d}.
\end{equation*}
The constant $C_N$ depends on $\supp(f)$, the dimension and the numbers $\norm{f}_{L^\infty(\R)}$, $\norm{\partial^\alpha\varphi}_{L^\infty(\R)}$ for all $\alpha\in\N^d_0$, $\norm{\partial^\alpha a_j}_{L^\infty(\R^d)}$ for all $\alpha\in\N_0^d$  and $j\in\{1\dots,d\}$,   $\norm{V_\varepsilon}_{L^\infty(\R^d)}$ and the numbers $C_\alpha$ from Assumption~\ref{Assumption:Rough_schrodinger}.
\end{lemma}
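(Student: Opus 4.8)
The strategy is to reduce the claimed trace bound to an application of Lemma~\ref{LE:verification_of_assumption}, whose hypotheses concern a compactly supported symbol $\theta\in C_0^\infty(\R^{2d})$ rather than a spatial cut-off $\varphi\in C_0^\infty(\Omega)$. First I would choose a symbol $\theta\in C_0^\infty(\Omega\times\R^d)$ which equals $1$ on a neighbourhood of $\supp(\varphi)\times\{p:|p|\leq R\}$ for $R$ large enough to absorb the energy window dictated by $\supp(f)$ and $\supp(\chi_\hbar)$; concretely, since $f$ is supported in a bounded set and $V_\varepsilon$ is bounded on $\Omega$, the functional-calculus expansion of $f(H_{\hbar,\mu,\varepsilon})$ from Theorem~\ref{THM:func_calc} shows that $\varphi f(H_{\hbar,\mu,\varepsilon})$ is, up to an $\mathcal O(\hbar^N)$ error in trace norm, localised in momentum to a fixed compact set. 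Writing $\varphi = \varphi\,\OpW(\theta) + \varphi(\id - \OpW(\theta))$ (after passing through $f(H_{\hbar,\mu,\varepsilon})$), the second piece is the composition of a spatial cut-off with a $\hbar$-$\Psi$DO whose symbol vanishes on $\supp(\varphi)\times B_R$, so by Lemma~\ref{LE:disjoint_supp_PDO} together with Theorem~\ref{THM:func_calc} it contributes $\mathcal O(\hbar^N)$ in trace norm.

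Having replaced $\varphi f(H_{\hbar,\mu,\varepsilon})$ by $\OpW(\theta')f(H_{\hbar,\mu,\varepsilon})$ for a suitable $\theta'\in C_0^\infty(\Omega\times\R^d)$ (with the spatial cut-off $\varphi$ reinserted as $\varphi\OpW(\theta')=\OpW(\theta')+\mathcal O(\hbar^N)$ by choosing $\varphi_1$ with $\varphi_1\varphi=\varphi$ appropriately, or simply by noting $\theta'$ can be chosen supported where $\varphi=1$), the expression becomes
\begin{equation*}
\norm{\OpW(\theta') f(H_{\hbar,\mu,\varepsilon}) \chi_\hbar(H_{\hbar,\mu,\varepsilon}-s) f(H_{\hbar,\mu,\varepsilon}) \OpW(\theta')}_1 + \mathcal O(\hbar^N),
\end{equation*}
up to also commuting $\varphi$ past $f(H_{\hbar,\mu,\varepsilon})$, which again costs $\mathcal O(\hbar^N)$ by Lemma~\ref{LE:disjoint_supp_PDO}. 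The non-critical hypothesis $|V_\varepsilon(x)|+\hbar^{2/3}\geq c$ on $\Omega$ passes directly to $\supp(\theta')\subset\Omega\times\R^d$, so by continuity there is $\eta>0$ with $|\nu-V_\varepsilon(x)|+\hbar^{2/3}\geq c/2$ for $(x,p)\in\supp(\theta')$ and $\nu\in(-2\eta,2\eta)$; shrinking the support of $f$ if necessary so that $\supp(f)\subset(-\eta,\eta)$ (the part of $f$ supported away from $0$ is handled by the smooth functional calculus and never sees $\chi_\hbar$ at small energies), Lemma~\ref{LE:verification_of_assumption} applies verbatim and yields the bound $C\hbar^{-d}$.

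The main technical obstacle is the momentum localisation step: one must verify that $\varphi f(H_{\hbar,\mu,\varepsilon})$ really is negligible outside a fixed momentum ball, which requires combining the symbol calculus of Theorem~\ref{THM:func_calc} (all symbols $a^f_{\varepsilon,j}$ are compactly supported in $p$ because $f$ is compactly supported and $(p-\mu a(x))^2+V_\varepsilon(x)\to\infty$ as $|p|\to\infty$ on the bounded set where $\varphi$ or $a$ live, uniformly in $\varepsilon$ and in $\mu\leq\mu_0$) with the disjoint-support estimates of Lemma~\ref{LE:disjoint_supp_PDO}; a mild subtlety is that $a_j$ is only assumed smooth and compactly supported, but on $\supp(\varphi)$ it is bounded with all derivatives, so the coercivity of the symbol in $p$ is uniform there. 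A second minor point is splitting $f$ into a piece supported near $0$ and a piece supported away from $0$; for the latter, $\chi_\hbar(H_{\hbar,\mu,\varepsilon}-s)$ composed with spectral projections away from the support of the relevant functions is $\mathcal O(\hbar^\infty)$ by the rapid decay of $\chi_\hbar$ away from the diagonal, handled exactly as in the proof of Theorem~\ref{THM:Loc_rough_prob}. Once these localisations are in place the result is an immediate corollary of Lemma~\ref{LE:verification_of_assumption}.
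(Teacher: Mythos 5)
Your proposal follows the same overall route as the paper: insert a compactly supported phase--space cut-off $\theta$ tracking the momentum localisation of $\varphi f(H_{\hbar,\mu,\varepsilon})$ (using that $a_j$, $V_\varepsilon$ are bounded and $f$ has compact support to obtain the $p$-localisation), control the error with Lemma~\ref{LE:disjoint_supp_PDO} and Theorem~\ref{THM:func_calc}, and reduce to Lemma~\ref{LE:verification_of_assumption}. That is precisely the paper's argument, and your identification of the momentum-localisation step as the substantive input is correct. Two remarks on the differences.

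First, the intermediate replacement of $\varphi$ by a symbol $\OpW(\theta')$ (``$\varphi\OpW(\theta')=\OpW(\theta')+\mathcal{O}(\hbar^N)$'') is an unnecessary detour. The paper simply keeps the multiplication operators $\varphi$ on the outside and, having reduced to $\varphi\OpW(\theta)f(H)\chi_\hbar(H-s)f(H)\OpW(\theta)\varphi$ up to an $\mathcal{O}(\hbar^N)$ trace-norm error, applies Lemma~\ref{LE:verification_of_assumption} to the inner factor and uses $\lVert\varphi A\varphi\rVert_1\leq\lVert\varphi\rVert_{L^\infty}^2\lVert A\rVert_1$. No conversion of $\varphi$ into a $\Psi$DO is needed.

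Second, the way you deal with the hypothesis $f\in C_0^\infty((-\eta,\eta))$ in Lemma~\ref{LE:verification_of_assumption} is not sound. Your claim that ``the part of $f$ supported away from $0$ \dots never sees $\chi_\hbar$ at small energies'' would require $s$ to be restricted to a small neighbourhood of $0$, but the lemma asserts the bound uniformly for $s\in\R$. For $s$ in $\supp(f)$ but away from $0$, the energy surface at level $s$ need not satisfy a non-critical condition (only $|V_\varepsilon|+\hbar^{2/3}\geq c$ is assumed, which is a condition at energy $0$), so the splitting into $f=f_1+f_2$ does not by itself reduce the problem to Lemma~\ref{LE:verification_of_assumption} for the far-from-zero piece when $s$ sits there. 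Note that the paper's own proof does not address this point either---it simply invokes Lemma~\ref{LE:verification_of_assumption}---so you have identified a real subtlety, but your proposed resolution does not close it. If you want to fill this in, the cleaner route is either to restrict the lemma's statement to $f$ supported near $0$ (matching the $\eta$ of the non-critical condition, which suffices for how the lemma is used via Proposition~\ref{PRO:Tauberian}), or to argue separately that for $s$ outside a small neighbourhood of $0$ and simultaneously inside $\supp f$ one still has a suitable trace bound; the hand-wave about $\chi_\hbar$ ``not seeing'' $f_1$ does not supply this.
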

\begin{proof}
Under the assumptions of the lemma we have that $a$ and $V_\varepsilon$ satisfies Assumption~\ref{Assumption:Rough_schrodinger}.  Hence if we can find $\theta\in C_0^\infty(\Omega \times \R^d)$ such that for all $N\in\N$ we have that
\begin{equation}\label{EQ:assump_est_func_loc_1}
	\begin{aligned}
	\MoveEqLeft \big\lVert \varphi f(H_{\hbar,\mu,\varepsilon}) \chi_\hbar(H_{\hbar,\mu,\varepsilon}-s)f(H_{\hbar,\mu,\varepsilon})  \varphi  
	\\
	&- \varphi \OpW(\theta) f(H_{\hbar,\mu,\varepsilon}) \chi_\hbar(H_{\hbar,\mu,\varepsilon}-s)f(H_{\hbar,\mu,\varepsilon}) \OpW(\theta)  \varphi \big\rVert_1  \leq C_N \hbar^{N},
	\end{aligned}
\end{equation}
where $C_N$ has the dependency as stated in the lemma. Then the result will follow from Lemma~\ref{LE:verification_of_assumption}. Since this gives us that
\begin{equation*}
	\begin{aligned}
	\norm{\varphi \OpW(\theta) f(H_{\hbar,\mu,\varepsilon}) \chi_\hbar(H_{\hbar,\mu,\varepsilon}-s)f(H_{\hbar,\mu,\varepsilon}) \OpW(\theta)  \varphi }_1  \leq C \hbar^{-d}.
	\end{aligned}
\end{equation*}
In order to find such a $\theta$ we observe that since $V_\varepsilon$ and $a_j$ are bounded for all $j\in\{1,\dots,d\}$ there exist a $K>1$ such that
\begin{equation*}
	f(a_{\varepsilon,0}^f(x,p)) =0 \qquad\text{if $|p|\geq K-1$},
\end{equation*}
where we have also used that $f$ is compactly supported and the notation $a_{\varepsilon,0}^f(x,p) = f( (p- \mu a(x))^2  + V_\varepsilon(x) )$. Hence we will choose $\theta\in C_0^\infty(\Omega\times B(0,K+1))$ such that
\begin{equation*}
	\supp(\varphi)\cap \supp(1-\theta) \cap \supp(f(a_{\varepsilon,0}^f) )=\emptyset.
\end{equation*}
From applying Lemma~\ref{LE:disjoint_supp_PDO} and Theorem~\ref{THM:func_calc}  we obtain that
\begin{equation}\label{EQ:assump_est_func_loc_2}
	\begin{aligned}
	\norm{\varphi (1- \OpW(\theta)) f(H_{\hbar,\mu,\varepsilon}) }_{\mathrm{op}} \leq C_N\hbar^N.
	\end{aligned}
\end{equation}
By Theorem~\ref{THM:thm_est_tr} and Lemma~\ref{LE:Func_cal_est_inf_pon} we have that $\norm{\OpW(\theta)}_1\leq C\hbar^{-d}$ and  $\norm{\varphi f(H_{\hbar,\mu,\varepsilon}) }_{1}\leq C\hbar^{-d}$ respectively. Hence we get that
\begin{equation}
	\begin{aligned}
	\MoveEqLeft \big\lVert \varphi f(H_{\hbar,\mu,\varepsilon}) \chi_\hbar(H_{\hbar,\mu,\varepsilon}-s)f(H_{\hbar,\mu,\varepsilon})  \varphi  
	- \varphi \OpW(\theta) f(H_{\hbar,\mu,\varepsilon}) \chi_\hbar(H_{\hbar,\mu,\varepsilon}-s)f(H_{\hbar,\mu,\varepsilon}) \OpW(\theta)  \varphi \big\rVert_1
	\\
	\leq {}& \big\lVert \varphi (1-\OpW(\theta))  f(H_{\hbar,\mu,\varepsilon}) \chi_\hbar(H_{\hbar,\mu,\varepsilon}-s)f(H_{\hbar,\mu,\varepsilon})  \varphi  \big\rVert_1
	\\
	&+\big\lVert \varphi \OpW(\theta) f(H_{\hbar,\mu,\varepsilon}) \chi_\hbar(H_{\hbar,\mu,\varepsilon}-s)f(H_{\hbar,\mu,\varepsilon})(1- \OpW(\theta))  \varphi \big\rVert_1  
	\\
	\leq  {}&  C\hbar^{-d} \norm{\varphi (1- \OpW(\theta)) f(H_{\hbar,\mu,\varepsilon}) }_{\mathrm{op}} \leq C_N\hbar^N.
	\end{aligned}
\end{equation}
where we have used \eqref{EQ:assump_est_func_loc_2}. This establishes \eqref{EQ:assump_est_func_loc_1} and concludes the proof.
\end{proof}
In the same manner as the previous lemma, we will prove an asymptotic formula for the case with a compactly supported potential.
\begin{lemma}\label{LE:asymp_est_func_loc}
Let $H_{\hbar,\mu,\varepsilon} = (-i\hbar\nabla-\mu a)^2 +V_\varepsilon $ be a rough Schr\"odinger operator acting in $L^2(\R^d)$ of regularity $\tau\geq2$ with $\mu\leq\mu_0<1$ and $\hbar\in(0,\hbar_0]$, $\hbar_0$ sufficiently small. Assume that $a_j\in C_0^\infty(\R^d)$ for all $j\in\{1,\dots,d\}$ and $V_\varepsilon \in C_0^\infty(\R^d)$. Suppose there is an open set $\Omega \subset \supp(V_\varepsilon)$ and a $c>0$ such that 
\begin{equation*}
	|V_\varepsilon(x)| +\hbar^{\frac{2}{3}} \geq c \qquad\text{for all $x\in \Omega$}.
\end{equation*}
Then for $g\in C^{\infty,\gamma}(\R)$ with $\gamma\in[0,1]$ and any $\varphi\in C_0^\infty(\Omega)$ it holds that
\begin{equation*}
	  \Big|\Tr[\varphi g(H_{\hbar,\mu,\varepsilon})] - \frac{1}{(2\pi\hbar)^d} \int_{\R^{2d}}g(p^2+V_\varepsilon(x))\varphi(x)  \,dx dp \Big| \leq C \hbar^{1+\gamma-d}.
\end{equation*}
The constant $C_N$ depends on $\supp(f)$, the dimension and the numbers $\norm{f}_{L^\infty(\R)}$, $\norm{\partial^\alpha\varphi}_{L^\infty(\R)}$ for all $\alpha\in\N^d_0$, $\norm{\partial^\alpha a_j}_{L^\infty(\R^d)}$ for all $\alpha\in\N_0^d$ with $|\alpha|\geq1$ and $j\in\{1\dots,d\}$,   $\norm{V_\varepsilon}_{L^\infty(\R^d)}$ and the numbers $C_\alpha$ from Assumption~\ref{Assumption:Rough_schrodinger}.
\end{lemma}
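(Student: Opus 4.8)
The strategy is to reduce this statement to Theorem~\ref{THM:Loc_rough_prob}, which already provides the microlocalised asymptotic formula
\[
  \Tr[\OpW(\theta) g(H_{\hbar,\mu,\varepsilon})] = \frac{1}{(2\pi\hbar)^d}\int_{\R^{2d}} g((p-\mu a(x))^2 + V_\varepsilon(x))\theta(x,p)\,dx\,dp + \mathcal{O}(\hbar^{1+\gamma-d}),
\]
for any $\theta\in C_0^\infty(\Omega\times\R^d)$. The only gap between that statement and what we want is that here the localisation in $x$ is given by multiplication by $\varphi\in C_0^\infty(\Omega)$ rather than by a pseudodifferential operator $\OpW(\theta)$ with $\theta$ compactly supported also in the $p$-variable, and the symbol $(p-\mu a)^2+V_\varepsilon$ should be replaced by $p^2+V_\varepsilon$ in the final formula. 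Both discrepancies are handled by cutting off in the momentum variable.

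\textbf{Step 1: momentum cut-off.} Since $g\in C^{\infty,\gamma}(\R)$ vanishes on $[C,\infty)$ and $a_j, V_\varepsilon$ are bounded, there is $K>0$ so that $g((p-\mu a(x))^2+V_\varepsilon(x)) = 0$ whenever $|p|\geq K-1$ (uniformly in $x$, using $\mu\leq\mu_0<1$). Choose $\theta\in C_0^\infty(\Omega\times B(0,K+1))$ with $\theta = 1$ on $\overline{\supp(\varphi)}\times B(0,K)$, so that in particular $\varphi\,\theta = \varphi$ as multiplication operators after the natural identification, and more importantly
\[
  \supp(\varphi)\cap\supp(1-\theta)\cap\supp\big(g((p-\mu a)^2+V_\varepsilon)\big) = \emptyset.
\]
Using the functional calculus of Theorem~\ref{THM:func_calc} applied to an $f\in C_0^\infty(\R)$ with $fg=g$ (possible since $H_{\hbar,\mu,\varepsilon}$ is lower semibounded), together with Lemma~\ref{LE:disjoint_supp_PDO} for the disjoint-support estimate, one gets $\norm{\varphi(1-\OpW(\theta))g(H_{\hbar,\mu,\varepsilon})}_1 \leq C_N\hbar^N$ for all $N$ — this is the same argument as in the proof of Lemma~\ref{LE:assump_est_func_loc}, replacing $f$ by $g$ and keeping track of the trace norm using $\norm{\varphi g(H_{\hbar,\mu,\varepsilon})}_1\leq C\hbar^{-d}$ (which follows from Lemma~\ref{LE:Func_cal_est_inf_pon} since $g$ itself is, up to a compactly supported smooth function, in the functional calculus). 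Hence
\[
  \Tr[\varphi\, g(H_{\hbar,\mu,\varepsilon})] = \Tr[\varphi\,\OpW(\theta)\, g(H_{\hbar,\mu,\varepsilon})] + \mathcal{O}(\hbar^\infty).
\]

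\textbf{Step 2: reduce $\varphi\OpW(\theta)$ to a single $\OpW$.} The product $\varphi\OpW(\theta)$ is again a rough $\hbar$-$\Psi$DO: by Theorem~\ref{THM:composition-weyl-thm} (with the multiplication operator $\varphi$ viewed as $\OpW(\varphi)$) we have $\varphi\OpW(\theta) = \OpW(\varphi\theta) + \hbar\OpW(c_{\varepsilon,1}) + \cdots$, where all terms are supported in $\Omega\times B(0,K+1)$ and the lower-order terms contribute $\mathcal{O}(\hbar^{1-d})$ or better to the trace after composing with $g(H_{\hbar,\mu,\varepsilon})$ and invoking Theorem~\ref{THM:thm_est_tr} together with the structure from Theorem~\ref{THM:func_calc}. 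Thus it suffices to compute $\Tr[\OpW(\varphi\theta)g(H_{\hbar,\mu,\varepsilon})]$, and since $\varphi\theta\in C_0^\infty(\Omega\times\R^d)$, Theorem~\ref{THM:Loc_rough_prob} applies directly and gives
\[
  \Tr[\OpW(\varphi\theta)\,g(H_{\hbar,\mu,\varepsilon})] = \frac{1}{(2\pi\hbar)^d}\int_{\R^{2d}} g((p-\mu a(x))^2+V_\varepsilon(x))\,\varphi(x)\theta(x,p)\,dx\,dp + \mathcal{O}(\hbar^{1+\gamma-d}).
\]

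\textbf{Step 3: clean up the phase-space integral.} On $\supp(\varphi\theta)$ we have $\theta\equiv 1$ wherever the integrand $g((p-\mu a)^2+V_\varepsilon)$ is nonzero (by the choice of $K$ in Step 1), so the $\theta$ may be dropped. It remains to replace $(p-\mu a(x))^2$ by $p^2$: for each fixed $x$, translating $p\mapsto p+\mu a(x)$ is a measure-preserving change of variables on $\R^d_p$ that leaves $\varphi(x)$ untouched, hence
\[
  \int_{\R^{2d}} g((p-\mu a(x))^2+V_\varepsilon(x))\,\varphi(x)\,dx\,dp = \int_{\R^{2d}} g(p^2+V_\varepsilon(x))\,\varphi(x)\,dx\,dp.
\]
Combining the three steps yields the claimed estimate, with the constant depending only on the quantities listed (those are exactly the data entering Theorem~\ref{THM:Loc_rough_prob}, Theorem~\ref{THM:func_calc}, and Lemma~\ref{LE:disjoint_supp_PDO}).

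\textbf{Main obstacle.} The technical heart is Step 1, i.e. showing that the multiplicative cut-off $\varphi$ and the microlocal cut-off $\OpW(\theta)$ can be interchanged modulo $\mathcal{O}(\hbar^\infty)$ in trace norm, uniformly while $g$ is merely in $C^{\infty,\gamma}$ rather than $C_0^\infty$. The point is that $g$ is not compactly supported at $-\infty$, so one must first split $g = g_1 + g_2$ with $g_1\in C_0^\infty(\R)$ and $g_2$ supported near the origin, handle $g_1$ by Theorem~\ref{THM:func_calc} and Lemma~\ref{LE:disjoint_supp_PDO} exactly as in Lemma~\ref{LE:assump_est_func_loc}, and control the $g_2$-part by the same disjoint-support mechanism after inserting an auxiliary $f\in C_0^\infty$ with $fg_2=g_2$; the non-critical hypothesis $|V_\varepsilon(x)|+\hbar^{2/3}\geq c$ on $\Omega$ is what guarantees the relevant supports stay separated. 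Everything else is a bookkeeping exercise in the rough $\hbar$-$\Psi$DO calculus already set up in Section~\ref{sec:Rough_pseudo_diff_op}.
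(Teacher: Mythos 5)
Your proposal is correct and follows essentially the same route as the paper's proof: insert a momentum cut-off $\OpW(\theta)$ via the disjoint-support argument of Lemma~\ref{LE:assump_est_func_loc}, apply Theorem~\ref{THM:Loc_rough_prob}, and then remove $\theta$ and the vector potential from the phase-space integral using the support properties of $\theta$ and the translation $p\mapsto p+\mu a(x)$. The only point where you are more explicit than the paper is the composition $\varphi\OpW(\theta)=\OpW(\varphi\theta)+\mathcal{O}(\hbar)$; just note that the subleading composition terms must actually contribute $\mathcal{O}(\hbar^{\infty})$ rather than merely ``$\mathcal{O}(\hbar^{1-d})$ or better'' (which they do, since $\partial_p\theta$ is supported away from the symbol of $g(H_{\hbar,\mu,\varepsilon})$), because $\mathcal{O}(\hbar^{1-d})$ alone would not suffice when $\gamma>0$.
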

\begin{proof}
Since all quantities are Gauge invariant we can start by performing a Gauge transform such that the supremums norm of the magnetic vector potential is uniformly bounded. 

As in the proof of Lemma~\ref{LE:assump_est_func_loc} we let $\theta\in C_0^\infty(\Omega\times B(0,K+1))$ such that
\begin{equation*}
	\supp(\varphi)\cap \supp(1-\theta) \cap \supp(f(a_{\varepsilon,0}^f) )=\emptyset,
\end{equation*}
where $a_{\varepsilon,0}^f(x,p) = f( (p- \mu a(x))^2  + V_\varepsilon(x) )$. Then as in the proof of Lemma~\ref{LE:assump_est_func_loc} we then get for all $N\in\N$ that
\begin{equation}\label{EQ:asymp_est_func_loc_1}
	  \Tr[\varphi g(H_{\hbar,\mu,\varepsilon})] =  \Tr[\varphi \OpW(\theta) g(H_{\hbar,\mu,\varepsilon})]  + \mathcal{O}(\hbar^N).
\end{equation}
This choice of $\theta$ ensures the assumptions of Theorem~\ref{THM:Loc_rough_prob} is satisfied. Hence we get that
\begin{equation}\label{EQ:asymp_est_func_loc_2}
	  \Big|\Tr[\varphi \OpW(\theta) g(H_{\hbar,\mu})] - \frac{1}{(2\pi\hbar)^d} \int_{\R^{2d}}g((p- \mu a(x))^2+V_\varepsilon(x))\varphi(x)\theta(x,p)  \,dx dp \Big| \leq C \hbar^{1+\gamma-d},
\end{equation}
From the support properties of $\theta$ we have that
\begin{equation}\label{EQ:asymp_est_func_loc_3}
	   \int_{\R^{2d}}g((p- \mu a(x))^2+V_\varepsilon(x))\varphi(x)\theta(x,p)  \,dx dp =    \int_{\R^{2d}}g(p^2+V_\varepsilon(x))\varphi(x)  \,dx dp.
\end{equation}
From combining \eqref{EQ:asymp_est_func_loc_1}, \eqref{EQ:asymp_est_func_loc_2} and \eqref{EQ:asymp_est_func_loc_3} we obtain the desired estimate. This concludes the proof.
\end{proof}
To compare the trace norm of certain operators we will need the following technical lemma. The lemma is an ``extension'' to Lemma~\ref{LE:Func_cal_est_inf_pon} under additional regularity assumptions on the potential. 

\begin{lemma}\label{LE:localisation_propagator}
Let $\gamma\in[0,1]$ and $H_{\hbar,\mu} = (-i\hbar\nabla-\mu a)^2 +V$ be a Schr\"odinger operator acting in $L^2(\R^d)$ and assume that $a_j\in \mathcal{B}^\infty (\R^d)$ for $j\in\{1,\dots,d\}$ and that $V\in C_0^{k,\kappa}(\R^d)$ with $k\in\N$ and $\kappa\in(0,1]$ such that $k=5$ and $\kappa>\gamma$ if $\gamma<1$ and $k=6$ and $\kappa>0$ if $\gamma=1$.  Moreover, suppose that $\mu\leq\mu_0<1$ and $\hbar\in(0,\hbar_0]$, with $\hbar_0$ sufficiently small. Let $\theta_1\in C^\infty_0(\R^{2d})$ and $\theta_2\in \mathcal{B}^{\infty}(\R^{2d})$ such that
\begin{equation}\label{support_conditions}
	\dist\big\{\supp(\theta_1),\supp(\theta_2)\big\} \geq c>0,
\end{equation}	
and let $f\in C_0^\infty(\R)$. Then there exists $T_1>0$ sufficiently small such that 
\begin{equation*}
	\norm{\OpW(\theta_2) e^{it\hbar^{-1} H_{\hbar,\mu} } f(H_{\hbar,\mu}) \OpW(\theta_1)}_{\mathrm{op}} \leq C \hbar^{3 +\gamma},
\end{equation*}
uniformly for $t\in[-T_1,T_1]$.
The constant $C$ depends on $\supp(f)$, the dimension, the numbers $N$, $\norm{f}_{L^\infty(\R^d)}$, $\norm{\partial^\alpha\varphi_1}_{L^\infty(\R^d)}$, $\norm{\partial^\alpha\varphi_2}_{L^\infty(\R^d)}$ for all $\alpha\in\N_0^d$ and the constant $c$.
\end{lemma}
The lemma can be interpreted as a consequence of Bohr's correspondence principle. Indeed, if we have two sets in phase space that satisfy \eqref{support_conditions} and one is bounded. Then we could consider a particle starting from the bounded set. Then for any classical Hamiltonian flow, there will be a time $T$ such that if $t\leq T$ the particle will not have entered the second set. This is the intuition behind the above lemma. There is also a version of this result in \cite[Lemma 5.1]{MR1272980} for $\hbar$-$\Psi$DO's. 
\begin{proof}
Firstly, we note that in the case $t=0$ the result follows from Lemma~\ref{LE:Func_cal_est_inf_pon}. Hence we may assume that $t\neq0$. We will in the following assume $0<t\leq T_1$. The case $-T_1\geq t >0$ is proven analogously.
We let $\psi\in C_0^\infty(\R)$ such that $\psi(s)=1$ for $|s|\leq1$ and $\psi(s)=0$ for $|s|\geq2$. Moreover, let $M$ be a sufficiently large constant which will be fixed later
and put
\begin{equation*}
	\psi_{\mu_1}(z) = \psi\Big(\tfrac{\im(z)}{\mu_1}\Big),
\end{equation*}
where $\mu_1=M\hbar\log(\frac{1}{\hbar})$. Moreover, we also let $\tilde{f}$ be an almost analytic extension of $f$ as described in Definition~\ref{Def:almost_analytic_function}. For the product $\tilde{f}\psi_{\mu_1}$ we have that
\begin{equation}\label{EQ:localisation_propagator_1}
	|\bar{\partial}(\tilde{f}\psi_{\mu_1})| 
	\leq  C \mu_1^{-1} \boldsymbol{1}_{[-2,-1]}(\tfrac{\im(z)}{\mu_1}) +
	C_n \psi_{\mu_1}(z) \abs{\im(z)}^n + C\mu_1^{-1} \boldsymbol{1}_{[1,2]}(\tfrac{\im(z)}{\mu_1}) 
\end{equation}
for all $n\in\N$, where we have used the properties of $\tilde{f}$. Since we have that $e^{it\hbar^{-1}z}$ is holomorphic we get the following identity from the Helffer-Sj\"ostrand formula (Theorem~\ref{THM:Helffer-Sjostrand})
\begin{equation}\label{EQ:localisation_propagator_2}
	\begin{aligned}
	\MoveEqLeft \OpW(\theta_2) e^{it\hbar^{-1} H_{\hbar,\mu} } f(H_{\hbar,\mu}) \OpW(\theta_1)
	\\
	&= -\frac{1}{\pi} \int_{\C} \bar{\partial}[\tilde{f}\psi_{\mu_1}](z)e^{it\hbar^{-1}z} \OpW(\theta_2) (z-H_{\hbar,\mu})^{-1} \OpW(\theta_1) \,L(dz),
	\end{aligned}
\end{equation}
where we have used the linearity of the integral. Combining the estimate in \eqref{EQ:localisation_propagator_1} and the identity in \eqref{EQ:localisation_propagator_2} we obtain the estimate
\begin{equation}\label{EQ:localisation_propagator_3}
	\begin{aligned}
	\MoveEqLeft \lVert \OpW(\theta_2) e^{it\hbar^{-1} H_{\hbar,\mu} } f(H_{\hbar,\mu}) \OpW(\theta_1)  \rVert_{\mathrm{op}} 
	\\
	\leq{}& \frac{C}{\mu_1} \int_{\substack{-\eta <\re(z)
        <\eta \\ -2\mu_1\leq \im(z)\leq-\mu_1}} e^{- t \hbar^{-1} \im(z)} \big\lVert \OpW(\theta_2)(z-H_{\hbar,\mu})^{-1} \OpW(\theta_1) \big\rVert_{\mathrm{op}} \,L(dz)
        \\
	&+ C_n \int_{\substack{-\eta <\re(z)
        <\eta \\ -2\mu_1\leq \im(z)\leq2\mu_1}} \abs{\im(z)}^n  e^{- t \hbar^{-1} \im(z)} \big\lVert \OpW(\theta_2) (z-H_{\hbar,\mu})^{-1} \OpW(\theta_1)\big\rVert_{\mathrm{op}} \,L(dz)
         \\
	&+ \frac{C}{\mu_1} \int_{\substack{-\eta <\re(z)
        <\eta \\ \mu_1\leq \im(z)\leq2 \mu_1}}  \big\lVert \OpW(\theta_2) (z-H_{\hbar,\mu})^{-1}\OpW(\theta_1) \big\rVert_{\mathrm{op}} \,L(dz),
        \end{aligned}
\end{equation}
where we have used that $\tilde{f}$ is compactly supported to ensure that we only integrate over a finite interval for $\re(z)$.  For the second term on the right-hand side, we will use the simple estimate
\begin{equation}\label{EQ:localisation_propagator_4}
	\big\lVert\OpW(\theta_2) (z-H_{\hbar,\mu})^{-1} \OpW(\theta_1) \big\rVert_{\mathrm{op}} \leq \frac{C}{|\im(z)|}.
\end{equation}
Applying this estimate we obtain for $n$ sufficiently large depending on $T_1$ and $M$ that
\begin{equation}\label{EQ:localisation_propagator_5}
	\begin{aligned}
	\MoveEqLeft  C_n \int_{\substack{-\eta <\re(z)<\eta \\ -2\mu_1\leq \im(z)\leq2\mu_1}} \abs{\im(z)}^n  e^{- t \hbar^{-1} \im(z)} \big\lVert\OpW(\theta_2) (z-H_{\hbar,\mu})^{-1} \OpW(\theta_1) \big\rVert_{\mathrm{op}} \,L(dz)
         \\
	\leq{}& \tilde{C}_n \int_{\substack{-\eta <\re(z)<\eta \\ -2\mu_1\leq \im(z)\leq2\mu_1}} \abs{\mu_1}^{n-1}  e^{2 t \hbar^{-1} \mu_1} \,L(dz) 
	\\
	\leq{}& C  \mu_1^{n}  e^{2 T_1 \hbar^{-1} \mu_1}
	\leq C \big(M\hbar\log(\tfrac{1}{\hbar})\big)^n \hbar^{-2T_1M} \leq C\hbar^{3+\gamma}.
        \end{aligned}
\end{equation}
To estimate the two remaining terms we cannot just use the simple estimate in \eqref{EQ:localisation_propagator_4}, as this will not give the desired result. Instead, we need to utilise that $\theta_1$ and $\theta_2$ have disjoint supports. This argument relies on pseudo-differential techniques. Hence we will first change our potential. Let $V_\varepsilon$ be the rough potential associated with $V$ as constructed in Lemma~\ref{LE:rough_potential_local} and let $H_{\hbar,\mu,\varepsilon}$ be the associated rough Sch\"odinger operator. We choose $\varepsilon=\hbar^{1-\delta}$, where 
\begin{equation*}
	\delta= \begin{cases}
	1-\frac{5+\gamma + \frac{1}{2}(\kappa-\gamma)}{5+\kappa} & \text{if $\gamma<1$}
	\\
	1-\frac{6+ \frac{1}{2}\kappa}{6+\kappa} & \text{if $\gamma=1$}.
	\end{cases} 
\end{equation*}
We then have that
\begin{equation*}
	\big\lVert\OpW(\theta_2)\big[ (z-H_{\hbar,\mu})^{-1} - (z-H_{\hbar,\mu,\varepsilon})^{-1} \big] \OpW(\theta_1) \big\rVert_{\mathrm{op}} \leq \frac{C \norm{V -V_\varepsilon}_{\mathrm{op}}}{|\im(z)|^2} \leq \frac{C\varepsilon^{k+\kappa}}{|\im(z)|^2}.
\end{equation*}
With our choice of $\varepsilon$ and $\delta$, this estimate implies that
\begin{equation}\label{EQ:localisation_propagator_6}
	\begin{aligned}
	\MoveEqLeft \big\lVert\OpW(\theta_2)(z-H_{\hbar,\mu})^{-1}\OpW(\theta_1) \big\rVert_{\mathrm{op}} 
	\\
	&\leq \big\lVert\OpW(\theta_2) (z-H_{\hbar,\mu,\varepsilon})^{-1} \OpW(\theta_1) \big\rVert_{\mathrm{op}}+ \frac{C\hbar^{5+\gamma+\frac{1}{2}(\kappa-\gamma')}}{|\im(z)|^2},
	\end{aligned}
\end{equation}
where $\gamma' = \gamma \boldsymbol{1}_{[0,1)}(\gamma)$. To estimate the operator norm on the right-hand side of \eqref{EQ:localisation_propagator_6} 
we will in the following let $\tilde{G}\in C_0^\infty(\R^{2d})$ such that $\tilde{G}\theta_1 = \theta_1$ and  
\begin{equation*}
	\dist\big\{\supp(\tilde{G}),\supp(\theta_2)\big\} \geq \tilde{c}>0.
\end{equation*}
For any $\nu>0$ we set $G=\nu \tilde{G}$. Moreover, we also define the rough pseudo-differential operators $\OpW(e^{G(x,p)\log(\hbar^{-1})})$. This operator will be rough in the sense of the monographs \cite{MR1735654,MR2952218}. It is defined by
\begin{equation*}
	\begin{aligned}
	\OpW(e^{G(x,p)\log(\hbar^{-1})}) \varphi (x) &= \frac{1}{(2\pi\hbar)^d} \int_{\R^{2d}} e^{i\hbar^{-1} \langle x-y,p\rangle } e^{G(\frac{x+y}{2},p)\log(\hbar^{-1})} \varphi(y) \,dydp,
	\end{aligned}
\end{equation*}
for all $\varphi\in \mathcal{S}(\R^d)$. It extends to a bounded operator on $L^2(\R^d)$ and it is elliptic and hence invertible if $\hbar\in(0,\hbar_0]$, where $\hbar_0>0$ is small enough. We will by $\OpW(e^{G(x,p)\log(\hbar^{-1})})^{-1}$ denote the inverse. The inverse will be an admissible operator in the sense that for every $N\in\N$ there exists $\N_0\in\N$, a sequence of symbols $\{a_j(x,p,\hbar)\}_{j=0}^{N_0}$ in $\mathcal{B}^\infty(\R^{2d})$ and a bounded operator $R_{N_0}(\hbar)$ such that 
\begin{equation*}
	\OpW(e^{G(x,p)\log(\hbar^{-1})})^{-1} = \sum_{j=0}^{N_0} \hbar^j \OpW(a_j) + \hbar^{N} R_{N_0}(\hbar),
\end{equation*}
and
\begin{equation*}
	\sup_{\hbar\in(0,1]} \norm{ R_{N_0}(\hbar)}_{\mathrm{op}} <\infty.
\end{equation*}
The principal symbol $a_0(x,p,\hbar)$  of $\OpW(e^{G(x,p)\log(\hbar^{-1})})^{-1}$ is given by the expression
\begin{equation*}
	a_0(x,p,\hbar) = e^{-G(x,p)\log(\hbar^{-1})}. 
\end{equation*}
For the remaining symbols $a_j(x,p,\hbar)$ of $\OpW(e^{G(x,p)\log(\hbar^{-1})})^{-1}$ we have that the subprincipal symbol $a_1(x,p,\hbar)=0$. For $j\geq2$ we have that
\begin{equation*}
	a_j(x,p,\hbar) = \tilde{a}_j(x,p,\hbar) e^{-G(x,p)\log(\hbar^{-1})},
\end{equation*}
where $\tilde{a}_j(x,p,\hbar)\in C_0^\infty(\R^{2d})$ such that $\supp(\tilde{a}_j)\cap\supp(\theta_i)=\emptyset$ for $i=1,2$. The symbols $\tilde{a}_j(x,p,\hbar)$ will be polynomials in $\partial_p^\alpha \partial_x^\beta \tilde{G}(x,p)$ and satisfy the bounds
\begin{equation*}
	\big|\partial_p^\alpha \partial_x^\beta a_j(x,p,\hbar) \big| \leq \log(\hbar^{-1})^{2j} C_{\alpha\beta},
\end{equation*}
for all $\alpha,\beta \in \N_0^d$. Hence applying Theorem~{4.23} from \cite{MR2952218} it follows that
\begin{equation}\label{EQ:localisation_propagator_6.5}
	\lVert \OpW(e^{G(x,p)\log(\hbar^{-1})})^{-1} \rVert_{\mathrm{op}} \leq C \qquad\text{and}\qquad \lVert \OpW(e^{G(x,p)\log(\hbar^{-1})}) \rVert_{\mathrm{op}} \leq C \hbar^{-\nu},
\end{equation}
where the constants $C$ depends on the dimension and the function $\tilde{G}$. Using the standard theory of rough pseudo-differential operators we obtain that
\begin{equation}\label{EQ:localisation_propagator_7}
	\begin{aligned}
	 \OpW(e^{G(x,p)\log(\hbar^{-1})})^{-1}(z-H_{\hbar,\mu,\varepsilon}) \OpW(e^{G(x,p)\log(\hbar^{-1})}) 
	=z-H_{\hbar,\mu,\varepsilon} + \nu \hbar \log(\hbar^{-1})  \mathcal{R}_1(\hbar),
	\end{aligned}
\end{equation}
where the error operator $\mathcal{R}_1(\hbar)$ satisfies that
\begin{equation}\label{EQ:localisation_propagator_7.5}
	\begin{aligned}
	\sup_{\hbar\in(0,\hbar_0]} \lVert \mathcal{R}_1(\hbar) \rVert_{\mathrm{op}} \leq C.
	\end{aligned}
\end{equation}
The constant $C$ depends on the dimension and the function $\tilde{G}$. This implies that for all $\hbar$ sufficiently small (depending on $\nu$) we have that the left-hand side of \eqref{EQ:localisation_propagator_7} is invertible. We get for all $\hbar>0$ sufficiently small that 
 \begin{equation}\label{EQ:localisation_propagator_8}
 	\begin{aligned}
	& e^{\nu  \log(\hbar^{-1})} \big\lVert \OpW(\theta_2) (z-H_{\hbar,\mu,\varepsilon})^{-1} \OpW(\theta_1)\big\rVert_{\mathrm{op}}
	\\
	&\leq   \big\lVert \OpW(\theta_2) \OpW(e^{G(x,p)\log(\hbar^{-1})})^{-1} (z-H_{\hbar,\mu,\varepsilon})^{-1} \OpW(e^{G(x,p)\log(\hbar^{-1})})   \OpW(\theta_1)\big\rVert_{\mathrm{op}} +   \frac{C_{\tilde{N}} \hbar^{\tilde{N}} }{|\im(z)|}
	\\
	&=   \big\lVert \OpW(\theta_2) \big[z- \OpW(e^{G(x,p)\log(\hbar^{-1})})^{-1} H_{\hbar,\mu,\varepsilon} \OpW(e^{G(x,p)\log(\hbar^{-1})}) \big]^{-1}  \OpW(\theta_1)\big\rVert_{\mathrm{op}} +   \frac{C_{\tilde{N}} \hbar^{\tilde{N}} }{|\im(z)|}
	\\
	&\leq \frac{ C+ C_{\tilde{N}} \hbar^{\tilde{N}} }{|\im(z)|},
	\end{aligned}
 \end{equation} 
 where the number $\tilde{N}$ is chosen sufficiently large. We will be choosing the parameter $\nu$ to be
  \begin{equation*}
  	\nu = \min\Big( 8 , \frac{|\im(z)|}{C_1 \hbar \log(\hbar^{-1})} \Big),
  \end{equation*}
where $C_1$ is a positive constant chosen sufficiently large to ensure the inevitability of the operator on the left-hand side of \eqref{EQ:localisation_propagator_7}. Note that $\nu$ will be of order one for all  $|\im(z)|\in[\mu_1,2\mu_1]$. With this choice for $\nu$ we get from \eqref{EQ:localisation_propagator_8} that
\begin{equation}\label{EQ:localisation_propagator_9}
	\begin{aligned}
	\big\lVert \OpW(\theta_2) (z-H_{\hbar,\mu,\varepsilon})^{-1} \OpW(\theta_1)\big\rVert_{\mathrm{op}} \leq{}& \frac{C}{|\im(z)|} \max\big( \hbar^{8}, e^{- \frac{|\im(z)|}{C_1 \hbar}} \big).
	 \end{aligned}
\end{equation}
Combining the estimates in \eqref{EQ:localisation_propagator_6} and \eqref{EQ:localisation_propagator_9} we obtain that
\begin{equation}\label{EQ:localisation_propagator_10}
	\big\lVert\OpW(\theta_2)(z-H_{\hbar,\mu})^{-1}\OpW(\theta_1) \big\rVert_{\mathrm{op}} \leq   \frac{C \max\big( \hbar^{8}, e^{- \frac{|\im(z)|}{C_1 \hbar}} \big)}{|\im(z)|}  + \frac{C\hbar^{5+\gamma+\frac{1}{2}(\kappa-\gamma')}}{|\im(z)|^2}.
\end{equation}
Using the estimate obtained in \eqref{EQ:localisation_propagator_10} we get that
\begin{equation}\label{EQ:localisation_propagator_11}
	\begin{aligned}
	\MoveEqLeft \frac{1}{\mu_1} \int_{\substack{-\eta <\re(z)
        <\eta \\ -2\mu_1\leq \im(z)\leq-\mu_1}} e^{- t \hbar^{-1} \im(z)} \big\lVert \OpW(\theta_2)(z-H_{\hbar,\mu})^{-1} \OpW(\theta_1) \big\rVert_{\mathrm{op}} \,L(dz)
       \\
       \leq{}&  \frac{C }{\mu_1^2}  \int_{\substack{-\eta <\re(z)
        <\eta \\ -2\mu_1\leq \im(z)\leq-\mu_1}} e^{2 T_1 \hbar^{-1} \mu_1} \big[  \max\big( \hbar^{8}, e^{- \frac{|\im(z)|}{C_1 \hbar}} \big)  + \hbar^{5+\gamma+\frac{1}{2}(\kappa-\gamma')} \mu_1^{-1} \big]\,L(dz)
        \\
       \leq{}&  \frac{C}{\mu_1}  e^{2 t \hbar^{-1} \mu_1}  \big( \hbar^{8}+e^{- \frac{\mu_1}{C_1 \hbar}} +  \hbar^{5+\gamma+\frac{1}{2}(\kappa-\gamma')} \mu_1^{-1} \big)  
       \\
       \leq{}&    \frac{C }{M}   \big( \hbar^{7 -2 T_1 M}+\hbar^{ M( \frac{1}{C_1} -2T_1) -1} + \hbar^{3+\gamma +\frac{1}{2}(\kappa-\gamma') - 2T_1 M} \big),      
        \end{aligned}
\end{equation}
where we in the above calculation have used that $\mu_1=M \hbar \log(\hbar^{-1})$. Recalling that $C_1$ is already a fixed constant we choose $M=C_1(4+\gamma +\frac{1}{2}(\kappa-\gamma'))$ and $T_1 = \frac{1}{4}(\kappa-\gamma') M^{-1}$. We then obtain that $2T_1 M = \frac{1}{2} (\kappa-\gamma') \leq \frac{1}{2}$. With these choices of constants we obtain from \eqref{EQ:localisation_propagator_11} that
\begin{equation}\label{EQ:localisation_propagator_12}
	\begin{aligned}
	\frac{1}{\mu_1} \int_{\substack{-\eta <\re(z)
        <\eta \\ -2\mu_1\leq \im(z)\leq-\mu_1}} e^{- t \hbar^{-1} \im(z)} \big\lVert \OpW(\theta_2)(z-H_{\hbar,\mu})^{-1} \OpW(\theta_1) \big\rVert_{\mathrm{op}} \,L(dz)
       \leq  C\hbar^{3+\gamma}.   
        \end{aligned}
\end{equation}
Analogously we get that
\begin{equation}\label{EQ:localisation_propagator_13}
	\begin{aligned}
	\MoveEqLeft \frac{1}{\mu_1} \int_{\substack{-\eta <\re(z)
        <\eta \\ \mu_1\leq \im(z)\leq2 \mu_1}}  \big\lVert \OpW(\theta_2) (z-H_{\hbar,\mu})^{-1}\OpW(\theta_1) \big\rVert_{\mathrm{op}} \,L(dz) 
        \leq  C\hbar^{3+\gamma}.  
        \end{aligned}
\end{equation}
Combining the estimates obtained in \eqref{EQ:localisation_propagator_3}, \eqref{EQ:localisation_propagator_5}, \eqref{EQ:localisation_propagator_12} and \eqref{EQ:localisation_propagator_13} we obtain that
\begin{equation*}
	\norm{\OpW(\theta_2) e^{it\hbar^{-1} H_{\hbar,\mu} } f(H_{\hbar,\mu}) \OpW(\theta_1)}_{\mathrm{op}} \leq C \hbar^{3+\gamma}.
\end{equation*}
This is the desired estimate for the case $0<t\leq T_1$. The case $-T_1\leq t<0$ is proven analogously. This concludes the proof.
\end{proof}

\begin{lemma}\label{LE:Func_moll_com_model_reg}
Let $\gamma\in[0,1]$ and $\mathcal{H}_{\hbar,\mu}$ be an operator acting in $L^2(\R^d)$. Suppose  $\mathcal{H}_{\hbar,\mu}$ and $\gamma$ satisfies Assumption~\ref{Assumption:local_potential_1} with the open set $\Omega$ and let   $H_{\hbar,\mu,\varepsilon} = (-i\hbar\nabla-\mu a)^2 +V_\varepsilon $ be the associated rough Schr\"odinger operator.  Assume that $\mu\leq\mu_0<1$ and $\hbar\in(0,\hbar_0]$, with $\hbar_0$ sufficiently small.
Moreover, let $\chi_\hbar(t)$ be the function from Remark~\ref{RE:mollyfier_def}, $f\in C_0^\infty(\R)$ and $\varphi\in C_0^\infty(\Omega)$ then it holds for $s\in\R$ that
\begin{equation} \label{EQLE:Func_moll_com_model_reg}
	\begin{aligned}
	\MoveEqLeft \norm{\varphi f(\mathcal{H}_{\hbar,\mu}) \chi_\hbar(\mathcal{H}_{\hbar,\mu}-s)f(\mathcal{H}_{\hbar,\mu})  \varphi -\varphi f(H_{\hbar,\mu,\varepsilon}) \chi_\hbar(H_{\hbar,\mu,\varepsilon}-s)f(H_{\hbar,\mu,\varepsilon})  \varphi }_1 
	\\
	&\leq C \varepsilon^{5+\kappa} \hbar^{-d-2} + C' \hbar^{1+\gamma-d}.
	\end{aligned}
\end{equation}
Moreover, suppose there exists some $c>0$ such that 
\begin{equation*}
	|V(x)| +\hbar^{\frac{2}{3}} \geq c \qquad\text{for all $x\in \Omega$}.
\end{equation*}
Then it holds that
\begin{equation}\label{LEEQ:Func_moll_com_model_reg}
	\norm{\varphi f(\mathcal{H}_{\hbar,\mu}) \chi_\hbar(\mathcal{H}_{\hbar,\mu}-s)f(\mathcal{H}_{\hbar,\mu})  \varphi }_1  \leq C \hbar^{-d}.
\end{equation}
The constants $C$ and $C'$ depend on $\supp(f)$, the dimension and the numbers $\norm{f}_{L^\infty(\R)}$, $\norm{\partial^\alpha\varphi}_{L^\infty(\R)}$ for all $\alpha\in\N^d_0$, $\norm{\partial^\alpha a_j}_{L^\infty(\R^d)}$ for all $\alpha\in\N_0^d$ and $j\in\{1\dots,d\}$ and $\norm{\partial_x^\alpha V}_{L^\infty(\R^d)}$ for all $\alpha\in\N_0^d$ such that $|\alpha|\leq 5$ and the H\"older constant for $V$.
\end{lemma}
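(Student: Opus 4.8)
The overall strategy is the natural mollified analogue of the proof of Lemma~\ref{LE:Func_com_model_reg}: first pass from $\mathcal{H}_{\hbar,\mu}$ to the full‑regularity operator $H_{\hbar,\mu}$ at the cost of $O(\hbar^N)$, then from $H_{\hbar,\mu}$ to the rough operator $H_{\hbar,\mu,\varepsilon}$ at the cost of $\varepsilon^{2+\kappa}\hbar^{-d}$, the extra factor $\hbar^{-1}$ in the final bound coming from the mollifier $\chi_\hbar$. I would establish \eqref{EQLE:Func_moll_com_model_reg} first and deduce \eqref{LEEQ:Func_moll_com_model_reg} from it. For the deduction, under the non‑critical hypothesis $\abs{V(x)}+\hbar^{2/3}\geq c$ on $\Omega$ Lemma~\ref{LE:rough_potential_local} gives $\abs{V_\varepsilon(x)}+\hbar^{2/3}\geq\tilde c$ on $\Omega$ for $\varepsilon$ small, so Lemma~\ref{LE:assump_est_func_loc} applies to the associated rough Schr\"odinger operator and yields $\norm{\varphi f(H_{\hbar,\mu,\varepsilon})\chi_\hbar(H_{\hbar,\mu,\varepsilon}-s)f(H_{\hbar,\mu,\varepsilon})\varphi}_1\leq C\hbar^{-d}$; combining this with \eqref{EQLE:Func_moll_com_model_reg} and choosing $\varepsilon$ so small that $\varepsilon^{2+\kappa}\leq\hbar$ — compatible with the standing requirement $\varepsilon\geq\hbar^{1-\delta}$, e.g. $\varepsilon=\hbar^{1/2}$ — gives \eqref{LEEQ:Func_moll_com_model_reg}.

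For \eqref{EQLE:Func_moll_com_model_reg} the starting observation is that, all three factors being functions of one and the same self‑adjoint operator, $f(A)\chi_\hbar(A-s)f(A)=\Phi_{\hbar,s}(A)$ with $\Phi_{\hbar,s}(t)=f(t)^2\chi_\hbar(t-s)\in C_0^\infty(\R)$, both for $A=\mathcal{H}_{\hbar,\mu}$ and for $A=H_{\hbar,\mu,\varepsilon}$; thus it suffices to bound $\norm{\varphi[\Phi_{\hbar,s}(\mathcal{H}_{\hbar,\mu})-\Phi_{\hbar,s}(H_{\hbar,\mu,\varepsilon})]\varphi}_1$. Inserting the full‑regularity operator $H_{\hbar,\mu}=(-i\hbar\nabla-\mu a)^2+V$ of Assumption~\ref{Assumption:local_potential_1}, this difference splits as $\varphi[\Phi_{\hbar,s}(\mathcal{H}_{\hbar,\mu})-\Phi_{\hbar,s}(H_{\hbar,\mu})]\varphi+\varphi[\Phi_{\hbar,s}(H_{\hbar,\mu})-\Phi_{\hbar,s}(H_{\hbar,\mu,\varepsilon})]\varphi$. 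For the first term I would expand the mollifier by the inverse semiclassical Fourier transform, $\chi_\hbar(A-s)=\tfrac{1}{2\pi\hbar}\int_\R\hat\chi(\tau)e^{-is\tau/\hbar}e^{i\tau A/\hbar}\,d\tau$, and use that $\mathcal{H}_{\hbar,\mu}$ and $H_{\hbar,\mu}$ coincide on $\Omega$, which contains a neighbourhood of $\supp\varphi$: combining the local propagator construction of Remark~\ref{RE:propagator}, valid on the fixed interval $\supp\hat\chi\subset(-T_0,T_0)$, with a cut‑off argument as in the proof of Lemma~\ref{LE:Comparision_Loc_infty} (replacing $f(\mathcal{H}_{\hbar,\mu})$ by $f(H_{\hbar,\mu})$ modulo $O(\hbar^N)$ via Lemma~\ref{LE:Comparision_Loc_infty} and disposing of the remaining pieces with the disjoint‑support bound of Lemma~\ref{LE:Func_cal_est_inf_pon}), this first term is $O(\hbar^N)$ in trace norm for every $N$, the factor $\hbar^{-1}$ from the Fourier integral being harmless.

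For the second term both operators are genuine Schr\"odinger operators with $\norm{V-V_\varepsilon}_{L^\infty}\leq C\varepsilon^{2+\kappa}$ by Lemma~\ref{LE:rough_potential_local}, and I would telescope $\Phi_{\hbar,s}(H_{\hbar,\mu})-\Phi_{\hbar,s}(H_{\hbar,\mu,\varepsilon})$ into $[f(H_{\hbar,\mu})-f(H_{\hbar,\mu,\varepsilon})]\chi_\hbar(H_{\hbar,\mu}-s)f(H_{\hbar,\mu})$, $f(H_{\hbar,\mu,\varepsilon})[\chi_\hbar(H_{\hbar,\mu}-s)-\chi_\hbar(H_{\hbar,\mu,\varepsilon}-s)]f(H_{\hbar,\mu})$ and $f(H_{\hbar,\mu,\varepsilon})\chi_\hbar(H_{\hbar,\mu,\varepsilon}-s)[f(H_{\hbar,\mu})-f(H_{\hbar,\mu,\varepsilon})]$. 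The outer two pieces are handled by $\norm{\chi_\hbar(\cdot-s)}_{\mathrm{op}}\leq\hbar^{-1}\norm{\chi_1}_{L^\infty}$ together with the estimate $\norm{\varphi[f(H_{\hbar,\mu})-f(H_{\hbar,\mu,\varepsilon})]}_1\leq C\varepsilon^{2+\kappa}\hbar^{-d}$ already established inside the proof of Lemma~\ref{LE:Func_com_model_reg} (take adjoints for the last piece), which produces the asserted $\varepsilon^{2+\kappa}\hbar^{-d-1}$. The middle piece is the crux: I would apply the Helffer--Sj\"ostrand formula (Theorem~\ref{THM:Helffer-Sjostrand}) to $\chi_\hbar(\cdot-s)$ with an almost analytic extension concentrated in $\abs{z-s}\lesssim\hbar$, so that $\abs{\bar\partial\widetilde{\chi_\hbar(\cdot-s)}(z)}\leq C_M\hbar^{-2-M}\abs{\im z}^M$ there, use the resolvent identity $(z-H_{\hbar,\mu})^{-1}-(z-H_{\hbar,\mu,\varepsilon})^{-1}=(z-H_{\hbar,\mu})^{-1}(V_\varepsilon-V)(z-H_{\hbar,\mu,\varepsilon})^{-1}$, and estimate
\[
\norm{\varphi f(H_{\hbar,\mu,\varepsilon})(z-H_{\hbar,\mu})^{-1}(V_\varepsilon-V)(z-H_{\hbar,\mu,\varepsilon})^{-1}f(H_{\hbar,\mu})\varphi}_1\leq C\,\varepsilon^{2+\kappa}\hbar^{-d}\abs{\im z}^{-1}
\]
by writing the trace norm as a product of the Hilbert--Schmidt norms $\norm{\varphi f(H_{\hbar,\mu,\varepsilon})(z-H_{\hbar,\mu})^{-1}}_{\HS}$ and $\norm{(z-H_{\hbar,\mu,\varepsilon})^{-1}f(H_{\hbar,\mu})\varphi}_{\HS}$, each of which is $O(\hbar^{-d/2}\abs{\im z}^{-1/2})$ because the spectral cut‑off $f$ confines the momentum variable to a bounded set on which $\int\abs{z-\abs{p}^2-V}^{-2}\,dp\lesssim\abs{\im z}^{-1}$. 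After the rescaling $w=(z-s)/\hbar$ the $z$‑integral then contributes exactly one extra power $\hbar^{-1}$, giving the bound $C\varepsilon^{2+\kappa}\hbar^{-d-1}$.

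The main obstacle is precisely this last step, i.e. getting the power of $\hbar$ right so that the mollifier costs only $\hbar^{-1}$ and not $\hbar^{-2}$: bounding $\norm{V-V_\varepsilon}_{L^\infty}$ and the two resolvents separately would lose $\abs{\im z}^{-2}$, hence $\hbar^{-2}$, and the improvement rests on the Hilbert--Schmidt splitting above, which uses that the extra momentum integration smears the resolvent singularity. A secondary technical point is the reduction from $\mathcal{H}_{\hbar,\mu}$ to $H_{\hbar,\mu}$ for the mollified quantity, which needs the local propagator approximation on the whole of $\supp\hat\chi$ rather than only for short times — this is exactly what Remark~\ref{RE:propagator} supplies.
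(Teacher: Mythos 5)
Your overall skeleton coincides with the paper's: split the difference through the exact Schr\"odinger operator $H_{\hbar,\mu}$, handle the outer terms of the telescoping with $\sup_t\chi_\hbar(t)\leq C\hbar^{-1}$ together with the trace-norm comparison from (the proof of) Lemma~\ref{LE:Func_com_model_reg}, and deduce \eqref{LEEQ:Func_moll_com_model_reg} from \eqref{EQLE:Func_moll_com_model_reg} plus Lemma~\ref{LE:assump_est_func_loc} (your explicit remark that this needs $\varepsilon^{2+\kappa}\lesssim\hbar$ is a fair point the paper leaves implicit). However, the two steps you yourself identify as the crux are exactly where your argument does not close, and in both places the paper uses a single device you miss: since $\hat\chi$ has compact support, $z\mapsto\chi_\hbar(z-s)=\mathcal{F}_\hbar^{-1}[\hat\chi](z-s)$ is \emph{entire}, so it can be carried inside the Helffer--Sj\"ostrand representation of $f$ as a holomorphic weight,
\begin{equation*}
\varphi f(A)\chi_\hbar(A-s)=-\frac{1}{\pi}\int_\C\bar\partial\tilde f(z)\,\chi_\hbar(z-s)\,\varphi(z-A)^{-1}\,L(dz),
\end{equation*}
reducing every comparison to a resolvent difference at the cost of one factor $\abs{\chi_\hbar(z-s)}\lesssim\hbar^{-1}$.

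The first gap is the $\mathcal{H}_{\hbar,\mu}$ versus $H_{\hbar,\mu}$ piece. Remark~\ref{RE:propagator} supplies a parametrix for $e^{i\hbar^{-1}tH_{\hbar,\mu,\varepsilon}}$, i.e.\ for the \emph{rough Schr\"odinger operator}, and its extension to $[-T_0,T_0]$ requires a non-critical condition, which is not assumed in the first assertion \eqref{EQLE:Func_moll_com_model_reg}. More seriously, nothing in the paper controls $e^{i\hbar^{-1}t\mathcal{H}_{\hbar,\mu}}$: the operator $\mathcal{H}_{\hbar,\mu}$ is known only abstractly, and comparing its propagator with that of $H_{\hbar,\mu}$ near $\supp\varphi$ for times of order one would require a Duhamel/finite-propagation argument you do not supply. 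The paper instead writes this difference via the weighted Helffer--Sj\"ostrand formula above and reuses the resolvent estimate from the proof of Lemma~\ref{LE:Comparision_Loc_infty}, obtaining $O(\hbar^N)$ directly. The second gap is the middle term of your telescoping: the claimed bound $\norm{\varphi f(H_{\hbar,\mu,\varepsilon})(z-H_{\hbar,\mu})^{-1}}_{\HS}\lesssim\hbar^{-d/2}\abs{\im z}^{-1/2}$ is a genuine local spectral-density estimate of essentially the same strength as Lemma~\ref{LE:assump_est_func_loc} itself; the justification via $\int\abs{z-p^2-V}^{-2}\,dp\lesssim\abs{\im z}^{-1}$ substitutes the symbol for the operator without argument and would need its own proof. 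The paper avoids this entirely: the $H_{\hbar,\mu}$ versus $H_{\hbar,\mu,\varepsilon}$ comparison is only needed in \emph{operator} norm, since the trace-class factor $\norm{f(H_{\hbar,\mu})\varphi}_1\leq C\hbar^{-d}$ is peeled off first; then the crude resolvent-identity bound $\norm{V-V_\varepsilon}_{L^\infty}\abs{\im z}^{-2}\leq C\varepsilon^{2+\kappa}\abs{\im z}^{-2}$, integrated against $\bar\partial\tilde f(z)\chi_\hbar(z-s)$, already yields $C\varepsilon^{2+\kappa}\hbar^{-1}$, hence $C\varepsilon^{2+\kappa}\hbar^{-d-1}$ in total. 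So your ``main obstacle'' is correctly diagnosed, but your resolution of it is the unproven step, whereas the paper sidesteps it.
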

\begin{proof}
Let $H_{\hbar,\mu}$ be the magenetic Schr\"odinger operator associated to $\mathcal{H}_{\hbar,\mu}$. We then have that
\begin{equation}\label{EQ:Func_moll_com_model_reg_0}
	\begin{aligned}
	\MoveEqLeft \norm{\varphi f(\mathcal{H}_{\hbar,\mu}) \chi_\hbar(\mathcal{H}_{\hbar,\mu}-s)f(\mathcal{H}_{\hbar,\mu})  \varphi -\varphi f(H_{\hbar,\mu,\varepsilon}) \chi_\hbar(H_{\hbar,\mu,\varepsilon}-s)f(H_{\hbar,\mu,\varepsilon})  \varphi }_1 
	\\
	\leq{}& \norm{\varphi f(\mathcal{H}_{\hbar,\mu}) \chi_\hbar(\mathcal{H}_{\hbar,\mu}-s)f(\mathcal{H}_{\hbar,\mu})  \varphi  -\varphi f(H_{\hbar,\mu}) \chi_\hbar(H_{\hbar,\mu}-s)f(H_{\hbar,\mu})  \varphi }_1 
	\\
	&+\norm{\varphi f(H_{\hbar,\mu}) \chi_\hbar(H_{\hbar,\mu}-s)f(H_{\hbar,\mu})  \varphi   - \varphi f(H_{\hbar,\mu,\varepsilon}) \chi_\hbar(H_{\hbar,\mu,\varepsilon}-s)f(H_{\hbar,\mu,\varepsilon})  \varphi }_1.
	\end{aligned}
\end{equation}
We start by estimating the first term on the righthand side of \eqref{EQ:Func_moll_com_model_reg_0}. By applying Lemma~\ref{LE:Comparision_Loc_infty}  we get that
\begin{equation}\label{EQ:Func_moll_com_model_reg_1}
	\begin{aligned}
	\MoveEqLeft\norm{\varphi f(\mathcal{H}_{\hbar,\mu}) \chi_\hbar(\mathcal{H}_{\hbar,\mu}-s)f(\mathcal{H}_{\hbar,\mu})  \varphi  -\varphi f(H_{\hbar,\mu}) \chi_\hbar(H_{\hbar,\mu}-s)f(H_{\hbar,\mu})  \varphi }_1 
	\\
	\leq{}& \norm{ \varphi f(\mathcal{H}_{\hbar,\mu}) \chi_\hbar(\mathcal{H}_{\hbar,\mu}-s) f(H_{\hbar,\mu})  \varphi  - \varphi f(\mathcal{H}_{\hbar,\mu}) \chi_\hbar(H_{\hbar,\mu}-s) f(H_{\hbar,\mu})\varphi }_{1}
	\\
	&+ C\hbar^{-1} \norm{\varphi f(\mathcal{H}_{\hbar,\mu})  - \varphi f(H_{\hbar,\mu})  }_1 + C\hbar^{-1} \norm{f(\mathcal{H}_{\hbar,\mu}) \varphi  - f(H_{\hbar,\mu}) \varphi   }_1
	\\
	\leq{}& \norm{\varphi f(\mathcal{H}_{\hbar,\mu}) \chi_\hbar(\mathcal{H}_{\hbar,\mu}-s) f(H_{\hbar,\mu})  \varphi  - \varphi f(\mathcal{H}_{\hbar,\mu})\chi_\hbar(H_{\hbar,\mu}-s) f(H_{\hbar,\mu})\varphi }_{1} + C \hbar^N,
	\end{aligned}
\end{equation}
where we in the first inequality have added and subtracted the two terms $\varphi f(\mathcal{H}_{\hbar,\mu}) \chi_\hbar(H_{\hbar,\mu}-s)f(H_{\hbar,\mu})  \varphi $ and $\varphi f(\mathcal{H}_{\hbar,\mu})\chi_\hbar(\mathcal{H}_{\hbar,\mu}-s) f(H_{\hbar,\mu})  \varphi $, used the triangle inequality, Lemma~\ref{LE:Comparision_Loc_infty} and that $\sup_{t\in\R}\chi_\hbar(t) \leq C\hbar^{-1}$. In the second inequality, we have used Lemma~\ref{LE:Func_com_model_reg}.  
We observe that, using how we defined the function $\chi_\hbar(z-s)$, we have that
\begin{equation*}
	\chi_\hbar(z-s) = \mathcal{F}_\hbar^{-1}[\chi] (z-s) = \frac{1}{2\pi \hbar} \int_\R e^{i\hbar^{-1}t(z-s)} \chi(t) \,dt.
\end{equation*}
Using this expression and the fundamental theorem of calculus we get that
\begin{equation}\label{EQ:Func_moll_com_model_reg_2}
	\begin{aligned}
	\MoveEqLeft \chi_\hbar(\mathcal{H}_{\hbar,\mu}-s) - \chi_\hbar(H_{\hbar,\mu}-s) 
	\\
	&= \frac{1}{2\pi \hbar} \int_\R \big(e^{i\hbar^{-1}t(\mathcal{H}_{\hbar,\mu}-s)} -e^{i\hbar^{-1}t(H_{\hbar,\mu}-s)}\big) \chi(t) \,dt
	\\
	&= \frac{i}{2\pi \hbar^2} \int_\R  e^{-i\hbar^{-1}t s} \chi(t)  \int_{0}^t e^{i\hbar^{-1}\tau\mathcal{H}_{\hbar,\mu}}\big( \mathcal{H}_{\hbar,\mu}- H_{\hbar,\mu} \big)e^{i\hbar^{-1}(t-\tau)H_{\hbar,\mu}} \,d\tau dt.
	\end{aligned}
\end{equation}
Letting $\tilde{f}\in C_0^\infty(\R)$ such that $\tilde{f}(t)f(t) = f(t)$ for all $t\in\R$ and using the identity obtained in \eqref{EQ:Func_moll_com_model_reg_2} we get that
\begin{equation}\label{EQ:Func_moll_com_model_reg_3}
	\begin{aligned}
	\MoveEqLeft \norm{\varphi f(\mathcal{H}_{\hbar,\mu}) \chi_\hbar(\mathcal{H}_{\hbar,\mu}-s) f(H_{\hbar,\mu})  \varphi  - \varphi f(\mathcal{H}_{\hbar,\mu})\chi_\hbar(H_{\hbar,\mu}-s) f(H_{\hbar,\mu})\varphi }_{1}
	\\
	\leq{}& \frac{1}{2\pi \hbar^2} \int_\R \chi(t)  \int_{0}^t  \big\lVert \varphi f(\mathcal{H}_{\hbar,\mu}) e^{i\hbar^{-1}t\mathcal{H}_{\hbar,\mu}}\tilde{f_1}(\mathcal{H}_{\hbar,\mu})\big( \mathcal{H}_{\hbar,\mu}- H_{\hbar,\mu} \big)
	\\
	&\phantom{ \frac{1}{2\pi \hbar^2} \int_\R \chi(t)  \int_{0}^t  \big\lVert \varphi}{}
	 \times \tilde{f}(H_{\hbar,\mu}) e^{i\hbar^{-1}(t-\tau)H_{\hbar,\mu}} f(H_{\hbar,\mu})\varphi \big\rVert_1 \,d\tau dt.
	\end{aligned}
\end{equation}
As in the proof of Lemma~\ref{LE:assump_est_func_loc} we let $\theta\in C_0^\infty(\Omega\times B(0,K+1))$ such that
\begin{equation*}
	\supp(\varphi)\cap \supp(1-\theta) \cap \supp(f(a_{\varepsilon,0}^f) )=\emptyset,
\end{equation*}
where $a_{\varepsilon,0}^f(x,p) = f( (p- \mu a(x))^2  + V_\varepsilon(x) )$.  Then by arguing as in the proof of Lemma~\ref{LE:assump_est_func_loc} and combining it with the estimate obtained in \eqref{EQ:Func_com_model_reg_4} we get that
\begin{equation}\label{EQ:Func_moll_com_model_reg_4}
	\begin{aligned}
	\MoveEqLeft \big\lVert \varphi f(\mathcal{H}_{\hbar,\mu}) e^{i\hbar^{-1}t\mathcal{H}_{\hbar,\mu}}\tilde{f}(\mathcal{H}_{\hbar,\mu})\big( \mathcal{H}_{\hbar,\mu}- H_{\hbar,\mu} \big) \tilde{f}(H_{\hbar,\mu}) e^{i\hbar^{-1}(t-\tau)H_{\hbar,\mu}} f(H_{\hbar,\mu})\varphi \big\rVert_1
	\\
	\leq{}& C\hbar^{-d}  \big\lVert \tilde{f}(\mathcal{H}_{\hbar,\mu})\big( \mathcal{H}_{\hbar,\mu}- H_{\hbar,\mu} \big) \tilde{f}(H_{\hbar,\mu}) e^{i\hbar^{-1}(t-\tau)H_{\hbar,\mu}} f(H_{\hbar,\mu}) \OpW(\theta) \varphi \big\rVert_{\mathrm{op}} + C \varepsilon^{5+\kappa} \hbar^{-d}.
	\end{aligned}
\end{equation}
Let $\tilde{\theta}\in C_0^\infty(\Omega\times B(0,K+1))$ such that 
\begin{equation*}
	\dist \big( \supp(\theta),  \supp(1-\tilde{\theta}) \big)\geq c,
\end{equation*}
where $c>0$ is some positive constant. With this function, we have that
\begin{equation}\label{EQ:Func_moll_com_model_reg_5}
	\begin{aligned}
	\MoveEqLeft  \big\lVert \tilde{f}(\mathcal{H}_{\hbar,\mu})\big( \mathcal{H}_{\hbar,\mu}- H_{\hbar,\mu} \big) \tilde{f}(H_{\hbar,\mu}) e^{i\hbar^{-1}(t-\tau)H_{\hbar,\mu}} f(H_{\hbar,\mu}) \OpW(\theta) \varphi \big\rVert_{\mathrm{op}} 
	\\
	\leq{}& C \big\lVert \tilde{f}(\mathcal{H}_{\hbar,\mu})\big( \mathcal{H}_{\hbar,\mu}- H_{\hbar,\mu} \big) \tilde{f}(H_{\hbar,\mu}) \OpW(\tilde{\theta})  \big\rVert_{\mathrm{op}} 
	\\
	&+ C\big\lVert \OpW(1-\tilde{\theta}) e^{i\hbar^{-1}(t-\tau)H_{\hbar,\mu}} f(H_{\hbar,\mu}) \OpW(\theta) \varphi \big\rVert_{\mathrm{op}}
	\\
	\leq{}& C \hbar^{3+\gamma},  
	\end{aligned}
\end{equation}
where we in the last inequality have used Lemma~\ref{LE:Func_com_model_reg} and Lemma~\ref{LE:localisation_propagator}. We have here also used that $t-\tau$ is sufficiently small due to the support properties of $\chi$. From combining the estimates in \eqref{EQ:Func_moll_com_model_reg_1}, \eqref{EQ:Func_moll_com_model_reg_3}, \eqref{EQ:Func_moll_com_model_reg_4} and \eqref{EQ:Func_moll_com_model_reg_5} we obtain that
\begin{equation}\label{EQ:Func_moll_com_model_reg_6}
	\begin{aligned}
	\MoveEqLeft \norm{\varphi f(\mathcal{H}_{\hbar,\mu}) \chi_\hbar(\mathcal{H}_{\hbar,\mu}-s)f(\mathcal{H}_{\hbar,\mu})  \varphi  -\varphi f(H_{\hbar,\mu}) \chi_\hbar(H_{\hbar,\mu}-s)f(H_{\hbar,\mu})  \varphi }_1 
	\\
	&\leq C\big( \hbar^{1+\gamma-d} +  \varepsilon^{5+\kappa} \hbar^{-2-d}\big).
	\end{aligned}
\end{equation}
For the second term on the righthand side of \eqref{EQ:Func_moll_com_model_reg_0} we get by a triangle inequality and the estimate obtained in \eqref{EQ:Func_com_model_reg_4} that 
\begin{equation}\label{EQ:Func_moll_com_model_reg_7}
	\begin{aligned}
	\MoveEqLeft \norm{\varphi f(H_{\hbar,\mu}) \chi_\hbar(H_{\hbar,\mu}-s)f(H_{\hbar,\mu})  \varphi   - \varphi f(H_{\hbar,\mu,\varepsilon}) \chi_\hbar(H_{\hbar,\mu,\varepsilon}-s)f(H_{\hbar,\mu,\varepsilon})  \varphi }_1
	\\
	&\leq \big\lVert \varphi f(H_{\hbar,\mu}) \big[ \chi_\hbar(H_{\hbar,\mu}-s)  - \chi_\hbar(H_{\hbar,\mu,\varepsilon}-s) \big]f(H_{\hbar,\mu})  \varphi \big\rVert_1+  C \varepsilon^{5+\kappa} \hbar^{-1-d}
	\\
	&\leq C\hbar^{-d} \big\lVert   \chi_\hbar(H_{\hbar,\mu}-s)  - \chi_\hbar(H_{\hbar,\mu,\varepsilon}-s) \big\rVert_{\mathrm{op}}+  C \varepsilon^{5+\kappa} \hbar^{-1-d},
	\end{aligned}
\end{equation}
where we in the last inequality have used Lemma~\ref{LE:Func_cal_est_inf_pon}. Using the same fundamental theorem of calculus as in \eqref{EQ:Func_moll_com_model_reg_2} we get that
\begin{equation}\label{EQ:Func_moll_com_model_reg_8}
	\begin{aligned}
	\MoveEqLeft \big\lVert   \chi_\hbar(H_{\hbar,\mu}-s)  - \chi_\hbar(H_{\hbar,\mu,\varepsilon}-s) \big\rVert_{\mathrm{op}}
	\\
	&\leq  \frac{1}{2\pi \hbar^2} \int_\R   \chi(t)  \int_{0}^t \big\lVert H_{\hbar,\mu} - H_{\hbar,\mu,\varepsilon}  \big\rVert_{\mathrm{op}} \,d\tau dt
	\leq C \varepsilon^{5+\kappa} \hbar^{-2}.
	\end{aligned}
\end{equation}
From combining the estimates in \eqref{EQ:Func_moll_com_model_reg_7} and \eqref{EQ:Func_moll_com_model_reg_8} we obtain that
\begin{equation}\label{EQ:Func_moll_com_model_reg_9}
	\begin{aligned}
	\norm{\varphi f(H_{\hbar,\mu}) \chi_\hbar(H_{\hbar,\mu}-s)f(H_{\hbar,\mu})  \varphi   - \varphi f(H_{\hbar,\mu,\varepsilon}) \chi_\hbar(H_{\hbar,\mu,\varepsilon}-s)f(H_{\hbar,\mu,\varepsilon})  \varphi }_1
	\leq C \varepsilon^{5+\kappa} \hbar^{-2-d}.
	\end{aligned}
\end{equation}
Finally by combining the estimates in \eqref{EQ:Func_moll_com_model_reg_0}, \eqref{EQ:Func_moll_com_model_reg_7} and \eqref{EQ:Func_moll_com_model_reg_9} we obtain the estimate stated in \eqref{EQLE:Func_moll_com_model_reg}. By combining the estimate in \eqref{EQLE:Func_moll_com_model_reg} with Lemma~\ref{LE:assump_est_func_loc} we can obtain the estimate \eqref{LEEQ:Func_moll_com_model_reg}. This concludes the proof.
\end{proof}
\begin{lemma}\label{LE:trace_com_model_reg}
Let $\mathcal{H}_{\hbar,\mu}$ be an operator acting in $L^2(\R^d)$. Suppose  $\mathcal{H}_{\hbar,\mu}$ satisfies Assumption~\ref{Assumption:local_potential_1} with the open set $\Omega$ and let   $H_{\hbar,\mu,\varepsilon} = (-i\hbar\nabla-\mu a)^2 +V_\varepsilon $ be the associated rough Schr\"odinger operator.  Assume that $\mu\leq\mu_0<1$ and $\hbar\in(0,\hbar_0]$, with $\hbar_0$ sufficiently small.
Moreover, suppose there exists some $c>0$ such that 
\begin{equation*}
	|V(x)| +\hbar^{\frac{2}{3}} \geq c \qquad\text{for all $x\in \Omega$},
\end{equation*}
and let $\varphi\in C_0^\infty(B(0,R))$. Then for $g\in C^{\infty,\gamma}(\R)$ with $\gamma\in[0,1]$ it holds that
\begin{equation}
	\Big|\Tr[\varphi g(\mathcal{H}_{\hbar,\mu})] -\Tr[\varphi g(H_{\hbar,\mu,\varepsilon})] \Big|  \leq C \hbar^{1+\gamma-d} + C' \varepsilon^{5+\kappa} \hbar^{-d-2}.
\end{equation}
The constants $C$ and $C'$ depends on $\supp(f)$, the dimension and the numbers $\norm{f}_{L^\infty(\R)}$, $\norm{\partial^\alpha\varphi}_{L^\infty(\R)}$ for all $\alpha\in\N^d_0$, $\norm{\partial^\alpha a_j}_{L^\infty(\R^d)}$ for all $\alpha\in\N_0^d$ with $|\alpha|\geq1$ and $j\in\{1\dots,d\}$ and $\norm{\partial_x^\alpha V}_{L^\infty(\R^d)}$ for all $\alpha\in\N_0^d$ such that $|\alpha|\leq 5$ and the H\"older constant for $V$.
\end{lemma}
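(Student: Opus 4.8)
The plan is to split the difference $\Tr[\varphi g(\mathcal{H}_{\hbar,\mu})] - \Tr[\varphi g(H_{\hbar,\mu,\varepsilon})]$ using a smooth cutoff of the spectral parameter, exactly as in the proof of Theorem~\ref{THM:Loc_rough_prob}. First I would pick $f_1, f_2 \in C_0^\infty(\R)$ with $\supp(f_2)$ small enough that the non-criticality hypothesis $|V(x)| + \hbar^{2/3} \geq c$ on $\Omega$ propagates to $|\nu - V(x)| + \hbar^{2/3} \geq c/2$ for $\nu \in \supp(f_2^2)$, and with the decomposition $g = f_1 + f_2^2 g$ valid near the spectra of both operators (using lower semiboundedness). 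This reduces the claim to estimating separately $\Tr[\varphi(f_1(\mathcal{H}_{\hbar,\mu}) - f_1(H_{\hbar,\mu,\varepsilon}))]$ and $\Tr[\varphi(f_2^2 g(\mathcal{H}_{\hbar,\mu}) - f_2^2 g(H_{\hbar,\mu,\varepsilon}))]$.

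For the smooth part, $f_1 \in C_0^\infty(\R)$, Lemma~\ref{LE:Func_com_model_reg} directly gives $\norm{\varphi[f_1(\mathcal{H}_{\hbar,\mu}) - f_1(H_{\hbar,\mu,\varepsilon})]}_1 \leq C\varepsilon^{2+\kappa}\hbar^{-d}$, which is absorbed into the $C'\varepsilon^{2+\kappa}\hbar^{-d-1}$ term. For the singular part, I would follow the mollification argument: write $f_2^2 g = f_2^2 g^\hbar + f_2(g - g^\hbar)f_2$ at the operator level (inserting $f_2$'s around $g - g^\hbar$ is legitimate since $f_2 g = g$ on the relevant spectral window up to $C_N\hbar^N$, after localizing by a $\theta$ as in Lemma~\ref{LE:assump_est_func_loc}). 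The term $\varphi f_2(\mathcal{H}_{\hbar,\mu})[g(\mathcal{H}_{\hbar,\mu}) - g^\hbar(\mathcal{H}_{\hbar,\mu})]f_2(\mathcal{H}_{\hbar,\mu})\varphi$ and its rough-operator analogue are each handled by Proposition~\ref{PRO:Tauberian}: its hypothesis \eqref{EQ:PRO:Tauberian_1} is verified for $B = f_2(\mathcal{H}_{\hbar,\mu})\varphi$ by the bound \eqref{LEEQ:Func_moll_com_model_reg} of Lemma~\ref{LE:Func_moll_com_model_reg} (and for the rough operator by Lemma~\ref{LE:assump_est_func_loc}), yielding $\norm{\cdot}_1 \leq C\hbar^{1+\gamma-d}$ for each individually; their difference is then $\mathcal{O}(\hbar^{1+\gamma-d})$, contributing the first error term.

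What remains is the mollified trace $\Tr[\varphi f_2^2 g^\hbar(\mathcal{H}_{\hbar,\mu})] - \Tr[\varphi f_2^2 g^\hbar(H_{\hbar,\mu,\varepsilon})]$. Writing $g^\hbar(t) = \int_\R g(s)\chi_\hbar(t-s)\,ds$, this equals $\int_\R g(s)\big(\Tr[\varphi f_2^2(\mathcal{H}_{\hbar,\mu})\chi_\hbar(\mathcal{H}_{\hbar,\mu}-s)] - \Tr[\varphi f_2^2(H_{\hbar,\mu,\varepsilon})\chi_\hbar(H_{\hbar,\mu,\varepsilon}-s)]\big)\,ds$; inserting $f_2$'s symmetrically (again up to $\mathcal{O}(\hbar^N)$) and applying the comparison estimate \eqref{EQLE:Func_moll_com_model_reg} of Lemma~\ref{LE:Func_moll_com_model_reg} bounds the integrand in trace norm by $C\varepsilon^{2+\kappa}\hbar^{-d-1}$, uniformly in $s$, and since $g$ is integrable over the bounded window where $\chi_\hbar(\cdot-s)$ matters this gives the $C'\varepsilon^{2+\kappa}\hbar^{-d-1}$ term. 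The main obstacle I anticipate is purely bookkeeping: justifying the symmetric insertion of $f_2$ around the non-smooth factors $g - g^\hbar$ and $\chi_\hbar$ at the level of trace-class operators (rather than just operator norm), and tracking that all the "up to $\hbar^N$" errors from the disjoint-support commutator estimates of Lemma~\ref{LE:disjoint_supp_PDO} and the functional-calculus localization genuinely come in trace norm — this is where one must be careful to always keep a compactly supported $\varphi$ or $\OpW(\theta)$ on the outside so that Lemma~\ref{LE:Func_cal_est_inf_pon}'s $\hbar^{-d}$ bound and Lemma~\ref{LE:disjoint_supp_PDO}'s trace-norm conclusion apply.
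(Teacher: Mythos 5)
Your plan follows the paper's proof essentially verbatim: mollify $g$ by $\chi_\hbar$, control the $g-g^{(\hbar)}$ contribution for each operator separately via Proposition~\ref{PRO:Tauberian} (whose hypothesis \eqref{EQ:PRO:Tauberian_1} is supplied by Lemma~\ref{LE:assump_est_func_loc} for the rough operator and by \eqref{LEEQ:Func_moll_com_model_reg} for $\mathcal{H}_{\hbar,\mu}$), and bound the difference of the mollified traces by integrating the comparison estimate \eqref{EQLE:Func_moll_com_model_reg} against $g(s)$ over its compact support. The only cosmetic difference is your preliminary splitting $g=f_1+f_2^2g$: the paper instead takes a single $f\in C_0^\infty(\R)$ with $fg=g$ and a spatial cutoff $\varphi_1$ with $\varphi\varphi_1=\varphi$, which folds your $f_1$-term into the same three estimates, so your version is correct and if anything slightly more careful about where the non-critical condition enters.
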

\begin{proof}
All quantities are Gauge invariant. Hence we can start by performing a Gauge transform such that the supremum norm of the magnetic vector potential is uniformly bounded.

Since both operators are lower semi-bounded we may assume that $g$ is compactly supported. Let $f\in C_0^\infty(\R)$ such that $f(t)g(t)= g(t)$ for all $t\in\R$. Let $\varphi_1\in C_0^\infty(\Omega)$ such that $\varphi(x)\varphi_1(x) = \varphi(x)$ for all $x\in\R^d$. Moreover, let $\chi_\hbar(t)$ be the function from Remark~\ref{RE:mollyfier_def} and  set $g^{(\hbar)}(t) = g*\chi_\hbar(t)$. With this notation set up, we have that
\begin{equation}\label{EQ:trace_com_model_reg_1}
	\begin{aligned}
	\MoveEqLeft \Big|\Tr[\varphi g(\mathcal{H}_{\hbar,\mu})] -\Tr[\varphi g(H_{\hbar,\mu,\varepsilon})] \Big|
	\\
	\leq {}& \lVert \varphi \varphi_1f(\mathcal{H}_{\hbar,\mu}) (g(\mathcal{H}_{\hbar,\mu}) - g^{(\hbar)}(\mathcal{H}_{\hbar,\mu}))f(\mathcal{H}_{\hbar,\mu})\varphi_1 \rVert_1
	\\
	&+\lVert \varphi \varphi_1 f(H_{\hbar,\mu,\varepsilon}) (g(H_{\hbar,\mu,\varepsilon})-g^{(\hbar)}(H_{\hbar,\mu,\varepsilon}))f(H_{\hbar,\mu,\varepsilon})\varphi_1 \rVert_1
	+ \norm{\varphi}_{L^\infty(\R^d)} \int_{\R} g_\gamma(s)  \,ds
	\\
	&\times  \sup_{s\in\R} \lVert\varphi \varphi_1 f(\mathcal{H}_{\hbar,\mu}) \chi_\hbar(\mathcal{H}_{\hbar,\mu}-s)f(\mathcal{H}_{\hbar,\mu})  \varphi_1 -\varphi_1 f(H_{\hbar,\mu,\varepsilon}) \chi_\hbar(H_{\hbar,\mu,\varepsilon}-s)f(H_{\hbar,\mu,\varepsilon})  \varphi_1\rVert_1.
	\end{aligned}
\end{equation}
Lemma~\ref{LE:assump_est_func_loc} and Lemma~\ref{LE:Func_moll_com_model_reg} gives us that the assumptions of Proposition~\ref{PRO:Tauberian} is fulfilled with $B$ equal to $\varphi_1f(\mathcal{H}_{\hbar,\mu})$ and $\varphi_1 f(H_{\hbar,\mu,\varepsilon})$  respectively. Hence we have that 
\begin{equation}\label{EQ:trace_com_model_reg_2}
	\begin{aligned}
	 \lVert \varphi \varphi_1f(\mathcal{H}_{\hbar,\mu}) (g(\mathcal{H}_{\hbar,\mu}) - g^{(\hbar)}(\mathcal{H}_{\hbar,\mu}))f(\mathcal{H}_{\hbar,\mu})\varphi_1 \rVert_1
	\leq C \hbar^{1+\gamma-d}
	\end{aligned}
\end{equation}
and
\begin{equation}\label{EQ:trace_com_model_reg_3}
	\begin{aligned}
	\lVert \varphi \varphi_1 f(H_{\hbar,\mu,\varepsilon}) (g(H_{\hbar,\mu,\varepsilon})-g^{(\hbar)}(H_{\hbar,\mu,\varepsilon}))f(H_{\hbar,\mu,\varepsilon})\varphi_1] \rVert_1 \leq C \hbar^{1+\gamma-d}.
	\end{aligned}
\end{equation}
From applying Lemma~\ref{LE:Func_moll_com_model_reg} we get that
\begin{equation}\label{EQ:trace_com_model_reg_4}
	\begin{aligned}
	\MoveEqLeft  \sup_{s\in\R} \lVert\varphi \varphi_1 f(\mathcal{H}_{\hbar,\mu}) \chi_\hbar(\mathcal{H}_{\hbar,\mu}-s)f(\mathcal{H}_{\hbar,\mu})  \varphi_1 -\varphi_1 f(H_{\hbar,\mu,\varepsilon}) \chi_\hbar(H_{\hbar,\mu,\varepsilon}-s)f(H_{\hbar,\mu,\varepsilon})  \varphi_1\rVert_1
	 \\
	 &\leq C\varepsilon^{5+\kappa} \hbar^{-d-2} + C'\hbar^{1+\gamma-d}.
	\end{aligned}
\end{equation}
Finally from combining the estimates in \eqref{EQ:trace_com_model_reg_1}, \eqref{EQ:trace_com_model_reg_2}, \eqref{EQ:trace_com_model_reg_3} and \eqref{EQ:trace_com_model_reg_4}we obtain the desired estimate and this concludes the proof.
\end{proof}
\section{Local model problem}\label{sec:model_prob}
Before we state and prove our local model problem we will state a result on comparison of phase-space integrals that we will need later.
\begin{lemma}\label{LE:comparison_phase_space_int}
Suppose $\Omega\subset\R^d$ is an open set, $d\geq2$ and let $\varphi\in C_0^\infty(\Omega)$. Moreover, let $\varepsilon>0$, $\hbar\in(0,\hbar_0]$ and  $V,V_\varepsilon\in L^1_{loc}(\R^d)\cap C(\Omega)$. Suppose for some number $\tau\geq0$ that 
\begin{equation}\label{EQLE:comparison_phase_space_int}
	\norm{V-V_\varepsilon}_{L^\infty(\Omega)}\leq c\varepsilon^{\tau}.
\end{equation}
Then for $\gamma\in[0,1]$ and $\varepsilon$ sufficiently small it holds that
\begin{equation}\label{LEEQ:Loc_mod_prob_5}
	\begin{aligned}
	 \Big| \int_{\R^{2d}} [g_\gamma(p^2+V_\varepsilon(x))-g_\gamma(p^2+V(x))]\varphi(x) \,dx dp  \Big| 
	 \leq C\varepsilon^{\tau},
	  \end{aligned}
\end{equation}
where the constant $C$ depends on the dimension and the numbers $\gamma$ and $c$  in  \eqref{EQLE:comparison_phase_space_int}.
\end{lemma}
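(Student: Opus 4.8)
The plan is to perform the momentum integration first, reducing the statement to a pointwise Lipschitz estimate in the $x$-variable. For $W\in\R$ set $G_\gamma(W):=\int_{\R^d}g_\gamma(p^2+W)\,dp$. Since $g_\gamma\geq0$, since $\varphi$ is compactly supported, and since $V$ and $V_\varepsilon$ are continuous — hence bounded — on the compact set $\supp\varphi\subset\Omega$, the function $(x,p)\mapsto g_\gamma(p^2+V(x))\abs{\varphi(x)}$ is integrable on $\R^{2d}$, so Tonelli/Fubini gives
\[
  \int_{\R^{2d}}g_\gamma(p^2+V(x))\varphi(x)\,dx\,dp=\int_{\R^d}G_\gamma(V(x))\,\varphi(x)\,dx,
\]
and likewise with $V_\varepsilon$ in place of $V$. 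Thus it suffices to bound $\int_{\R^d}\big[G_\gamma(V_\varepsilon(x))-G_\gamma(V(x))\big]\varphi(x)\,dx$.

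Next I would compute $G_\gamma$ in closed form. Passing to polar coordinates in $p$ and using that $g_\gamma(r^2+W)$ vanishes unless $W<0$ and $r<\sqrt{-W}$, one gets $G_\gamma(W)=\omega_{d-1}\int_0^{\sqrt{(W)_-}}(-W-r^2)^\gamma r^{d-1}\,dr$, where $\omega_{d-1}$ is the area of $S^{d-1}$ and with the convention $(-W-r^2)^0=1$ when $\gamma=0$. The substitution $r=\sqrt{(W)_-}\,s$ then yields
\[
  G_\gamma(W)=c_{d,\gamma}\,(W)_-^{\gamma+d/2},\qquad c_{d,\gamma}:=\omega_{d-1}\int_0^1(1-s^2)^\gamma s^{d-1}\,ds,
\]
so that, up to a dimensional constant, $G_\gamma$ is the power $(W)_-^{\gamma+d/2}$.

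It remains to exploit the regularity of this power. Since $d\geq3$, the exponent $\gamma+d/2\geq 3/2>1$, so $t\mapsto t_-^{\gamma+d/2}$ is $C^1(\R)$ with derivative $-(\gamma+d/2)\,t_-^{\gamma+d/2-1}$, hence Lipschitz on each bounded interval. For $\varepsilon$ small enough that $\norm{V-V_\varepsilon}_{L^\infty(\Omega)}\leq1$, the values $V(x)$ and $V_\varepsilon(x)$ for $x\in\supp\varphi$ all lie in $[-M,M]$ with $M:=\norm{V}_{L^\infty(\supp\varphi)}+1$, whence
\[
  \big|G_\gamma(V_\varepsilon(x))-G_\gamma(V(x))\big|\leq c_{d,\gamma}(\gamma+d/2)M^{\gamma+d/2-1}\,|V_\varepsilon(x)-V(x)|\leq C\varepsilon^{k+\mu}
\]
for $x\in\supp\varphi$, by hypothesis \eqref{EQLE:comparison_phase_space_int}. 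Integrating against $\abs{\varphi}$ over $\supp\varphi$ gives \eqref{LEEQ:Loc_mod_prob_5} with a constant $C$ depending on $d$, $\gamma$, $c$, and (through $M$ and $\norm{\varphi}_{L^1}$) on $\varphi$ and $\norm{V}_{L^\infty(\supp\varphi)}$.

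I do not expect a genuine obstacle: the only two points that need care are the justification of Fubini/Tonelli, which is handled by the boundedness of $V$ and $V_\varepsilon$ on the compact set $\supp\varphi$, and the observation that for $d\geq3$ the exponent $\gamma+d/2$ strictly exceeds $1$ — precisely what upgrades the a priori only Hölder estimate for $t\mapsto t_-^{\gamma+d/2}$ into a true Lipschitz estimate and thereby preserves the full power $\varepsilon^{k+\mu}$ (for $d=1,2$ one would in general only obtain $\varepsilon^{(k+\mu)\min(1,\gamma+d/2)}$).
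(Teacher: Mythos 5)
Your proposal is correct and follows essentially the same route as the paper: integrate out $p$ to reduce the estimate to $L^{\mathrm{cl}}_{\gamma,d}\int \bigl[V_\varepsilon(x)_-^{\gamma+d/2}-V(x)_-^{\gamma+d/2}\bigr]\varphi(x)\,dx$, then use that for $d\geq3$ the exponent $\gamma+d/2$ exceeds $1$, so $t\mapsto t_-^{\gamma+d/2}$ is Lipschitz on the bounded range of the potentials. Your version is slightly more explicit (Fubini justification, closed form of the constant, and the remark on $d\leq2$), but the key mechanism is identical.
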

\begin{proof}
Firstly we observe that due to \eqref{EQLE:comparison_phase_space_int}  we have that 
\begin{equation}\label{EQ:comparison_phase_space_int_1}
	\sup_{x\in\Omega}\big|V(x)_{-}-V_\varepsilon(x)_{-}\big|\leq c\varepsilon^{\tau}.
\end{equation}
To compare the phase-space integrals we start by evaluating the integral in $p$. This yields
\begin{equation}\label{EQ:comparison_phase_space_int_2}
	\begin{aligned}
	\MoveEqLeft \int_{\R^{2d}} [g_\gamma(p^2+V_\varepsilon(x))-g_\gamma(p^2+V(x))]\varphi(x) \,dx dp 
	\\
	&= L_{\gamma,d}^{\mathrm{cl}} \int_{\R^d} \big[V_\varepsilon(x)_{-}^{\frac{d}{2}+\gamma} -  V(x)_{-}^{\frac{d}{2}+\gamma}\big] \varphi(x) \,dx,
	  \end{aligned}
\end{equation}
where the constant $L_{\gamma,d}^{\mathrm{cl}} $ is given by
\begin{equation*}
	\begin{aligned}
	L_{\gamma,d}^{\mathrm{cl}} = \frac{\Gamma(\gamma+1)}{(4\pi)^{\frac{d}{2}}\Gamma(\gamma+\frac{d}{2}+1)},
	\end{aligned}
\end{equation*}
where $\Gamma$ is the standard gamma function. Since both $V_\varepsilon(x)_{-}$ and $V(x)_{-}$ are bounded from below and $d\geq2$ we can use that the map $r\mapsto r^{\frac{d}{2}+\gamma}$ is uniformly Lipschitz continuous when restricted to a compact domain. This gives us that
\begin{equation}\label{EQ:comparison_phase_space_int_3}
	\begin{aligned}
	 \int_{\R^d} \big[V_\varepsilon(x)_{-}^{\frac{d}{2}+\gamma} -  V(x)_{-}^{\frac{d}{2}+\gamma}\big] \varphi(x) \,dx \leq C_\gamma \int_{\R^d} \big|V_\varepsilon(x)_{-} -  V(x)_{-}\big| \varphi(x) \,dx
	 \leq \tilde{C}_\gamma \varepsilon^{\tau},
	  \end{aligned}
\end{equation}
where we have used \eqref{EQ:comparison_phase_space_int_1} and that $\supp(\varphi)\subset \Omega$. From combining \eqref{EQ:comparison_phase_space_int_2} and \eqref{EQ:comparison_phase_space_int_3} we obtain the desired estimate and this concludes the proof.
\end{proof}
With this established, we can now state our model problem.
\begin{thm}\label{THM:Loc_mod_prob}
Let $\mathcal{H}_{\hbar,\mu}$ be an operator acting in $L^2(\R^d)$ with $d\geq2$ and let  $\gamma\in[0,1]$. Suppose  $\mathcal{H}_{\hbar,\mu}$ and $\gamma$ satisfies Assumption~\ref{Assumption:local_potential_1} with the open set $\Omega$ and let   $H_{\hbar,\mu} = (-i\hbar\nabla-\mu a)^2 +V$ be the associated Schr\"odinger operator. Assume that $\mu\leq\mu_0<1$ and $\hbar\in(0,\hbar_0]$, with $\hbar_0$ sufficiently small. Moreover, suppose there exists some $c>0$ such that 
\begin{equation*}
	|V(x)| +\hbar^{\frac{2}{3}} \geq c \qquad\text{for all $x\in\Omega$}.
\end{equation*}
Then for any $\varphi\in C_0^\infty(\Omega)$ it holds that
\begin{equation*}
	  \Big|\Tr[\varphi g_\gamma(\mathcal{H}_{\hbar,\mu})] - \frac{1}{(2\pi\hbar)^d} \int_{\R^{2d}}g_\gamma(p^2+V(x)) \varphi(x) \,dx dp \Big| \leq C \hbar^{1+\gamma-d},
\end{equation*}
where the constant $C$ is depending on the dimension, the numbers $\norm{\partial^\alpha\varphi}_{L^\infty(\R)}$ for all $\alpha\in\N^d_0$, $\norm{\partial^\alpha a_j}_{L^\infty(\R^d)}$ for all $\alpha\in\N_0^d$ with $|\alpha|\geq1$ and $j\in\{1\dots,d\}$ and $\norm{\partial_x^\alpha V}_{L^\infty(\R^d)}$ for all $\alpha\in\N_0^d$ such that $|\alpha|\leq 5$ and the H\"older constant for $V$.
\end{thm}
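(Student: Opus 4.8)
The plan is to replace $V$ by the smooth approximation $V_\varepsilon$ furnished by Lemma~\ref{LE:rough_potential_local}, so that $\mathcal{H}_{\hbar,\mu}$ is compared with the rough Schr\"odinger operator $H_{\hbar,\mu,\varepsilon}=(-i\hbar\nabla-\mu a)^2+V_\varepsilon$, for which both the asymptotics and the phase–space comparison are already available, and then to optimise the single free parameter $\varepsilon$.

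First I would apply Lemma~\ref{LE:rough_potential_local} to $V\in C_0^{2,\kappa}(\R^d)$ with $k=2$, obtaining for each $\varepsilon>0$ a potential $V_\varepsilon\in C_0^\infty(\R^d)$ with $|\partial_x^\alpha(V-V_\varepsilon)|\leq C_\alpha\varepsilon^{2+\kappa-|\alpha|}$ for $|\alpha|\leq2$ and $|\partial_x^\alpha V_\varepsilon|\leq C_\alpha\varepsilon^{2+\kappa-|\alpha|}$ for $|\alpha|>2$; in particular $V_\varepsilon$ is a rough potential of regularity $\tau\geq2$, and since the $a_j$ are in $C_0^\infty(\R^d)$ and $V_\varepsilon$ has compact support, $H_{\hbar,\mu,\varepsilon}$ satisfies Assumption~\ref{Assumption:Rough_schrodinger} with constants $C_\alpha$ that can be chosen uniformly in $\varepsilon$. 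The non-critical hypothesis $|V(x)|+\hbar^{\frac23}\geq c$ on $\Omega$ transfers, by the last part of Lemma~\ref{LE:rough_potential_local}, to $|V_\varepsilon(x)|+\hbar^{\frac23}\geq\tilde c$ on $\Omega$ for $\varepsilon$ small; and since then $V_\varepsilon$ is nonvanishing on $\Omega$ for $\hbar$ small, $\Omega\subset\supp(V_\varepsilon)$, so the hypotheses of Lemmas~\ref{LE:trace_com_model_reg} and \ref{LE:asymp_est_func_loc} are in force.

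Next I would split the quantity to be estimated by the triangle inequality into three pieces. For the first, $\bigl|\Tr[\varphi g_\gamma(\mathcal{H}_{\hbar,\mu})]-\Tr[\varphi g_\gamma(H_{\hbar,\mu,\varepsilon})]\bigr|\leq C\hbar^{1+\gamma-d}+C'\varepsilon^{2+\kappa}\hbar^{-d-1}$ by Lemma~\ref{LE:trace_com_model_reg} (applicable since $g_\gamma\in C^{\infty,\gamma}(\R)$). For the second, $\bigl|\Tr[\varphi g_\gamma(H_{\hbar,\mu,\varepsilon})]-(2\pi\hbar)^{-d}\int_{\R^{2d}}g_\gamma(p^2+V_\varepsilon(x))\varphi(x)\,dx\,dp\bigr|\leq C\hbar^{1+\gamma-d}$ by Lemma~\ref{LE:asymp_est_func_loc}. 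For the third, $(2\pi\hbar)^{-d}\bigl|\int_{\R^{2d}}[g_\gamma(p^2+V_\varepsilon(x))-g_\gamma(p^2+V(x))]\varphi(x)\,dx\,dp\bigr|\leq C\varepsilon^{2+\kappa}\hbar^{-d}$ by Lemma~\ref{LE:comparison_phase_space_int}, using $\norm{V-V_\varepsilon}_{L^\infty(\Omega)}\leq c\,\varepsilon^{2+\kappa}$. Since $\hbar\leq\hbar_0\leq1$ the third bound is dominated by the $\varepsilon$-term of the first, so altogether the error is at most $C\hbar^{1+\gamma-d}+C'\varepsilon^{2+\kappa}\hbar^{-d-1}$, with constants depending only on the quantities listed in the statement.

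Finally I would choose $\varepsilon=\hbar^{(2+\gamma)/(2+\kappa)}$. Because $\kappa>\gamma$ by Assumption~\ref{Assumption:local_potential_1}, the exponent $(2+\gamma)/(2+\kappa)$ lies in $(0,1)$: hence $\varepsilon\to0$ as $\hbar\to0$ (so the above applications of Lemmas~\ref{LE:rough_potential_local} and \ref{LE:comparison_phase_space_int} are legitimate for $\hbar$ small), and $\varepsilon\geq\hbar^{1-\delta}$ with $\delta=(\kappa-\gamma)/(2+\kappa)\in(0,1)$ (so the rough $\hbar$-pseudodifferential calculus of Section~\ref{sec:Rough_pseudo_diff_op} underlying Lemmas~\ref{LE:trace_com_model_reg} and \ref{LE:asymp_est_func_loc} applies to $H_{\hbar,\mu,\varepsilon}$). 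With this choice $\varepsilon^{2+\kappa}\hbar^{-d-1}=\hbar^{2+\gamma}\hbar^{-d-1}=\hbar^{1+\gamma-d}$, which yields the asserted bound. The main obstacle is precisely this balancing: $\varepsilon$ must be small enough that the approximation errors $\sim\varepsilon^{2+\kappa}\hbar^{-d-1}$ do not exceed the target $\hbar^{1+\gamma-d}$, yet large enough ($\varepsilon\geq\hbar^{1-\delta}$) for the rough calculus to be valid, and the compatibility of these two constraints is exactly where $\kappa>\gamma$ enters. The genuinely hard analytic input—the resolvent estimates of Lemma~\ref{LE:Resolvent_est_local}, the localisation comparing $\mathcal{H}_{\hbar,\mu}$ with its local model, the Tauberian step of Proposition~\ref{PRO:Tauberian}, and the propagator construction behind Theorem~\ref{THM:Expansion_of_trace}—has been carried out in the preceding sections, so at this stage the argument is a synthesis.
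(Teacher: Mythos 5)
Your proposal is correct and follows essentially the same route as the paper's own proof: the same three-term triangle-inequality decomposition using Lemmas~\ref{LE:trace_com_model_reg}, \ref{LE:asymp_est_func_loc} and \ref{LE:comparison_phase_space_int}, and the same choice of mollification scale, since $\hbar^{(2+\gamma)/(2+\kappa)}=\hbar^{1-\delta}$ with $\delta=(\kappa-\gamma)/(2+\kappa)$ is exactly the paper's choice. The only difference is cosmetic: you spell out the verification that $\Omega\subset\supp(V_\varepsilon)$ and that the hypotheses of the auxiliary lemmas transfer to $V_\varepsilon$, which the paper leaves more implicit.
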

\begin{proof}[Proof of Theorem~\ref{THM:Loc_mod_prob}]
Let $H_{\hbar,\mu,\varepsilon} = (-i\hbar\nabla-\mu a)^2 +V_\varepsilon $ be the rough Schr\"odinger operator associated to $\mathcal{H}_{\hbar,\mu}$. We have in the construction of $V_\varepsilon$ chosen $\varepsilon=\hbar^{1-\delta}$, where
\begin{equation}
	\delta=\frac{\kappa-\gamma}{5+\kappa}.
\end{equation}
Note that since we assume $2\geq \kappa>\gamma$, we have that $1>\delta>0$. With this choice of $\varepsilon$ and $\delta$ we have that
\begin{equation}\label{EQ:Loc_mod_prob_1}
	\varepsilon^{5+\kappa} = \hbar^{(1-\delta)(5+\kappa)} = \hbar^{5+\gamma}. 
\end{equation}
Moreover, since we have assumed a non-critical condition for  our original problem we get  that there exists a constant $\tilde{c}$ such that for all $\varepsilon$ sufficiently small it holds that
\begin{equation*}
	|V_\varepsilon(x)| +\hbar^{\frac{2}{3}} \geq \tilde{c} \qquad\text{for all $x\in\Omega$}.
\end{equation*}
With this in place, we have that
\begin{equation}\label{EQ:Loc_mod_prob_2}
	\begin{aligned}
	  \MoveEqLeft \Big|\Tr[\varphi g_\gamma(\mathcal{H}_{\hbar,\mu})] - \frac{1}{(2\pi\hbar)^d} \int_{\R^{2d}}g_\gamma(p^2+V(x)) \varphi(x) \,dx dp \Big| 
	  \\
	  \leq{}&  \Big|\Tr[\varphi g_\gamma(\mathcal{H}_{\hbar,\mu}) -\Tr[\varphi g_\gamma(H_{\hbar,\mu,\varepsilon})] \Big|
	  \\
	  &+  \Big|\Tr[\varphi g_\gamma(H_{\hbar,\mu,\varepsilon})] - \frac{1}{(2\pi\hbar)^d} \int_{\R^{2d}}g_\gamma(p^2+V_\varepsilon(x)) \varphi(x) \,dx dp \Big|
	  \\
	  &+  \Big|\frac{1}{(2\pi\hbar)^d} \int_{\R^{2d}} (g_\gamma(p^2+V_\varepsilon(x))- g_\gamma(p^2+V(x))) \varphi(x) \,dx dp  \Big|.
	  \end{aligned}
\end{equation}
We have by Lemma~\ref{LE:trace_com_model_reg} that
\begin{equation}\label{EQ:Loc_mod_prob_3}
	 \Big|\Tr[\varphi g_\gamma(\mathcal{H}_{\hbar,\mu}) -\Tr[\varphi g_\gamma(H_{\hbar,\mu,\varepsilon})] \Big|  \leq C \hbar^{1+\gamma-d} + C \varepsilon^{5+\kappa} \hbar^{-d-2} \leq \tilde{C} \hbar^{1+\gamma-d},
\end{equation}
where we in the last equality have used \eqref{EQ:Loc_mod_prob_1}. From Lemma~\ref{LE:asymp_est_func_loc} we get that
\begin{equation}\label{EQ:Loc_mod_prob_4}
	\begin{aligned}
	 \Big|\Tr[\varphi g_\gamma(H_{\hbar,\mu,\varepsilon})] - \frac{1}{(2\pi\hbar)^d} \int_{\R^{2d}}g_\gamma(p^2+V_\varepsilon(x)) \varphi(x) \,dx dp \Big| \leq C\hbar^{1+\gamma-d}.
	  \end{aligned}
\end{equation}
To estimate the last contribution in \eqref{EQ:Loc_mod_prob_2} we first notice that by the construction of $V_\varepsilon$ we have that 
\begin{equation*}
	\sup_{x\in\Omega}\big|V(x)_{-}-V_\varepsilon(x)_{-}\big|\leq C\varepsilon^{5+\kappa} = C \hbar^{5+\gamma}.
\end{equation*}
Hence it follow from Lemma~\ref{LE:comparison_phase_space_int} that 
\begin{equation}\label{EQ:Loc_mod_prob_9}
	\begin{aligned}
	  \Big|\frac{1}{(2\pi\hbar)^d} \int_{\R^{2d}} (g_\gamma(p^2+V_\varepsilon(x))- g_\gamma(p^2+V(x))) \varphi(x) \,dx dp  \Big| \leq C\hbar^{5+\gamma-d}.
	  \end{aligned}
\end{equation}
Finally by combining \eqref{EQ:Loc_mod_prob_2}, \eqref{EQ:Loc_mod_prob_3}, \eqref{EQ:Loc_mod_prob_4} and \eqref{EQ:Loc_mod_prob_9} we obtain the desired estimate and this concludes the proof.
\end{proof}

\section{Proof of Theorem~\ref{THM:Local_five_derivative}}\label{sec:proof_main}
This section is devoted to the proof of Theorem~\ref{THM:Local_five_derivative}. The proof is based on the multi-scale techniques of
\cite{MR1343781} (see also \cite{MR1631419,MR1240575}). Before we start the proof we will recall the following Lemma from  \cite{MR1343781} where it is Lemma 5.4.
\begin{lemma}\label{LE:partition_lemma}
  Let $\Omega\subset\R^d$ be an open set and let $l$ be a function in
  $C^1(\bar{\Omega})$ such that $l>0$ on $\bar{\Omega}$ and assume
  that there exists $\rho$ in $(0,1)$ such that
  \begin{equation}
    \abs{\nabla_x l(x)} \leq \rho,
  \end{equation}
  for all $x$ in $\Omega$. 
  
  Then
  \begin{enumerate}[label=$\roman*)$]
	
  \item There exists a sequence $\{x_k\}_{k=0}^\infty$ in $\Omega$
    such that the open balls $B(x_k,l(x_k))$ form a covering of
    $\Omega$. Furthermore, there exists a constant $N_\rho$, depending only
    on the constant $\rho$, such that the intersection of more than
    $N_\rho$ balls are empty.
		
  \item One can choose a sequence $\{\varphi_k\}_{k=0}^\infty$ such
    that $\varphi_k \in C_0^\infty(B(x_k,l(x_k)))$ for all $k$ in
    $\N$. Moreover, for all multiindices $\alpha$ and all $k$ in $\N$
    \begin{equation*}
      \abs{\partial_x^\alpha \varphi_k(x)}\leq C_\alpha l(x_k)^{-{\abs{\alpha}}},
    \end{equation*} 	   
    and
    \begin{equation*}
      \sum_{k=1}^\infty \varphi_k(x) = 1,
    \end{equation*}
    for all $x$ in $\Omega$.
  \end{enumerate}
\end{lemma}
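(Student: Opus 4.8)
The plan is to convert the pointwise bound $\abs{\nabla_x l}\le\rho<1$ into a slowly varying property of $l$, then run a maximal disjoint-ball argument to produce the covering, and finally read off the bounded overlap and the partition of unity from it.

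First I would record the slowly varying estimate. Integrating $\nabla_x l$ along segments — after, if necessary, replacing $l$ by a $\rho$-Lipschitz extension to $\R^d$, which leaves $l$ positive near $\bar\Omega$ and unchanged on $\bar\Omega$ — gives $\abs{l(x)-l(y)}\le\rho\abs{x-y}$; consequently there are constants $0<c_\rho\le1\le C_\rho$ depending only on $\rho$ (namely $1-\rho$ and $1+\rho$) such that $\abs{x-y}\le l(x)$ forces $c_\rho\,l(x)\le l(y)\le C_\rho\,l(x)$.

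Next the covering. Fix $\theta=\tfrac14$ and choose, by Zorn's lemma or by exhausting $\Omega$ by compacts (on each of which $l$ is bounded below, so only finitely many members of the family can have their centre there), a maximal family $\{x_k\}_{k\ge1}\subset\Omega$ for which the balls $B(x_k,\theta\,l(x_k))$ are pairwise disjoint. For $x\in\Omega$, maximality yields $k$ with $B(x,\theta\,l(x))\cap B(x_k,\theta\,l(x_k))\ne\emptyset$, i.e. $\abs{x-x_k}\le\theta(l(x)+l(x_k))$; feeding in $l(x)\le l(x_k)+\rho\abs{x-x_k}$ and solving gives $\abs{x-x_k}\le\frac{2\theta}{1-\theta\rho}\,l(x_k)\le\frac{2\theta}{1-\theta}\,l(x_k)=\tfrac23\,l(x_k)$, so in fact the balls $B(x_k,\tfrac23\,l(x_k))$ already cover $\Omega$, which is the covering part of $(i)$. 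For the overlap bound, if $x$ lies in $M$ of the balls $B(x_{k_j},l(x_{k_j}))$, then by the slowly varying estimate all the radii $l(x_{k_j})$ are comparable to $l(x)$ with $\rho$-dependent constants, so the pairwise disjoint balls $B(x_{k_j},\theta\,l(x_{k_j}))$ each have radius $\ge\theta c_\rho\,l(x)$ and all lie inside $B(x,(C_\rho+\theta c_\rho)\,l(x))$; comparing Lebesgue measures gives $M\le N_\rho:=\big(\tfrac{C_\rho+\theta c_\rho}{\theta c_\rho}\big)^{d}$, depending only on $\rho$.

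The partition of unity is then the usual normalization. Take $\chi\in C_0^\infty(B(0,1))$ with $0\le\chi\le1$ and $\chi\equiv1$ on $B(0,\tfrac23)$, set $\tilde\varphi_k(x)=\chi\big(\tfrac{x-x_k}{l(x_k)}\big)\in C_0^\infty(B(x_k,l(x_k)))$, so that the chain rule gives $\abs{\partial_x^\alpha\tilde\varphi_k}\le C_\alpha\,l(x_k)^{-\abs{\alpha}}$, and put $\varphi_k=\tilde\varphi_k/S$ with $S=\sum_j\tilde\varphi_j$. By the overlap bound $S$ is a locally finite smooth function with $1\le S\le N_\rho$ on a neighbourhood of $\bar\Omega$ (the lower bound because the $B(x_k,\tfrac23 l(x_k))$ already cover), and, again using the overlap bound and the slowly varying estimate $l(x)\asymp l(x_k)$ on $\supp(\tilde\varphi_k)$, $\abs{\partial_x^\alpha S(x)}\le C_\alpha\,l(x)^{-\abs{\alpha}}$ there; Leibniz's rule applied to $\tilde\varphi_k\cdot S^{-1}$ then delivers $\abs{\partial_x^\alpha\varphi_k}\le C_\alpha\,l(x_k)^{-\abs{\alpha}}$ and $\sum_k\varphi_k\equiv1$ on $\Omega$, which is $(ii)$. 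I expect the only real subtlety to be the slowly varying step and making sure its constants — together with the overlap constant $N_\rho$ and the geometric slack between radius $\tfrac23$ and radius $1$ — depend on $\rho$ only; the one geometric nuisance, that segments must stay in the region where the gradient bound holds, is exactly what the Lipschitz extension of $l$ removes.
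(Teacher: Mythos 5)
Your argument is correct and is precisely the standard slowly-varying-metric construction (a maximal family of disjoint balls $B(x_k,\theta l(x_k))$, the gradient bound to make $l$ comparable on intersecting balls, a volume comparison for the overlap bound $N_\rho$, and normalized bump functions for the partition of unity) that the paper itself does not reproduce but delegates to H\"ormander's Theorem~1.4.10 and to Lemma~5.4 of \cite{MR1343781}. The one step to watch is the Lipschitz-extension claim: for a non-convex $\Omega$ the restriction of $l$ to $\bar{\Omega}$ need not be $\rho$-Lipschitz for the Euclidean distance, so a $\rho$-Lipschitz extension is not automatic; in the paper's applications, however, $l$ is defined on all of $\R^d$ with $\abs{\nabla l}\leq\rho$ everywhere, so the segment argument (and likewise the lower bound on $S$ over the full supports of the $\tilde{\varphi}_k$) goes through directly.
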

The proof of the Lemma is analogous to the proof of \cite[Theorem 1.4.10]{MR1996773}.
Before we give a proof of Theorem~\ref{THM:Local_five_derivative} we will prove the following theorem, where we have an additional assumption on the magnetic field compared to Theorem~\ref{THM:Local_five_derivative}.
\begin{thm}\label{THM:Local_five_derivative_almost_there}
Let $\mathcal{H}_{\hbar,\mu}$ be an operator acting in $L^2(\R^d)$ and let $\gamma\in[0,1]$. If $\gamma=0$ we assume $d\geq3$ and if $\gamma\in(0,1]$ we assume $d\geq4$. Suppose that $\mathcal{H}_{\hbar,\mu}$ and $\gamma$ satisfy Assumption~\ref{Assumption:local_potential_1} with the set $\Omega$ and the functions $V$ and $a_j$ for $j\in\{1,\dots,d\}$. 
Then for any $\varphi\in C_0^\infty(\Omega)$ it holds that
\begin{equation*}
	  \Big|\Tr[\varphi g_\gamma(\mathcal{H}_{\hbar,\mu})] - \frac{1}{(2\pi\hbar)^d} \int_{\R^{2d}}g_\gamma(p^2+V(x)) \varphi(x) \,dx dp \Big| \leq C \hbar^{1+\gamma-d}
\end{equation*}
for all $\hbar\in(0,\hbar_0]$ and $\mu\leq \mu_0<1$, $\hbar_0$ sufficiently small. The constant $C$ is depending on the dimension, the numbers $\norm{\partial^\alpha \varphi}_{L^\infty(\R^d)}$ and  $\norm{\partial^\alpha a_j}_{L^\infty(\R^d)}$ for all $\alpha\in\N_0^d$ and $j\in\{1\dots,d\}$ and $\norm{\partial_x^\alpha V}_{L^\infty(\R^d)}$ for all $\alpha\in\N_0^d$ such that $|\alpha|\leq 5$ and the H\"older constant for $V$.
\end{thm}
\begin{proof}
Since $\varphi\in C_0^\infty(\Omega)$  there exists a number $\epsilon>0$ such that
\begin{equation*}
	\dist(\supp(\varphi), \Omega^{c}) >\epsilon.
\end{equation*}
We need this number to ensure we stay in the region where $\mathcal{H}_{
\hbar,\mu}$ behaves as a magnetic Schr\"odinger operator. We let 
\begin{equation*}
	l(x) = A^{-1}\sqrt{ |V(x)|^2 + \hbar^\frac{4}{3}} \quad\text{and}\quad f(x)=\sqrt{l(x)}. 
\end{equation*} 
Where we choose $A >0$ sufficiently large such that
\begin{equation}\label{EQ:Global_five_derivative_1}
	l(x) \leq  \min\big(\tfrac{\epsilon}{11},1\big)  \quad\text{and}\quad |\nabla l(x)|\leq \rho <\frac{1}{8}
\end{equation} 
for all $x\in\overline{\supp(\varphi)}$. Note that we can choose $A$ independent of $\hbar$ and uniformly for $\hbar\in(0,\hbar_0]$. Moreover, we have that
\begin{equation}\label{EQ:Global_five_derivative_2}
	|V(x)| \leq A l(x). 
\end{equation}
We use Lemma~\ref{LE:partition_lemma} with the set $\supp(\varphi)$ and the function $l(x)$. We can do this since we have that $l>0$ due to the presence of $\hbar$ in the definition of $l$. By Lemma~\ref{LE:partition_lemma} with the set $\supp(\varphi)$ and the function $l(x)$ there exists a sequence $\{x_k\}_{k=1}^\infty$ in $\supp(\varphi)$ such that $\supp(\varphi) \subset \cup_{k\in\N} B(x_k,l(x_k))$ and there exists a constant $N_{\frac{1}{8}}$ such that at most $N_{\frac{1}{8}}$ of the sets $B(x_k,l(x_k))$ can have a non-empty intersection. Moreover there exists a sequence $\{\varphi_{k}\}_{k=1}^\infty$ such that $\varphi_k\in C_0^\infty(B(x_k,l(x_k))$,
\begin{equation}\label{EQ:Global_five_derivative_2.5}
	\big| \partial_x^\alpha \varphi_k(x) \big| \leq C_\alpha l(x_k)^{-|\alpha|} \qquad\text{for all $\alpha\in\N_0$},
\end{equation}
and
\begin{equation*}
	\sum_{k=1}^\infty \varphi_k(x) =1  \qquad\text{for all $x\in\supp(\varphi)$}.
\end{equation*}
We have that $ \cup_{k\in\N} B(x_k,l(x_k))$ is an open covering of $\supp(\varphi)$ and since this set is compact there exists a finite subset $\mathcal{I}'\subset \N$ such that  
\begin{equation*}
	\supp(\varphi) \subset \bigcup_{k\in\mathcal{I}'} B(x_k,l(x_k)).
\end{equation*}
To ensure that we have a finite partition of unity over the set $\supp(\varphi)$ we define the set
\begin{equation*}
	\mathcal{I} = \bigcup_{j\in\mathcal I'} \Big\{ k\in\N \,\big|\, B(x_k,l(x_k))\cap B(x_j,l(x_j)) \neq \emptyset \Big\}.
\end{equation*}
We have that $\mathcal{I} $ is still finite since at most $N_{\frac{1}{8}}$ balls can have a non-empty intersection. Moreover, we have that
\begin{equation*}
	\sum_{k\in\mathcal{I}} \varphi_k(x) =1  \qquad\text{for all $x\in\supp(\varphi)$}.
\end{equation*}
From this, we get the following identity
\begin{equation}\label{EQ:Rough_weyl_asymptotics_2.6}
	 \Tr[\varphi \boldsymbol{1}_{ (-\infty,0]}(H_{\hbar,\varepsilon})] = \sum_{k\in\mathcal{I}}  \Tr[\varphi_k \varphi \boldsymbol{1}_{ (-\infty,0]}(H_{\hbar,\varepsilon})] ,
\end{equation}
where we have used the linearity of the trace. In what follows we will use the following notation 
\begin{equation*}
	l_k=l(x_k), \quad f_k=f(x_k), \quad h_k = \frac{\hbar}{l_kf_k} \quad\text{and}\quad \mu_k = \frac{\mu l_k}{f_k}.
\end{equation*}
We have that $h_k$ is uniformly bounded from above since
\begin{equation*}
	l(x)f(x) = A^{-\frac32}(\abs{V_\varepsilon(x)}^2+\hbar^{\frac43})^{\frac34} \geq A^{-\frac32} \hbar, 
\end{equation*}
for all $x\in\R^d$. Moreover, due to our choice of $f$ and $l$ we have that $\mu_k$ is bounded from above by $\mu_0$ since for all $x\in\R^d$ we have that
\begin{equation}\label{EQ:Rough_weyl_asymptotics_3}
	\frac{ l(x)}{f(x)}  = \sqrt{l(x)} \leq 1.
\end{equation}
 We define the two unitary operators $U_l$ and $T_z$ by
\begin{equation*}
	U_l f(x) = l^{\frac{d}{2}} f( l x) \quad\text{and}\quad T_zf(x)=f(x+z) \qquad\text{for $f\in L^2(\R^d)$}.
\end{equation*}
Moreover, we set
\begin{equation*}
	\begin{aligned}
	\tilde{\mathcal{H}}_{\hbar_k,\mu_k} = f_k^{-2} (T_{x_k} U_{l_k}) \mathcal{H}_{\hbar,\mu} (T_{x_k} U_{l_k})^{*}.
	\end{aligned}
\end{equation*}
Since we have that $\mathcal{H}_{\hbar,\mu}$ satisfies Assumption~\ref{Assumption:local_potential_1} with the open set $\Omega$ and the functions $V$ and $a_j$ for all $j\in\{1,\dots,d\}$. We have that $\tilde{\mathcal{H}}_{h_k,\mu_k}$ will satisfies Assumption~\ref{Assumption:local_potential_1} with the open set $B(0,10)$ and the functions $\tilde{V}_k$ and $\tilde{a}_{l,k}$ for all $j\in\{1,\dots,d\}$, where
\begin{equation}
	\tilde{V}_k(x)=f_k^{-2} V(l_kx+x_k) \quad\text{and}\quad  \tilde{a}_{l,k}(x) =  l_k^{-1} a_j(l_kx+x_k) \quad\text{for all $j\in\{1,\dots,d\}$}.
\end{equation}
We will here need to establish that this rescaled operator satisfies the assumptions of Theorem~\ref{THM:Loc_mod_prob} with the parameters $h_k$ and $\mu_k$ and the set $B(0,8)$. Since we have that $h_k$ is bounded from above and $\mu_k\leq\mu_0$ as established above what remains is to verify that we have a non-critical condition.
To establish this we firstly observe that  by \eqref{EQ:Global_five_derivative_1} we have that
\begin{equation}\label{EQ:Global_five_derivative_3.5}
	(1-8\rho) l_k \leq l(x) \leq (1+8\rho) l_k \qquad\text{for all $x \in B(x_k,8l_k)$}.
\end{equation}
Using \eqref{EQ:Global_five_derivative_3.5} we have for for $x$ in $B(0,8)$ that
\begin{equation*}
	\begin{aligned}
	\abs{\tilde{V}_k(x)} + h_k^{\frac{2}{3}} &= f_k^{-2} \abs{V(l_kx+x_k)} + (\tfrac{\hbar}{f_k l_k})^{\frac{2}{3}}
	=l_k^{-1}( \abs{V(l_kx+x_k)} +\hbar^{\frac23})
	\\
	&\geq  l_k^{-1} A l(l_k x+x_k) \geq (1-8\rho) A.
	\end{aligned}
\end{equation*}
Hence we have a non-critical assumption for all $x\in B(0,8)$. What remains is to verify that the norms of the functions $\widetilde{\varphi_k\varphi}=(T_{x_k} U_{l_k})\varphi_k\varphi(T_{x_k} U_{l_k})^{*}$, $\tilde{V}_k$ and $\tilde{a}_{l,k}$ for all $j\in\{1,\dots,d\}$ are independent of $\hbar$ and $k$.
Due to  \eqref{EQ:Global_five_derivative_2} and that $l$ is slowly varying \eqref{EQ:Global_five_derivative_3.5} we have that
\begin{equation*}
	\norm{\tilde{V}_k}_{L^\infty(B(0,8))} = \sup_{x\in B(0,8)} \big| f_k^{-2} V(l_kx+x_k) \big| \leq A.
\end{equation*}
For $\alpha\in\N^d_0$ with $1\leq|\alpha|\leq5$ we have that
\begin{equation*}
	\begin{aligned}
    	 \norm{\partial_x^\alpha \tilde{V}_k(x) }_{L^\infty(B(0,8))} = f_k^{-2} l_k^{|\alpha|} \sup_{x\in B(0,8)}  |(\partial_x^\alpha V)(l_kx+x_k)|  \leq   \norm{\partial_x^\alpha V(x) }_{L^\infty(\R^d)}.
	\end{aligned}
\end{equation*}
Let $C_V$ be the H\"older constant for $V$. We then have for $\alpha\in\N^d_0$ with $|\alpha|=5$ that
\begin{equation*}
	\begin{aligned}
    	 \big| \partial_x^\alpha \tilde{V}_k(x) - \partial_x^\alpha \tilde{V}_k(y) \big| = f_k^{-2} l_k^{5}  \big|(\partial_x^\alpha V)(l_kx+x_k) - (\partial_x^\alpha V)(l_k y+x_k) \big|  \leq C_V |x-y|.
	\end{aligned}
\end{equation*}
For $\alpha\in\N^d_0$ with $|\alpha|\geq1$ we have that
\begin{equation*}
	\begin{aligned}
    	 \norm{\partial_x^\alpha \tilde{a}_{l,k}(x) }_{L^\infty(B(0,8))} =  l_k^{|\alpha|-1} \sup_{x\in B(0,8)}  |(\partial_x^\alpha a_j)(l_kx+x_k)|  \leq   \norm{\partial_x^\alpha a_j(x) }_{L^\infty(\R^d)},
	\end{aligned}
\end{equation*}
for all $j\in\{1,\dots,d\}$. Both bounds are independent of $k$ and $\hbar$. 
The last numbers we check are the numbers $\norm{\partial_x^\alpha \widetilde{\varphi_k\varphi}}_{L^\infty(\R^d)}$ for all $\alpha\in\N_0^d$. Here we have by construction of $\varphi_k$ (\eqref{EQ:Global_five_derivative_2.5}) for all $\alpha\in\N_0^d$ that
\begin{equation*}
	\begin{aligned}
   	\norm{\partial_x^\alpha \widetilde{\varphi_k\varphi}}_{L^\infty(\R^d)}
   	&=\sup_{x\in\R^d} \abs{l_k^{\abs{\alpha}} \sum_{\beta\leq \alpha} {\binom{\alpha}{\beta}} (\partial_x^{\beta}\varphi_k)(l_k x+x_k) (\partial_x^{\alpha - \beta}\varphi)(l_kx+x_k) }
    	\\
    	&\leq C_\alpha \sup_{x\in\R^d} \sum_{\beta\leq \alpha} {\binom{\alpha}{\beta}} l_k^{\abs{\alpha-\beta} }\abs{(\partial_x^{\alpha - \beta}  \varphi)(l_kx+x_k) } \leq \widetilde{C}_\alpha.
      \end{aligned}
  \end{equation*}
With this we have established that all numbers the constant from Theorem~\ref{THM:Loc_mod_prob} depend on are independent of $\hbar$ and $k$. From applying Theorem~\ref{THM:Loc_mod_prob} we get that
\begin{equation}\label{EQ:Global_five_derivative_6}
	\begin{aligned}
	\MoveEqLeft \big| \Tr[\varphi  g_\gamma (\mathcal{H}_{\hbar,\mu})  ] - \frac{1}{(2\pi\hbar)^d} \int_{\R^{2d}}g_\gamma( p^2+V(x))\varphi(x) \,dx dp \big|
	\\
	\leq {}& \sum_{k\in\mathcal{I}}\big|  \Tr[\varphi_k\varphi  g_\gamma (\mathcal{H}_{\hbar,\mu})  ] - \frac{1}{(2\pi\hbar)^d} \int_{\R^{2d}}g_\gamma( p^2+V(x))\varphi_k\varphi(x) \,dx dp \big|
	\\
	\leq  {} & \sum_{k\in\mathcal{I}} f_k^{2\gamma}  \big|\Tr[ g_\gamma(\mathcal{H}_{\hbar_k,\mu_k}) \widetilde{\varphi_k\varphi} ]
	 - \frac{1}{(2\pi h_k)^d} \int_{\R^{2d}} g_\gamma( p^2+\tilde{V}_k(x))\widetilde{\varphi_k\varphi}(x) \,dx dp \big|
	\\
	\leq {} & C \sum_{k\in\mathcal{I}} h_k^{1+\gamma-d}f_k^{2\gamma}.
	\end{aligned}
\end{equation}
When we consider the sum over the error terms we see that
\begin{equation}\label{EQ:Global_five_derivative_7}
	\begin{aligned}
	 \sum_{k\in\mathcal{I}} C h_k^{1+\gamma-d}f_k^{2\gamma} &=  \sum_{k\in\mathcal{I}} \tilde{C} \hbar^{1+\gamma-d} \int_{B(x_k,l_k)} l_k^{-d} f_k^{2\gamma}(l_kf_k)^{d-1-\gamma} \,dx
	\\
	& =  \sum_{k\in\mathcal{I}} \tilde{C} \hbar^{1+\gamma-d} \int_{B(x_k,l_k)} l_k^{\gamma-d} l_k^{\frac{3d-3-3\gamma}{2}} \,dx 
	\\
	&\leq \sum_{k\in\mathcal{I}} \hat{C} \hbar^{1+\gamma-d} \int_{B(x_k,l_k)} l(x)^{\frac{d -3 -\gamma}{2}}\,dx
	\leq C   \hbar^{1+\gamma-d} ,
	\end{aligned}
\end{equation}
where we have used the definition of $f_k$ and that $l$ is slowly varying. Moreover, we have also used our assumption on the dimension in the above estimate. In the last inequality, we have used that $\supp(\varphi)$ is compact. This ensures that the constant is finite. Combining the estimates in \eqref{EQ:Global_five_derivative_6} and \eqref{EQ:Global_five_derivative_7} we obtain the desired estimate. This concludes the proof.
\end{proof}
We are now ready to give a proof of our main Theorem. Most of the work has already been done in establishing Theorem~\ref{THM:Local_five_derivative_almost_there}. When comparing to Theorem~\ref{THM:Local_five_derivative_almost_there} what remains in establishing Theorem~\ref{THM:Local_five_derivative} is to allow $\mu\leq C\hbar^{-1}$ for some positive constant and not be bounded by $1$. The argument is identical to the argument used in \cite{MR1343781} to allow $\mu\leq C\hbar^{-1}$ for some positive constant and not be bounded by $1$. We have included it for the sake of completeness. 
\begin{proof}[Proof of Theorem~\ref{THM:Local_five_derivative}]
Since the Theorem has already been established for $\mu\leq\mu_0<1$ we can without loss of generality assume that $\mu\geq \mu_0$, where $\mu_0<1$. We will use the same scaling technique as we used in the proof of Theorem~\ref{THM:Local_five_derivative_almost_there}. Again we have a $\epsilon>0$ such that
\begin{equation*}
	\dist(\supp(\varphi), \Omega^{c}) >\epsilon
\end{equation*} 
since $\varphi\in C_0^\infty(\Omega)$. This time, however, we let 
\begin{equation*}
	l(x) = \min\big(1,\tfrac{\epsilon}{11}\big) \frac{ \mu_0}{ \mu} \quad\text{and}\quad f(x)=1. 
\end{equation*} 
We can again use Lemma~\ref{LE:partition_lemma} with $l$ from above to construct the partition of unity for $\supp(\varphi)$. After this, we do the rescaling as above with unitary conjugations. In this case, we get 
\begin{equation*}
	h_k = \frac{\hbar}{l_kf_k} \leq \hbar \mu \leq C  \quad\text{and}\quad \mu_k = \frac{\mu l_k}{f_k} =  \mu \min\big(1,\tfrac{\epsilon}{11}\big) \frac{ \mu_0}{ \mu} \leq \mu_0.
\end{equation*}
Moreover, we can analogously to above verify that all norm bounds are independent of $k$, $\mu$, and $\hbar$. So after rescaling, we have operators satisfying the assumptions of Theorem~\ref{THM:Local_five_derivative_almost_there} from applying this theorem we get analogous to the calculation in \eqref{EQ:Global_five_derivative_6} that
\begin{equation}\label{EQ:main_proof_1}
	\begin{aligned}
	 \big| \Tr[\varphi  g_\gamma (\mathcal{H}_{\hbar,\mu})  ] - \frac{1}{(2\pi\hbar)^d} \int_{\R^{2d}}g_\gamma( p^2+V(x))\varphi(x) \,dx dp \big|
	\leq  C \sum_{k\in\mathcal{I}} h_k^{1+\gamma-d}.
	\end{aligned}
\end{equation}
Since $\mathcal{I}$ is a finite set we have by our choice of the functions $l$ and $f$ that
\begin{equation}\label{EQ:main_proof_2}
	\begin{aligned}
	 \ \sum_{k\in\mathcal{I}} h_k^{1+\gamma-d} \leq C \mu^{1+\gamma} \hbar^{1+\gamma-d},
	\end{aligned}
\end{equation}
where $C$ depends on $\mu_0$, $\epsilon$ and the number of elements in $\mathcal{I}$. Combining the estimates in \eqref{EQ:main_proof_1} and \eqref{EQ:main_proof_2} we obtain that
\begin{equation}
	\begin{aligned}
	 \big| \Tr[\varphi  g_\gamma (\mathcal{H}_{\hbar,\mu})  ] - \frac{1}{(2\pi\hbar)^d} \int_{\R^{2d}}g_\gamma( p^2+V(x))\varphi(x) \,dx dp \big|
	\leq  C  \mu^{1+\gamma} \hbar^{1+\gamma-d}.
	\end{aligned}
\end{equation}
Recalling the results from Theorem~\ref{THM:Local_five_derivative_almost_there} we get for all $\mu\leq C\hbar^{-1}$ that
\begin{equation}
	\begin{aligned}
	 \big| \Tr[\varphi  g_\gamma (\mathcal{H}_{\hbar,\mu})  ] - \frac{1}{(2\pi\hbar)^d} \int_{\R^{2d}}g_\gamma( p^2+V(x))\varphi(x) \,dx dp \big|
	\leq  C \langle \mu \rangle^{1+\gamma} \hbar^{1+\gamma-d},
	\end{aligned}
\end{equation}
where $\langle \mu \rangle = (1+|\mu|^2)^{\frac{1}{2}}$. This concludes the proof.
\end{proof}

\begin{remark}\label{RE:proof_non_sharp_thm}
The first difference in proving Theorem~\ref{THM:Local_five_derivative} and Theorem~\ref{THM:Main_non_sharp} arises in the preliminary result Theorem~\ref{THM:Local_five_derivative_almost_there}. In the proof of Theorem~\ref{THM:Local_five_derivative_almost_there} the only place we have used our assumptions on the dimension is in equation \eqref{EQ:Global_five_derivative_7}. Here we used the assumption on the dimension to ensure that the number $\frac{d -3 -\gamma}{2}\geq0$. Now if $d=2$ or $d=3$ we have that $\frac{d -3 -\gamma}{2}\leq0$. Hence we can not just use that $l(x)$ is a bounded function, but we have to use that $l(x)\geq\hbar^{\frac{2}{3}}$. So instead of  \eqref{EQ:Global_five_derivative_7} we get the estimate
\begin{equation}
	\begin{aligned}
	 \sum_{k\in\mathcal{I}} C h_k^{1+\gamma-d}f_k^{2\gamma} &\leq \sum_{k\in\mathcal{I}} \hat{C} \hbar^{1+\gamma-d} \int_{B(x_k,l_k)} l(x)^{\frac{d -3 -\gamma}{2}}\,dx
	\leq C   \hbar^{\frac{2}{3}(\gamma-d)},
	\end{aligned}
\end{equation}
with the notation used in the proof of Theorem~\ref{THM:Local_five_derivative_almost_there}. After having established this inequality one obtains  a version of Theorem~\ref{THM:Local_five_derivative_almost_there}  with $d=2$ or $d=3$ with the estimate
\begin{equation*}
	  \Big|\Tr[\varphi g_\gamma(\mathcal{H}_{\hbar,\mu})] - \frac{1}{(2\pi\hbar)^d} \int_{\R^{2d}}g_\gamma(p^2+V(x)) \varphi(x) \,dx dp \Big| \leq C \hbar^{\frac{2}{3}(\gamma-d)}.
\end{equation*}   
The dependents of the constant are the same as stated in the theorem. What remains to establish Theorem~\ref{THM:Main_non_sharp} is to follow the proof of Theorem~\ref{THM:Local_five_derivative} with \eqref{EQ:main_proof_1} replaced by
\begin{equation}\label{EQ:non_sharp_proof_1}
	\begin{aligned}
	 \big| \Tr[\varphi  g_\gamma (\mathcal{H}_{\hbar,\mu})  ] - \frac{1}{(2\pi\hbar)^d} \int_{\R^{2d}}g_\gamma( p^2+V(x))\varphi(x) \,dx dp \big|
	\leq  C \sum_{k\in\mathcal{I}} h_k^{\frac{2}{3}(\gamma-d)},
	\end{aligned}
\end{equation}
where one have used the version of Theorem~\ref{THM:Local_five_derivative_almost_there} described above. Since we instead of \eqref{EQ:main_proof_1} have the estimate given in \eqref{EQ:non_sharp_proof_1} we obtain instead of \eqref{EQ:main_proof_2} the estimate
\begin{equation}
	\begin{aligned}
	 \ \sum_{k\in\mathcal{I}} h_k^{\frac{2}{3}(\gamma-d)} \leq C \mu^{\frac{d+2\gamma}{3}} \hbar^{\frac{2}{3}(\gamma-d)}.
	\end{aligned}
\end{equation}
With this estimate, the proof of Theorem~\ref{THM:Main_non_sharp} follows that of Theorem~\ref{THM:Local_five_derivative}.
\end{remark}

\bibliographystyle{plain} \bibliography{Bib_paperB.bib}
\end{document}